\newcommand{\cuerpo}[1]{\mathbb{#1}}
\newcommand{\Z}{\cuerpo{Z}}
\newcommand{\R}{\cuerpo{R}}
\newtheorem{Teo}{Theorem}[section]
\newtheorem{Pro}{Proposition}[section]
\newtheorem{Afir}{Affirmation}[section]
\newtheorem{Obs}{Observation}[section]
\newtheorem{Cor}[Teo]{Corolary}
\newtheorem{Exp}[Teo]{Example}
\newtheorem{Def}[Teo]{Definition}
\begin{document}
	
	\title{ Categorical Equivalence between $PMV_f$- product algebras and semi-low $f_u$-rings. }
	\author{Lilian J. Cruz \thanks{Universidad del Valle.} \\ lilian.cruz@correounivalle.edu.co \and Yuri A. Poveda \thanks{Universidad Tecnológica de Pereira.} \\ yapoveda@utp.edu.co}

	\maketitle              
	
	\begin{abstract}
		An explicit categorical equivalence is defined between a proper subvariety of the class of $PMV$-algebras, as  defined by Di Nola and Dvure$\check{c}$enskij, to be called $PMV_f$-algebras, and the category of semi-low $f_u$-rings. This categorical representation is done using the prime spectrum of the $MV$-algebras, through the equivalence between $MV$-algebras and $l_u$-groups established by Mundici, from the perspective of the Dubuc-Poveda approach, that extends the construction defined by Chang on chains. As a particular case, semi-low $f_u$-rings associated to Boolean algebras are characterized. Besides we show that class of $PMV_f$-algebras is coextensive.\\ 
		
		\textbf{Key words:} $PMV$-algebra, $PMV_f$-algebra,  $l_u$-ring, prime ideal, spectrum.     \\
	\end{abstract}

	\section{Introduction}
	In this paper the categorical equivalence is described,  between a classical universal algebra variety, subvariety of  the class of  $PMV$-algebras, the $PMV_f$-algebras  and the category of semi-low $f_u$-rings. This intermediate variety is a proper subvariety of the  $PMV$-algebras defined by Di Nola y Dvure$\check{c}$enskij \cite{DiNola2}. On the other hand, the variety of commutative unitary $PMV$-algebras studied by Montagna \cite{Montagna}, to be called in this paper $PMV_1$-algebras, is a proper subvariety of the  $PMV_f$. Estrada \cite{Estrada} defined the variety of $MVW$-rigs, and we defined the variety of $PMV_f$ from it. The $MVW$-rigs contains strictly the variety of $PMV$-algebras. Every $MV$-algebra, with the infimum as product, is an $MVW$-rig (Proposition \ref{proinfimo}), it can happen that it is not a $PMV$-algebra; for example, the $\L$ukasiewicz $MV$-algebras or the $MV$-algebra $[0,1]$. 
	
	The equivalence between the category of $PMV_f$-algebras  and the category of semi-low $f_u$-rings is established based of the equivalence proved by  Mundici \cite{Mundici3}, but applying the construction introduced by Dubuc-Poveda \cite{Yuri3}, since it does not require the \textit{good sequences}, and relies in the representation of any $MV$-algebra as a subdirect product of totally ordered $MV$-algebras, that will be called from here on chain $MV$-algebras or $MV$-chain. This representation only requires the prime spectrum of an $MV$-algebras and the equivalence between chain $MV$-algebras and the chain $l_u$-groups, established by Chang \cite{Chang2}.
	
	It is  proved that for the representation established in this paper, it is enough with the prime spectrum of the subjacent $MV$-algebra, since every $PMV_f$-algebra $A$ is a $PMV$-algebra that satisfies that $xy \leq x \wedge y$, y $x(y \ominus z)= xy \ominus xz$, for every  $x,y, z \in A$, and every prime ideal of the subjacent $MV$-algebra is an ideal of the $PMV_f$-algebra $A$.
	
	This construction finds explicit representations for the rings associated to notable examples of $PMV_f$-algebras. For example, the  $MV$-algebra $[0,1]$ with the usual product, the $MV$-algebra of the functions from $[0,1]^n$ to $[0,1]$ with the usual product, or the $PMV_f$-algebra of boolean algebras with product defined by the infimum. In this representation, the semi-low $f_u$-ring associated to the boolean algebra $2^n$ is precisely the ring $\Z^n$. 
	
	In  section \ref{mv}, the preliminary concepts about  $MV$-algebras are presented. In section \ref{mvpro}, the $MV$-algebras with product are defined, and in that context, the varieties of $PMV_1$, $PMV_f$, $PMV$-algebras and $MVW$-rigs. Some properties of the  $MVW$-rigs are presented, with examples that illustrate the independence of the axioms chosen. Besides, it is shown that the inclusions between the categories are strict. In section \ref{luanill}  the semi-low $l_u$-rings are presented, (Definition \ref{anillo-especial}) as well as one of the key results of this paper, Theorem \ref{cnumeral}, where the distributive property of the product for $PMV_f$-chains is proven. In section \ref{equivcat} we find the main result of this paper, the construction of the equivalence is extended to the category of  $PMV_f$-algebras with product, and the category of semi-low $f_u$-rings. In section \ref{pmv-anil}, some consequences of the equivalence are drawn, and in particular the construction of the ring associated to the boolean algebras is sketched. Finally, in section \ref{co-extensividad}, we proof that the categories $\mathcal{PMV}_1$ and $\mathcal{PMV}_f$ are coextensive.

	\section{$MV$-algebras}\label{mv}
	Some properties of the theory of  $MV$-algebras are presented, that are relevant to this work. The reader can find more complete information in \cite{Cignoli}.
	
	\begin{Def}[$MV$-algebra]\label{MV}
		An $MV$-algebra is a structure $\left(A,\oplus,\neg,0\right)$ such that $\left(A,\oplus,0\right)$ is a commutative monoid and the operation  $\neg$ satisfies:
		\begin{itemize}
			\item[\textit{i})] $\neg(\neg x)=x,$
			\item[\textit{ii})] $x\oplus \neg 0=\neg 0,$
			\item[\textit{iii})] $\neg(\neg x\oplus y)\oplus y =\neg(\neg y\oplus x)\oplus x.$
		\end{itemize}
	\end{Def}
	Because of properties of $MV$-algebras, $ 0 \leq a \leq u$  for all $a \in A$, with $u = \neg 0$. The operation $\neg$ is called \textbf{negation}, while the operation $\oplus$ is called \textbf{sum}.
	
	\begin{Afir}[Order]\label{orden} Every $MV$-algebra $A$ is ordered by the relation,
		\begin{center}
			$x\leq y$ if and only if $x\ominus y=0$, for all  $x,y\in A.$	
		\end{center}
	\end{Afir}	
	
	\begin{Def}[Homomorphism]
		Given two $MV$-algebras $A$ and $B$, a function  $f: A \to B$ is a homomorphism of $MV$-algebras  if for every $x,y$ en $A:$
		\begin{itemize}
			\item [\textit{i})]  $f(0)=0,$
			\item [\textit{ii})]  $ f(x \oplus y)=f(x) \oplus f(y),$ 
			\item [\textit{iii})] $ f(\neg x) =\neg f(x).$
		\end{itemize}  
	\end{Def}
	
	\begin{Def}[Ideal of an $MV$-algebra]\label{idealmv}
		A non-empty subset $I$ of an $MV$-algebra $A,$ is an ideal if and only if:
		\begin{itemize}
			\item [\textit{i})]  If $a \leq b$ and $b \in I$, then $a \in I.$
			\item [\textit{ii})]  If $a,b \in I$, then $a \oplus b \in I.$
		\end{itemize}
		
		The set of all ideals of the $MV$-algebra $A$ will be denoted by  $Id(A).$ \\
	\end{Def}
	
	\begin{Def}[Prime ideal of an $MV$-algebra]\label{primomv}
		An ideal $P$ of an $MV$-algebra $A$, is prime if for all $a,b \in A$, $a \wedge b \in P$ implies $a \in P$ or $b \in P.$ 
		
		The set of all prime ideals of the $MV$-algebra $A$ will be called ${Spec(A)},$ the spectrum of $A$. 
	\end{Def}
	\begin{Teo}[Chang representation theorem \cite{Chang2}]\label{repchang}
		Every non trivial $MV$-algebra is isomorphic to a subdirect product of $MV$-chains. 
	\end{Teo}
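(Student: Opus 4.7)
The plan is the classical subdirect representation argument, using prime ideals as the indexing set. I would proceed in three steps.

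First, I would establish that for every prime ideal $P$ of an $MV$-algebra $A$, the quotient $A/P$ is totally ordered, i.e.\ an $MV$-chain. The key identity is $(x \ominus y) \wedge (y \ominus x) = 0$, which holds in every $MV$-algebra. Since $0 \in P$, primality of $P$ gives either $x \ominus y \in P$ or $y \ominus x \in P$, and translating this into the quotient yields $[x] \leq [y]$ or $[y] \leq [x]$ by Affirmation \ref{orden}. Thus $A/P$ is linearly ordered.

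Second, I would show that the intersection of all prime ideals of $A$ equals $\{0\}$. Given any $a \neq 0$ in $A$, I would invoke Zorn's lemma on the family of ideals not containing $a$ (nonempty because $\{0\}$ is in it, and closed under unions of chains). A maximal element $P$ of this family is prime: if $b \wedge c \in P$ but $b, c \notin P$, then the ideals generated by $P \cup \{b\}$ and $P \cup \{c\}$ both contain $a$, and combining these containments through standard ideal-generation identities for $MV$-algebras leads to $a \in P$, a contradiction. Hence for every $a \neq 0$ there is a prime ideal missing $a$, proving $\bigcap_{P \in Spec(A)} P = \{0\}$.

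Third, I would assemble the representation. Define
\[
\varphi : A \longrightarrow \prod_{P \in Spec(A)} A/P, \qquad \varphi(a) = ([a]_P)_{P \in Spec(A)}.
\]
This is an $MV$-homomorphism because each quotient map is. It is injective by the second step: $\varphi(a) = 0$ forces $a \in P$ for all $P$, hence $a = 0$. Finally, each coordinate projection $\pi_P \circ \varphi : A \to A/P$ is the canonical surjection, so the image is a subdirect product of the family $\{A/P\}_{P \in Spec(A)}$ of $MV$-chains, which is exactly what was required.

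The main obstacle is the second step: the purely algebraic verification that a maximal ideal avoiding a fixed element is prime. It depends on a careful description of the ideal generated by $I \cup \{b\}$ in an $MV$-algebra (roughly $\{x \in A : x \leq i \oplus nb \text{ for some } i \in I, n \in \N\}$) and on exploiting $MV$-identities that interchange $\oplus$ with $\wedge$. The first and third steps are then essentially formal.
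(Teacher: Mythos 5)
The paper states this theorem as a cited classical result (Chang \cite{Chang2}; see also \cite{Cignoli}) and gives no proof of its own, so there is nothing internal to compare against. Your three-step argument is the standard and correct proof: $A/P$ is a chain for $P$ prime via the identity $(x\ominus y)\wedge(y\ominus x)=0$, the radical $\bigcap_{P\in Spec(A)}P=\{0\}$ via Zorn's lemma and the primality of ideals maximal among those avoiding a fixed $a\neq 0$, and the diagonal map into $\prod_P A/P$ assembles the subdirect embedding; you also correctly identify the only genuinely technical point, namely the description of the ideal generated by $I\cup\{b\}$ and the $\wedge$/$\oplus$ distributivity inequalities needed to show the maximal avoiding ideal is prime.
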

	
	\section{$MV$-algebras with product}\label{mvpro}
	\begin{Def}
		An $MV$-algebra with product is a structure  $(A,\oplus,\cdot, \neg,0)$  such that $(A,\oplus,\neg,0)$ is an $MV$-algebra, and $(A,\cdot)$ is a  semigroup. 
	\end{Def}
	The operation $\cdot $ is called \textbf{product}, and the notation used is : $\underbrace{a\cdot a\cdot \ldots \cdot a}_{n-times}=a^n.$
	
	Next, four varieties of $MV$-algebras with product are defined, namely the $MVW$-rigs, the $PMV$-algebras, the $PMV_f$-algebras and the unitary $PMV_1$-algebras. Some of their properties are proved and in particular, we show that each one is contained in the other.  
	
	From here on, all products are supposed to be commutative. 
	
	\begin{Def}(MVW-rig [\cite{Estrada},2.4])\label{DefMVWrig}
		An \textbf{MVW-rig}   $( A, \oplus,  \cdot, \neg ,0 )$ is an $MV$-algebra with product such that 
		\begin{itemize}
			\item[$i)$] $ a  0 = 0 a = 0,$\label{MVW-i}
			\item[$ii)$]  $\big(a(b\oplus c)\big)\ominus (ab\oplus ac)=0,$\label{MVW-ii}
			\item[$iii)$] $(ab\ominus ac)\ominus\big(a(b\ominus c)\big) =0.$\label{MVW-iii}
		\end{itemize}
	\end{Def}
	
	\begin{Obs} For every  $a,b,c\in A,$  axiom $ii)$ is equivalent to 	
		\begin{equation}\label{reescritura1}
		a(b\oplus c)\leq ab\oplus ac
		\end{equation}
		
		and axiom  $iii)$ is equivalent to 
		\begin{equation}\label{reescritura2}
		ab\ominus ac\leq a(b\ominus c).
		\end{equation}
	\end{Obs}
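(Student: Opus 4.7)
The plan is to apply directly the characterization of the MV-algebra order recorded in Affirmation \ref{orden}, namely that for all $x,y$ in an MV-algebra $A$,
\[
x\leq y \;\Longleftrightarrow\; x\ominus y=0.
\]
Both equivalences stated in the observation are then instances of this single fact, so the proof reduces to identifying the right choice of $x$ and $y$ in each case.

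For the first equivalence, I would set $x:=a(b\oplus c)$ and $y:=ab\oplus ac$. With these choices, the equation $x\ominus y=0$ is literally axiom $ii)$ of Definition \ref{DefMVWrig}, while $x\leq y$ is exactly (\ref{reescritura1}). Affirmation \ref{orden} gives the equivalence between the two formulations.

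For the second equivalence, I would take $x:=ab\ominus ac$ and $y:=a(b\ominus c)$. Then $x\ominus y=0$ is axiom $iii)$, and $x\leq y$ is (\ref{reescritura2}); again Affirmation \ref{orden} converts one into the other.

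There is no genuine obstacle here: the observation is not a computation but a rephrasing, whose only content is to recognize each defining identity of an \textbf{MVW}-rig as the vanishing of a difference, and to invoke the standard MV-algebra order characterization. The point of stating it explicitly is that the inequality forms (\ref{reescritura1}) and (\ref{reescritura2}) are more convenient for use in later arguments about $PMV_f$-algebras.
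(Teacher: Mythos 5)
Your proposal is correct and matches the paper's intent exactly: the observation is stated without proof precisely because it is an immediate instance of Affirmation \ref{orden}, applied with $x:=a(b\oplus c)$, $y:=ab\oplus ac$ for axiom $ii)$ and $x:=ab\ominus ac$, $y:=a(b\ominus c)$ for axiom $iii)$. Nothing further is needed.
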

	\begin{Def}
		An  $MVW$-rig $A$ is called unitary if there exists an element  $s$ with the property that for every  $x$ in $A$ $sx = xs = x$. It is follows that $s$ is unique.
	\end{Def}

	\begin{Def}[$PMV$ \cite{DiNola2}]\label{PMV}
		A $PMV$-algebra $A$, is an $MV$-algebra with product such that for every $a,b,c \in A$ $i)$ $a\odot b=0$ implies $ac \odot bc = 0$; $ii)$ $a \odot b=0$ implies $c(a\oplus b)= ca \oplus cb.$  
	\end{Def}
	In Theorem 3.1 of \cite{DiNola2}, it is shown that the class $PMV$ is equationally definible.
	\begin{Def}[$PMV_f$]\label{PMVE}
		A $PMV_f$-algebra is an $MVW$-rig such that for every $a,b,c \in A$, $ab \leq a \wedge b$, and $a(b \ominus c)=ab \ominus ac$.
	\end{Def}
	\begin{Def}[$PMV$-Unitary algebra \cite{Montagna2}]\label{Unitaria}
		A $PMV$- unitary algebra is an $MV$-algebra $A$ with product such that for every $a,b,c \in A$, $au = a$, y $a(b \ominus c)=ab \ominus ac.$
	\end{Def}
	\begin{Teo}\label{contenencias}
		The following inclusions hold:  $$PMV_1 \subset PMV_f \subseteq PMV \subset MVW\text{-rig}$$
	\end{Teo}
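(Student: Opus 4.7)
The plan is to establish the three containments in turn by deriving the defining axioms of each larger class from those of the smaller one, and then to exhibit explicit witnesses for the two strict inclusions. The main obstacle will be $PMV \subseteq MVW$-rig, since the $PMV$-distributivity is only conditional on $a \odot b = 0$ and must be lifted to the unconditional MVW-rig inequalities.

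For $PMV_1 \subseteq PMV_f$, I start from $au = a$ and $a(b \ominus c) = ab \ominus ac$. Setting $c = u$ yields $ab = a \ominus a\neg b$, hence $ab \leq a$; commutativity then gives $ab \leq a \wedge b$. The MVW-rig axioms follow quickly: $a \cdot 0 = au \ominus au = 0$ for $(i)$; axiom $(iii)$ is weaker than the hypothesis; and $(ii)$ is obtained by expanding $a(b \oplus c) = a \ominus (a\neg b \ominus ac)$ via $\ominus$-distributivity and applying the MV-identity $x \odot (\neg x \oplus y) = x \wedge y$ to conclude $a(b \oplus c) = a \wedge (ab \oplus ac) \leq ab \oplus ac$.

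For $PMV_f \subseteq PMV$, the key observation is that $a \odot b = 0$ forces $b = (a \oplus b) \ominus a$, so $\ominus$-distributivity gives $cb = c(a \oplus b) \ominus ca$, whence $ca \oplus cb = c(a \oplus b) \vee ca = c(a \oplus b)$, using monotonicity of the product (itself a consequence of $\ominus$-distributivity, via $ab = a(c \ominus (c \ominus b)) = ac \ominus a(c \ominus b) \leq ac$ when $b \leq c$). The remaining $PMV$ axiom follows from expanding $c(a \odot b) = ca \ominus c\neg b = 0$ and combining with $cb \leq b$ to obtain $ca \leq c\neg b \leq \neg b \leq \neg(cb)$. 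For $PMV \subseteq MVW$-rig, I use the decomposition $b \oplus c = b' \oplus c$ with $b' := b \ominus (b \odot c)$ and $b' \odot c = 0$ to convert the conditional distributivity into MVW-rig axiom $(ii)$; for axiom $(iii)$, the orthogonal decomposition $b = (b \ominus c) \oplus (b \wedge c)$ with $(b \ominus c) \odot (b \wedge c) = 0$ and its analogue for $c$ reduce $ab \ominus ac$ to $a(b \ominus c) \ominus a(c \ominus b) \leq a(b \ominus c)$.

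For the two strict containments, equipping any nontrivial MV-algebra with the zero product $a \cdot b := 0$ trivially verifies all $PMV_f$-axioms yet fails $au = a$, showing $PMV_1 \subsetneq PMV_f$. The MV-algebra $[0,1]$ with $a \cdot b := a \wedge b$ is an MVW-rig by Proposition \ref{proinfimo}, but at $a = b = c = 1/2$ we have $a \odot b = 0$ and $(c \wedge a) \oplus (c \wedge b) = 1 \neq 1/2 = c \wedge (a \oplus b)$, violating $PMV$ axiom $(ii)$ and establishing $PMV \subsetneq MVW$-rig. The principal technical hurdle throughout will be verifying the two MV-algebra decomposition identities underlying the $PMV \subseteq MVW$-rig step; these are standard but each warrants an explicit Chang-style check on chains.
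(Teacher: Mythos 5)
Your proposal is correct, but it travels a noticeably different road from the paper's on two of the three inclusions. For $PMV_1\subseteq PMV_f$ the paper simply cites Lemma 2.9-iii of Montagna, whereas you re-derive everything equationally ($ab=a\ominus a\neg b\leq a$, and $a(b\oplus c)=a\odot(\neg a\oplus ab\oplus ac)=a\wedge(ab\oplus ac)$); your computation is sound and makes the paper self-contained at the cost of a few lines. For $PMV_f\subseteq PMV$ the two arguments are essentially the same in spirit (both exploit $ab\leq a\wedge b$ for the first $PMV$ axiom and $\ominus$-distributivity plus monotonicity for the second), differing only in which MV-identity carries the load: you use $b=(a\oplus b)\ominus a$ and $x\oplus(y\ominus x)=x\vee y$, the paper uses $b\oplus a=\neg(\neg b\ominus a)$ and Montagna's 2.7-vii. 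The real divergence is $PMV\subset MVW$-rig: the paper invokes the Di Nola--Dvure\v{c}enskij representation theorem (every $PMV$-algebra is $\Gamma(R,u)$ of an $l_u$-ring) together with Proposition \ref{Gamma}, while you lift the conditional distributivity to the unconditional inequalities via the orthogonal decompositions $b\oplus c=(b\ominus(b\odot c))\oplus c$ and $b=(b\ominus c)\oplus(b\wedge c)$, plus the standard inequality $(x\oplus z)\ominus(y\oplus z)\leq x\ominus y$. This is more elementary and avoids a deep external theorem, though it obliges you to verify those MV-identities on chains (routine via Chang completeness) and, as you note, to establish monotonicity of the product inside $PMV$ first. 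Your counterexamples for strictness also differ but are both valid: the zero product in place of the paper's $[0,u]$ with $0<u<1$, and $[0,1]$ with the infimum product at $a=b=c=1/2$ in place of the paper's $\mathbf{Z}_{10}$ (your witness is essentially the paper's own Observation \ref{nopmv}). The only small item you should add is MVW-rig axiom $i)$ in the last step: from the $PMV$ axioms with $a=b=0$ one gets $c0\oplus c0=c0$ and $c0\odot c0=0$, which together force $c0=0$.
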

	\begin{proof}
		The first inclusion, $PMV_1 \subset PMV_f,$ follows from Lemma 2.9-iii on \cite{Montagna2} and example \ref{non-unitary-example}.
		
		For the second inclusion, given $a,b,c\in PMV_f,$ if $a \odot b =0$, since  $ac\leq a$ and $bc\leq b$ then  $ac \odot bc \leq a \odot b =0$. On the other hand, $a\odot b=0$ implies $a \leq u \ominus b$ and therefore $ca \leq c(u \ominus b)\leq cu,$  and this last inequality implies (see proposition 2.7-vii, \cite{Montagna2}) $c(b\oplus a)= c(u\ominus ((u \ominus b) \ominus a))=cu \ominus (c(u\ominus b)\ominus ca)= (cu \ominus c(u\ominus b)) \oplus ca = cb \oplus ca$. 
		
		The inclusion $PMV \subset MVW$-rig  is proven in proposition \ref{dinoale}. To see that it is a strict inclusion, see example  \ref{Z-n}.
	\end{proof}
	
	\subsection{Examples and properties of the $MVW$-rigs}
	\begin{Exp}
		Every $MV$-algebra with the  product defined by $ab=0$, for all  $a,b \in A,$ is an $MVW$-rig.
	\end{Exp}
	\begin{Exp}\label{Ejemplousual}
		The $MV$-algebra $[0,1]$ with the usual multiplication inherited from $\mathbb{R}$ is a commutative $MVW$-rig with unitary element $u=1$.
	\end{Exp}
	\begin{Exp}\label{Ejemplosemiusual}
		The $MV$-algebra $[0,u]$ of real numbers with $0 \leq u < 1$ is a commutative $MVW$-rig, but it is not unitary. 
	\end{Exp}
	\begin{Exp}\label{EjemploFree}
		The $MV$-algebra of the continuous functions from $[0,1]^n$ to $[0,1]$ with the usual product for functions is an  $MVW$-rig with the property:  $xy\leq x \wedge y$.
	\end{Exp}
	\begin{Exp}\label{EjemploLukasiewicz}[\cite{Estrada},2.10].
		Consider the algebra $\widetilde{{\L_n}} =  \{ \frac{m}{n^k} \in \mathbb{Q}\cap [0,1]| $  $k, m \in \mathbb{N}\} $ obtained by closing the \L ukasie\-wicz algebra $\L_n$ under products, where   $\L_n = \left\langle\{0,\frac{1}{n-1},\frac{2}{n-1},\cdots,\frac{n-2}{n-1}, 1\}, \oplus, \neg \right\rangle $ with the usual product, is an  $MVW$-rig .
	\end{Exp}
	\begin{Exp}\label{Z-n}[\cite{Estrada},2.11]. $\mathbf{Z}_{n} = \{ 0, 1, \dots , n \}$ with $n \in \mathbb{N}$, $u = n$ as strong unit, $x \oplus y = $ min$\{n, x + y \} $, $\neg x = n-x$ y $x y = $ min$\{n, x \cdot y \} $, is an $MVW$-rig  where sum and product are the usual operations on the natural numbers.  
		
		$\mathbf{Z}_{n}$ is unitary, and $u \neq 1$. The cancellation law does not hold, because the product of two elements can be larger than the supremum. In some cases, the strict inequality in (\ref{reescritura2}) holds, even though the equality (\ref{reescritura1}) is always true. 
		For example, in $\mathbf{Z}_{10}$, $2(7\ominus 6)=2[\neg(\neg 7\oplus 6)]=2[\neg(3\oplus 6)]=2[\neg 9]=2(1)=2 > (2)(7)\ominus(2)(6)=10\ominus 10=0.$
		
		$\mathbf{Z}_{n}$ is not always a $PMV$ algebra either, because for example in $\mathbf{Z}_{10},$ $(3)(2)\odot(3) (2)=6\odot 6=2,$ even though $2\odot 2=0.$
	\end{Exp}
	\begin{Exp}
		$\widehat{\L}_{n+1}=  \left\langle \{0,\frac{1}{n},\cdots \frac{n-1}{n},1\},\oplus, \cdot, \neg, 0,1\right\rangle,$ with product defined by  $\dfrac{x}{n}\cdot \dfrac{y}{n}=\dfrac{min\{n,x\cdot y\}}{n}$, for  $x,y\in \{0, 1, \cdots, n\},$ is an MVW-rig isomorphic to  $\mathbf{Z}_{n}$, and the isomorphism is defined by $\varphi: \L_{n+1} \longrightarrow \mathbf{Z}_{n}, \,\frac{x}{n} \longmapsto x$.
	\end{Exp}
	\begin{Pro}\label{proinfimo}
		Every  $MV$-algebra $A$ with product defined by the infimum  $x\cdot y=x\wedge y$ is an $MVW$-rig.
	\end{Pro}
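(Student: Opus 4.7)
My plan is to verify the three axioms of Definition~\ref{DefMVWrig} for the structure $(A,\oplus,\wedge,\neg,0)$. First I would observe that $\wedge$ is commutative and associative in any $MV$-algebra and that $a \wedge 0 = 0$ because $0$ is the least element, so $(A,\wedge)$ is automatically a commutative semigroup and axiom $i)$ is immediate. The substantive content lies in axioms $ii)$ and $iii)$, which, via the reformulations \eqref{reescritura1} and \eqref{reescritura2}, amount to the inequalities
\begin{equation*}
a \wedge (b \oplus c) \leq (a \wedge b) \oplus (a \wedge c), \qquad (a \wedge b) \ominus (a \wedge c) \leq a \wedge (b \ominus c).
\end{equation*}

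To attack these I would note that each is a universal formula of the form $t = 0$ in the $MV$-signature and is therefore preserved under subdirect products. By Chang's representation theorem (Theorem~\ref{repchang}) it will consequently suffice to verify them when $A$ is an $MV$-chain, where $\wedge$ coincides with $\min$. In a chain both inequalities reduce to a short case analysis: for the first, $b$ and $c$ play symmetric roles, so I would assume $b \leq c$ and split on the three cases $a \leq b$, $b \leq a \leq c$, $c \leq a$, each of which follows from monotonicity of $\oplus$ and the trivial bound $x \leq x \oplus y$. For the second, the case $b \leq c$ makes both sides zero, while in the case $c < b$ I would split on the position of $a$ relative to $b$ and $c$; only monotonicity of $\ominus$ and the bound $x \ominus y \leq x$ are needed.

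The main conceptual move is the reduction to chains via Chang's theorem; once inside a chain, nothing deeper than the total-order behaviour of $\oplus$ and $\ominus$ is required, so the hard part will simply be keeping the case splits tidy and symmetric. A fully equational alternative, which would avoid subdirect representation altogether, would exploit the standard distributivities $x \oplus (y \wedge z) = (x \oplus y) \wedge (x \oplus z)$ and $x \odot (y \vee z) = (x \odot y) \vee (x \odot z)$ valid in every $MV$-algebra: the first rewrites $(a \wedge b) \oplus (a \wedge c)$ as a meet of four terms, each individually above $a \wedge (b \oplus c)$, while the second collapses $(a \wedge b) \ominus (a \wedge c)$ to $(a \wedge b) \ominus c$, which is bounded above by both $a$ and $b \ominus c$, hence by $a \wedge (b \ominus c)$.
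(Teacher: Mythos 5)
Your proposal follows essentially the same route as the paper: reduce to $MV$-chains via Chang's representation theorem (justified because the axioms are equations, hence preserved under subdirect products) and then verify the two inequalities \eqref{reescritura1} and \eqref{reescritura2} by a case analysis on the total order. Your write-up is in fact slightly more complete than the paper's, which only carries out the cases for the first inequality and dismisses the second with ``similarly''; the purely equational alternative you sketch at the end is also valid but is not needed and is not the paper's argument.
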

	\begin{proof}
		Because the product is defined in terms of the order, by the Chang representation theorem, it is enough to show that the result holds for every totally ordered  $PMV$ algebra. Since the product defined by the infimum is associative and commutative, it is enough to prove the inequalities  (\ref{reescritura1}) and (\ref{reescritura2}). Consider $a,b,c \in A$ an $MV$-chain. If $b\oplus c \leq a$ then $b\oplus c=a\wedge (b\oplus c) \leq b\oplus c = (a \wedge b)\oplus (a\wedge c)$. If on the contrary $a\leq b\oplus c$, and $a\le b,c$, then $a=a\wedge (b\oplus c) \leq a \oplus a = (a\wedge b)\oplus (a\wedge c)$. If $a\leq b\oplus c$ and $b\leq a\leq c$, then $a=a\wedge (b\oplus c)\leq a\oplus a = (a\wedge b)\oplus (a\wedge c)$. Similarly it can be proved that $a\wedge (b\ominus c)\geq a\wedge b \ominus a\wedge c.$
	\end{proof}
	\begin{Obs}\label{nopmv}
		Even though every $MV$-algebra is an $MVW$-rig with the product given by the infimum, in general it is not a  $PMV$-algebra, as is shown in the next example. 
	\end{Obs}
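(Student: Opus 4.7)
The statement is an observation whose content is a failure claim, so its ``proof'' will consist in exhibiting a single $MV$-algebra together with elements witnessing that some $PMV$-axiom breaks when the product is taken to be the infimum. The plan is therefore to pick a small, well-understood chain, invoke Proposition \ref{proinfimo} to know that it is an $MVW$-rig, and then check by direct computation that it fails one of the two defining conditions in Definition \ref{PMV}.

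The natural test algebra is the standard \L ukasiewicz $MV$-algebra $[0,1]$ with $\cdot=\wedge$. Before searching for a counterexample, I would first dismiss the first $PMV$-condition: if $a\odot b=0$ then $(a\wedge c)\odot(b\wedge c)\le a\odot b=0$ because the product $\odot$ is monotone and $a\wedge c\le a$, $b\wedge c\le b$. So this condition is automatically satisfied whenever the product dominates the infimum from below, and no counterexample can live there.

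Consequently, the counterexample must lie in the distributivity condition $a\odot b=0\Rightarrow c(a\oplus b)=ca\oplus cb$. With the product replaced by $\wedge$, I would look for $a,b,c\in[0,1]$ with $a\odot b=0$ (i.e.\ $a+b\le 1$) but $c\wedge(a\oplus b)\ne (c\wedge a)\oplus(c\wedge b)$. The symmetric choice $a=b=\tfrac12$ makes $a\odot b=0$ and $a\oplus b=1$, so the left hand side collapses to $c$, while the right hand side becomes $(c\wedge\tfrac12)\oplus(c\wedge\tfrac12)$, which equals $1$ as soon as $c\ge\tfrac12$. Taking $c=\tfrac34$ then yields $\tfrac34\ne 1$.

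There is no real obstacle here: the only non-trivial thing to keep in mind is that one must rule out the first $PMV$-axiom cheaply so as to know where to place the counterexample. Whatever concrete algebra the authors produce in the subsequent example will, I expect, embody exactly this kind of failure of $\oplus$-distributivity of $\wedge$ in the presence of \L ukasiewicz truncation.
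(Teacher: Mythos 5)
Your proposal is correct and takes essentially the same route as the paper: the paper's example uses the finite chain $\L_4$ with $a=b=c=\tfrac{1}{3}$ (so $\tfrac13\odot\tfrac13=0$ yet $\tfrac13\wedge(\tfrac13\oplus\tfrac13)=\tfrac13<\tfrac23=(\tfrac13\wedge\tfrac13)\oplus(\tfrac13\wedge\tfrac13)$), which is the same failure of the second $PMV$-axiom you exhibit in $[0,1]$ with $a=b=\tfrac12$, $c=\tfrac34$. Your preliminary observation that the first $PMV$-axiom cannot fail for the infimum product is a correct extra remark the paper omits.
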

	The \L ukasiewics $MV$-algebra  $\L_4$ with the product defined by the infimum is not a $PMV$ algebra because $\dfrac{1}{3}\odot\dfrac{1}{3}=0$  and  $\dfrac{1}{3}=\dfrac{1}{3}\wedge \left(\dfrac{1}{3}\oplus \dfrac{1}{3}\right)<\dfrac{1}{3}\oplus\dfrac{1}{3}=\dfrac{2}{3}.$ 
	
	\begin{Pro}\label{prosupremo}
		Every $MV$-algebra $A$ with product defined by the supremum for non-zero elements, namely,  $ab=a\vee b$, si $a\neq 0$ y $b\neq 0$ and zero otherwise, is an $MVW$-rig.
	\end{Pro}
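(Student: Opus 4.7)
The plan is to mimic the proof of Proposition \ref{proinfimo}: first verify the structural requirements (commutativity and associativity of the product), then reduce the two inequalities to the $MV$-chain case via the Chang representation theorem (Theorem \ref{repchang}), and finally carry out a short case analysis on each chain.

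\textbf{Step 1 (Product structure).} Commutativity of $\cdot$ is immediate from commutativity of $\vee$. For associativity, I would split on whether any of $a,b,c$ is zero: if at least one is zero, both $(ab)c$ and $a(bc)$ are zero; if all three are nonzero, then $ab=a\vee b$ is nonzero, $bc=b\vee c$ is nonzero, and $(ab)c=(a\vee b)\vee c=a\vee(b\vee c)=a(bc)$. So $(A,\cdot)$ is a commutative semigroup. Axiom $i)$ of Definition \ref{DefMVWrig} holds by definition of the product.

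\textbf{Step 2 (Reduction to chains).} Both remaining axioms can be rewritten as inequalities (\ref{reescritura1}) and (\ref{reescritura2}), and the product is defined purely in order-theoretic terms (``nonzero'' is a first-order condition on the order, and $\vee$ is order-definable). Hence by Theorem \ref{repchang} it suffices to verify (\ref{reescritura1}) and (\ref{reescritura2}) in every $MV$-chain. From here I fix an $MV$-chain $A$ and $a,b,c\in A$, and assume $a\ne 0$ (otherwise both inequalities are trivial).

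\textbf{Step 3 (Axiom $ii$).} I would first dispose of the degenerate case where $b$ or $c$ is zero: if both are zero the inequality is $0\le 0$; if exactly one is zero, say $c=0$, then $b\oplus c=b$ and both sides equal $a\vee b$. For $b,c\ne 0$ we have $b\oplus c\ne 0$, so $a(b\oplus c)=a\vee(b\oplus c)$ and $ab\oplus ac=(a\vee b)\oplus(a\vee c)$. On the chain, I do the three subcases $a\le b\wedge c$, $b\wedge c\le a\le b\vee c$, and $b\vee c\le a$, using that in each of them $(a\vee b)\oplus(a\vee c)$ dominates both $a$ and $b\oplus c$, hence dominates their maximum.

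\textbf{Step 4 (Axiom $iii$).} Again reduce to $b,c\ne 0$. If $b\le c$, then $b\ominus c=0$ and $a\vee b\le a\vee c$, so both sides vanish. If $b>c$, then $b\ominus c\ne 0$ and the inequality becomes $(a\vee b)\ominus(a\vee c)\le a\vee(b\ominus c)$. I split on the position of $a$: the cases $a\le c<b$, $c<a\le b$, and $c<b\le a$ each reduce the left-hand side to $b\ominus c$, $b\ominus a$, or $0$ respectively, and in every case it is dominated by $a\vee(b\ominus c)$ (using that $\ominus$ is antitone in its second argument, so $b\ominus a\le b\ominus c$ when $c\le a$).

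The main obstacle is just keeping the case analysis in Step 4 under control; the combinatorics of $\vee$ versus $\oplus$ and $\ominus$ on a chain is routine once one observes that $b\oplus c\ne 0$ and $b\vee c\ne 0$ whenever $b,c\ne 0$, which is what prevents the piecewise definition of the product from creating discontinuities in the inequalities.
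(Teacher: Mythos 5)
Your chain-level computations in Steps 3 and 4 are correct, and your Step 1 correctly handles associativity and axiom $i)$. The gap is in Step 2. The justification you give for invoking Theorem \ref{repchang} --- that ``nonzero is a first-order condition on the order'' --- is not the right kind of condition: for the reduction to chains to work, the product must be \emph{preserved} by the quotient maps $A \to A/P$ onto the chains of the subdirect decomposition, and the supremum-for-nonzero product is not. Concretely, in $A=\{0,1\}^2$ take $a=(1,0)$ and $b=(0,1)$: both are nonzero in $A$, so $ab=a\vee b=(1,1)$, but the first-coordinate projection sends $a,b$ to $1,0$, whose chain-level product is $0\neq 1$. So verifying axioms $ii)$ and $iii)$ in every $MV$-chain equipped with this product does not, by itself, transfer back to $A$. (This is exactly the point that makes Proposition \ref{proinfimo} unproblematic --- there the product $x\wedge y$ is an $MV$-term, hence preserved by all homomorphisms --- and it is glossed over both in your Step 2 and in the paper's own one-line proof, which makes the same reduction without comment.)

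The repair is short and keeps all of your case analysis: perform the zero/nonzero split \emph{in $A$ itself} rather than inside a chain. If $a=0$, or ($b=0$ and $c=0$), all terms in both axioms vanish; if exactly one of $b,c$ is zero the two sides coincide directly. In the remaining case $a,b,c\neq 0$ one has $b\oplus c\neq 0$, so axiom $ii)$ becomes the pure $MV$-term inequality $a\vee(b\oplus c)\leq(a\vee b)\oplus(a\vee c)$, and axiom $iii)$ becomes either $a\vee b\leq a\vee c$ (when $b\ominus c=0$, so the right-hand side is $a\cdot 0=0$) or the term inequality $(a\vee b)\ominus(a\vee c)\leq a\vee(b\ominus c)$. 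These are genuine term inequalities in the $MV$-language, so Theorem \ref{repchang} now legitimately reduces them to chains, where your Steps 3 and 4 apply verbatim (indeed, for axiom $ii)$ no case split is needed: the right-hand side dominates both $a$ and $b\oplus c$ by monotonicity of $\oplus$, hence their join, in any $MV$-algebra).
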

	\begin{proof}
		It is enough to show (\ref{reescritura1}) and (\ref{reescritura2}) for totally ordered  $MVW$-rigs.
	\end{proof}
	
	\begin{Exp}
		An interesting and relevant particular case of the proposition is when $A$ is a boolean algebra.  A boolean algebra $A$ can be considered as an  $MV$-algebra, where the sum is given by the supremum and negation is the complement. If the product is defined as in the propositions \ref{proinfimo} or \ref{prosupremo}, every boolean algebra is naturally an $MVW$-rig.
	\end{Exp}

	\begin{Pro} Axiom $iii)$  in the definition \ref{DefMVWrig}, is independent of the other axioms for $MVW$-rig. Similarly, axiom  $i)$ is independent of the others. 
	\end{Pro}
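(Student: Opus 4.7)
The plan is to exhibit, for each independence claim, an explicit $MV$-algebra with product that satisfies all axioms of Definition \ref{DefMVWrig} except the one in question.

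For the independence of axiom $i)$, I would equip any non-trivial $MV$-algebra $A$ with the constant product $x \cdot y := u = \neg 0$. This is obviously an associative and commutative semigroup operation, so $(A,\cdot)$ is an $MV$-algebra with product. Axiom $i)$ fails immediately since $a\cdot 0 = u \neq 0$, whereas axiom $ii)$ reduces to the trivial inequality $u \leq u \oplus u = u$, and axiom $iii)$ to $u \ominus u = 0 \leq u$.

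For the independence of axiom $iii)$, I would take the three-element chain $\{0, 1/2, 1\}$ with its standard $MV$-structure and define a non-standard product by $0 \cdot x := 0$, $1 \cdot x := x$, and $(1/2) \cdot (1/2) := 1$. Associativity requires only inspecting the triple $(1/2)(1/2)(1/2)$, which evaluates to $1/2$ in either bracketing; axiom $i)$ holds by definition; axiom $ii)$ holds by a direct case-by-case verification over the finitely many triples (for instance, $(1/2)\bigl((1/2)\oplus(1/2)\bigr) = 1/2 \leq 1 = (1/2)(1/2) \oplus (1/2)(1/2)$). The failure of $iii)$ then occurs at $a = b = 1/2$, $c = 1$: the left-hand side equals $ab \ominus ac = 1 \ominus 1/2 = 1/2$, whereas the right-hand side is $a(b\ominus c) = (1/2)\cdot 0 = 0$.

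The main difficulty is constructing this second counterexample. Because $\oplus$ and $\ominus$ are linked through the $MV$-algebra negation, most natural perturbations of an admissible product violate $ii)$ and $iii)$ simultaneously; what makes the choice $(1/2)(1/2) = 1$ work is that its oversized output still fits underneath the $\oplus$-truncation on the right side of axiom $ii)$, while on the right side of axiom $iii)$ the $\ominus$-truncation inside the product collapses $a(b\ominus c)$ to zero, producing an asymmetry that breaks only $iii)$.
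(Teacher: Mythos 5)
Your argument is correct and takes essentially the same approach as the paper, namely exhibiting explicit counterexamples on small chains. Your product witnessing the independence of axiom $iii)$ is in fact the paper's own construction ($a\cdot b=a\oplus b$ when $a\odot b=0$ and $a\cdot b=a\odot b$ otherwise, with $0$ absorbing), merely specialized to the three-element chain instead of the four-element \L ukasiewicz chain $\L_4$; for axiom $i)$ you use the constant product $x\cdot y=u$ where the paper uses the supremum as product, and both verifications are equally immediate.
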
	
	\begin{proof} 	
		Consider the \L ukasiewicz $MV$-algebra $\L_4$ with product defined by : 
		$$
		a\cdot b=\left\{\begin{array}{ccl}
		0,&si&a=0 \,\, o\,\, b=0,\\
		a\oplus b,&if &a\odot b=0,\\
		a\odot b, &if & a\odot b\neq 0.
		\end{array}\right.
		$$ 
		In this structure axiom $iii)$ does not hold, but the others do. The product is equivalent to the sum on the integers mod  $3$, $\mathbb{Z}_3$ for the elements of $\L _4 - \{0\}$. Therefore, this product is associative and commutative.  
		
		On the other hand, every $MV$-algebra with the supremum as product is a model for all the axioms of $MVW$-rigs, except for  axiom $i)$. The proof is similar to the one given in proposition \ref{prosupremo}. 
	\end{proof}
	
	\begin{Pro}\label{propiedades}[\cite{Estrada},2.5].
		For every $a,b,c \in A$ a commutative  $MVW$-rig the following properties hold:
		\begin{itemize}
			\item[$i)$] If $a \leq b$ then $ac \leq bc,$
			\item[$ii)$] $u^2 \leq u,
			$
			\item[$iii)$] $a\leq b$ and $c\leq d$ then $ac\leq bd.$
		\end{itemize}

	\end{Pro}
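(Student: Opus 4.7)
The plan is to derive the three items successively from the MVW-rig axioms, using the inequality forms $a(b\oplus c)\leq ab\oplus ac$ and $ab\ominus ac \leq a(b\ominus c)$ noted in the Observation after Definition \ref{DefMVWrig}, together with the order characterization of Affirmation \ref{orden} and commutativity of the product.

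For item $i)$, I would instantiate the inequality form of axiom $iii)$ with the arguments $(a,b,c)$ replaced by $(c,a,b)$, giving $ca \ominus cb \leq c(a \ominus b)$. The hypothesis $a \leq b$ means $a\ominus b = 0$, and then axiom $i)$ of Definition \ref{DefMVWrig} forces $c(a\ominus b) = c\cdot 0 = 0$. Hence $ca \ominus cb = 0$, i.e.\ $ca \leq cb$, and commutativity promotes this to $ac \leq bc$.

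Item $ii)$ is immediate: $u = \neg 0$ is the top element of the underlying $MV$-algebra, so every element of $A$ is $\leq u$; in particular $u^2 \leq u$.

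For item $iii)$, I would apply $i)$ twice and chain the inequalities: $a \leq b$ yields $ac \leq bc$, and $c \leq d$ yields $bc \leq bd$, so by transitivity of $\leq$ we get $ac \leq bd$. I do not anticipate any real obstacle here; the only careful points are the correct substitution of variables in axiom $iii)$ and the appeal to commutativity to finish item $i)$.
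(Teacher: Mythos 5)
Your proposal is correct: for item $iii)$ your argument (apply $i)$ twice and chain by transitivity) is exactly the paper's own proof, and your derivations of $i)$ from the inequality form of axiom $iii)$ together with $a\ominus b=0$ and $c\cdot 0=0$, and of $ii)$ from $u$ being the top element, are the routine verifications the paper delegates to the cited reference [Estrada, 2.5]. No gaps.
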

	\begin{proof}
		Property $iii)$ follows from $i)$; in fact, $a\leq b$ and $c\leq d$ imply that  $ac\leq bc$  and $cb\leq db$.  
	\end{proof}
	
	\subsubsection{Ideals and Homomorphisms of $MVW$-rigs}
	\begin{Def} Given  $A,B$ $MVW$-rigs, a function $f: A\rightarrow B$ is a homomorphism of $MVW$-rigs in and only if 
		\begin{itemize}
			\item[$i)$] $f$ is a homomorphism of $MV$-algebras and
			\item[$ii)$] $f(ab)=f(a)f(b).$\\
		\end{itemize}
	\end{Def}
	
	\begin{Def} The kernel of a homomorphism $\varphi:A\rightarrow B$ of $MVW$-rigs is $$ker(\varphi):=\varphi^{-1}(0)=\{x\in A|\varphi(x)=0\}.$$
	\end{Def}
	
	\begin{Def}\label{ideal} An ideal of an $MVW$-rig $A$ is a subset $I$ of $A$ that has the following properties: 
		\begin{itemize}
			\item[$i)$] $I$ is an ideal of the subjacent $MV$-algebra $A$.
			\item[$ii)$]Given $a\in I$, and  $b\in A,$ $ab\in I$ (Absorbent Property).
		\end{itemize}
		
		$Id_{_W}(A)$ denotes the set of all ideals of the $MVW$-rig $A$.\\
	\end{Def}
	\begin{Exp}[Boolena Algebras]\label{Boole} Every boolean algebra is an $MVW$-rig, taking the supremum as the sum and the infimum as the product.  The ideals of this  $MVW$-rig are the ideals of the $MV$-algebra, that are at the same time ideals for the lattice. 
	\end{Exp}
	\begin{Obs}
		Note that in proposition \ref{prosupremo}, the $MVW$-rig has no proper non trivial ideals. Its only ideals are zero and the $MVW$-rig.
	\end{Obs}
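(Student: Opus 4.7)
The claim is that under the product of Proposition \ref{prosupremo} (where $ab = a\vee b$ for $a,b\neq 0$, and $ab=0$ otherwise), the only ideals of the $MVW$-rig are $\{0\}$ and $A$. My plan is to take an arbitrary nonzero ideal $I$ and show it must be the whole algebra, using both clauses of Definition \ref{ideal}: downward closure under the $MV$-order, and the absorbent property for the product.

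First, I would fix an ideal $I$ with $I \neq \{0\}$, and pick some witness $a \in I$ with $a \neq 0$. Then, given an arbitrary $b \in A$, I would split on whether $b = 0$ (in which case $b \in I$ trivially, since every ideal contains $0$) or $b \neq 0$. In the latter case, the absorbent property (clause $ii$ of Definition \ref{ideal}) gives $ab \in I$, and by the defining formula of the product in Proposition \ref{prosupremo}, $ab = a \vee b$ since both factors are nonzero. In particular $b \leq a \vee b = ab \in I$, so by the downward-closure clause $i$ of the $MV$-algebra ideal definition (Definition \ref{idealmv}), $b \in I$. Thus every $b \in A$ lies in $I$, i.e.\ $I = A$.

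I do not expect any serious obstacle: the argument is essentially the standard fact that an ideal containing a nonzero absorbing element must be improper, and the only thing one really has to check is that the product of two nonzero elements lies above each of them, which is immediate from the very definition $ab = a \vee b$. The only point to be careful about is the case distinction on whether $b = 0$, but this is handled instantly because $0$ lies in every ideal (by downward closure from any element, since $0 \leq x$ for all $x$).
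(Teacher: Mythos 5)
Your argument is correct: the paper states this Observation without proof, and your reasoning --- pick a nonzero $a\in I$, use the absorbent property of Definition \ref{ideal} to get $ab=a\vee b\in I$ for any nonzero $b$, then downward closure to conclude $b\in I$ --- is exactly the natural justification the authors leave implicit. No gaps; the case $b=0$ is handled correctly since $0$ belongs to every ideal.
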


	
	\begin{Def}[Prime ideal of an $MVW$-rig]\label{primo} An ideal $P$ of an $MVW$-rig, is called prime if for every $a,b\in A,$ $ab\in P$ implies $a\in P$ o $b\in P.$
		
		The set of all prime ideals of the $MVW$-rig $A$ is denoted  $Spec_{_W}(A).$ 
	\end{Def}
	\begin{Pro}\cite{Cignoli, Estrada}. \label{congruencias}  There is a bijective correspondence between the set of all ideals of an  $MVW$-rig  $A$ and the set of its congruences. Namely, given  $I$ an ideal of the $MVW$-rig $A$,  the binary relation defined by $x\equiv_I y$ if and only if $(x\ominus y)\oplus (y\ominus x)\in I$ is a congruence relation, and given $\equiv$ any congruence relation in $A$ the set  $\{x\in A|x\equiv 0\}$ is an ideal of $A$.
	\end{Pro}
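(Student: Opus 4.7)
The plan is to reduce to the well-known ideal/congruence correspondence for the underlying $MV$-algebra (Cignoli) and then show separately that the product structure is respected on both sides of the bijection.

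For the direction ideal $\Rightarrow$ congruence, fix an ideal $I$ of the $MVW$-rig $A$. Since $I$ is in particular an ideal of the subjacent $MV$-algebra, the relation $x\equiv_I y \iff (x\ominus y)\oplus(y\ominus x)\in I$ is already known to be a congruence with respect to $\oplus$ and $\neg$. The only thing to verify is compatibility with the product: if $x\equiv_I x'$ and $y\equiv_I y'$, then $xy\equiv_I x'y'$. By transitivity it suffices to prove $xy\equiv_I x'y$ and, by the symmetric argument, $x'y\equiv_I x'y'$. For the first, invoke axiom $iii)$ of Definition \ref{DefMVWrig} in the rewritten form \eqref{reescritura2}, which by commutativity gives $xy\ominus x'y\leq y(x\ominus x')$ and $x'y\ominus xy\leq y(x'\ominus x)$. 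Since $(x\ominus x')\oplus(x'\ominus x)\in I$ and an $MV$-ideal is downward closed under $\oplus$, both $x\ominus x'$ and $x'\ominus x$ lie in $I$; the absorbent property of Definition \ref{ideal} then yields $y(x\ominus x')\in I$ and $y(x'\ominus x)\in I$. Downward closure gives $xy\ominus x'y,\,x'y\ominus xy\in I$, and hence $(xy\ominus x'y)\oplus(x'y\ominus xy)\in I$, i.e.\ $xy\equiv_I x'y$, as required.

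For the direction congruence $\Rightarrow$ ideal, let $\equiv$ be a congruence on $A$ and set $K=\{x\in A:x\equiv 0\}$. Again $K$ is already known to be an $MV$-ideal, so only the absorbent property needs checking: if $a\in K$ and $b\in A$, then from $a\equiv 0$ and product compatibility we get $ab\equiv 0\cdot b$, and axiom $i)$ of Definition \ref{DefMVWrig} gives $0\cdot b=0$, so $ab\equiv 0$ and $ab\in K$.

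That the two assignments are mutually inverse follows verbatim from the $MV$-algebra version, since neither direction alters the underlying set. The only substantive new step is product compatibility, and that is the main obstacle: it is precisely where axiom $iii)$ of the $MVW$-rig definition and the absorbent clause in the definition of an $MVW$-rig ideal are forced to cooperate. Axioms $i)$ and $ii)$ play only ancillary roles (axiom $i)$ for the absorbent check in the converse direction, axiom $ii)$ is not needed here).
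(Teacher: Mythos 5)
Your proof is correct, but it follows a genuinely different route from the paper's. The paper establishes compatibility of the product in a single estimate: writing $a\vee b=(a\ominus b)\oplus b$ and applying the subdistributivity inequality \eqref{reescritura1} (axiom $ii)$ of Definition \ref{DefMVWrig}) together with monotonicity, it bounds $ac\ominus bd$ above by $(a\ominus b)(c\ominus d)\oplus(a\ominus b)d\oplus b(c\ominus d)$, each summand of which lies in $I$ by absorbency, and concludes by symmetry. You instead change one factor at a time, invoking transitivity of $\equiv_I$, and control each step with inequality \eqref{reescritura2} (axiom $iii)$) plus commutativity: $xy\ominus x'y\leq y(x\ominus x')\in I$. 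The two arguments thus lean on complementary axioms: the paper uses $ii)$ and not $iii)$, you use $iii)$ and not $ii)$, while both need the absorbent clause of Definition \ref{ideal}, downward closure, and commutativity of the product. Your closing remark that axiom $ii)$ is not needed is accurate for your argument but should not be read as a claim about the proposition itself, since the paper's proof shows that $ii)$ alone also suffices. Your decomposition is somewhat more modular and isolates exactly where \eqref{reescritura2} is used; the paper's single bound is closer to the classical ring-theoretic manipulation of $ac-bd$ and avoids the appeal to transitivity. The treatment of the converse direction (absorbency of $\{x:x\equiv 0\}$ via $ab\equiv 0\cdot b=0$) and the mutual inverseness, which the paper delegates entirely to the $MV$-algebra case in \cite{Cignoli} and to \cite{Estrada}, are handled correctly in your write-up.
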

	Because it is relevant, the proof of the compatibility of the product is reproduced. The full proof can be found on [\cite{Estrada},2.29].
	\begin{proof}
		Since  $A$ is an $MV$-algebra and $I$ is an $MV$-ideal, $a\equiv_I b$ and $c\equiv_I d$ imply $a\oplus c\equiv_I b\oplus d$ and $\neg a\equiv_I \neg b,$ (see [\cite{Cignoli},1.2.6]). It is left then to prove that $a\equiv_I b$ and $c\equiv_I d$ imply $a c\equiv_I b d.$
		
		Given $a\equiv_I b$ and $c\equiv_I d$ then $a\ominus b\in I$ and $c\ominus d\in I$ respectively, so
		$ac\leq (a\vee b)(c\vee d)=\big((a\ominus b)\oplus b\big)\big((c\ominus d)\oplus d\big)
		\leq(a\ominus b)\big((c\ominus d)\oplus d\big)\oplus b\big((c\ominus d)\oplus d\big)
		\leq (a\ominus b)(c\ominus d)\oplus (a\ominus b)d \oplus b(c\ominus d)\oplus bd.$
		Equivalently
		$$
		ac\ominus bd \leq(a\ominus b)(c\ominus d)\oplus (a\ominus b)d \oplus b(c\ominus d)\in I,
		$$
		because $(a\ominus b)$ and $(c\ominus d)\in I$ and $I$ is absorbent,  $ac\ominus bd\in I.$ Similarly $bd\ominus ac\in I,$ then $(ac\ominus bd)\oplus (bd\ominus ac)\in I,$ and therefore $ac\equiv_I bd.$ 
	\end{proof}
	
	\begin{Obs}\label{clases} For  $a\in A,$ the equivalent class of $a$ respect to $\equiv_I$ will be denoted by $[a]_I$ and the quotient set  $A/\equiv_I$ by $A/I.$   
	\end{Obs}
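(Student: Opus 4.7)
The statement is purely notational: it introduces abbreviations $[a]_I$ for the equivalence class of $a$ under the congruence $\equiv_I$, and $A/I$ for the quotient set $A/\equiv_I$. Accordingly, no substantive theorem is being asserted, and the ``proof'' consists only of verifying that these abbreviations are well-posed, which is immediate from results already in place.

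My plan is to invoke Proposition \ref{congruencias}, which establishes that $\equiv_I$ is a congruence on $A$. In particular $\equiv_I$ is an equivalence relation, so for every $a\in A$ the set $\{b\in A\colon b\equiv_I a\}$ is a well-defined subset of $A$; naming this set $[a]_I$ is unambiguous. Similarly, the collection $\{[a]_I\colon a\in A\}$ is a well-defined quotient set, traditionally written $A/\equiv_I$. The shorter notation $A/I$ is justified because Proposition \ref{congruencias} asserts a bijective correspondence between ideals and congruences on an $MVW$-rig, so the ideal $I$ determines the congruence $\equiv_I$ (and hence the quotient) uniquely, and no ambiguity arises from suppressing the intermediate symbol $\equiv_I$.

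There is no genuine obstacle, since the observation introduces no new mathematical content beyond a naming convention. The only point worth mentioning is a potential clash with the notation $[a]_I$ already used for quotients of the underlying $MV$-algebra; however, by Definition \ref{ideal} every $MVW$-rig ideal is in particular an $MV$-ideal, and the $MVW$-rig congruence $\equiv_I$ is defined by the same formula $(x\ominus y)\oplus(y\ominus x)\in I$ as the $MV$-congruence. Hence the two uses of $[a]_I$ coincide as sets, and reusing the symbol in the $MVW$-rig setting is consistent with the established $MV$-algebra notation, so the observation may be recorded without further comment.
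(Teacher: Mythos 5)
Your proposal is correct and matches the paper, which gives no proof for this observation precisely because it is purely notational: the well-definedness of $[a]_I$ and $A/I$ rests on Proposition \ref{congruencias}, exactly as you say. Your added remark that the notation is consistent with the underlying $MV$-algebra quotient is accurate and harmless, though the paper does not bother to state it.
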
	
	
	Since $\equiv_I$ is a congruence, the operations $\neg[a]_I=[\neg a]_I,$ $[a]_I\oplus[b]_I=[a\oplus b]_I$ y $[a]_I[b]_I=[ab]_I,$ are well defined over $A/I.$
	\begin{Pro}\label{cociente}[\cite{Estrada},2.31]. If $I \in Id_{_W}(A)$, then  $A/I$ is an $MVW$-rig.
	\end{Pro}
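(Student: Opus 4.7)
The plan is to leverage Proposition \ref{congruencias} and the purely equational nature of the $MVW$-rig axioms, so that the quotient structure inherits everything from $A$ along the canonical projection.

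First I would observe that since $I$ is in particular an ideal of the underlying $MV$-algebra, the standard $MV$-algebra quotient construction (see \cite{Cignoli}) already gives that $\left(A/I, \oplus, \neg, [0]_I\right)$ is an $MV$-algebra and that the projection $\pi: A \to A/I,\, a \mapsto [a]_I$, is a surjective homomorphism of $MV$-algebras. Then, using Proposition \ref{congruencias}, the binary operation $[a]_I \cdot [b]_I := [ab]_I$ is well defined on $A/I$, and by construction $\pi(ab) = \pi(a)\pi(b)$. Commutativity and associativity of this induced product are inherited pointwise from $A$ via representatives, so $(A/I, \cdot)$ is a commutative semigroup.

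Next I would verify the three $MVW$-rig axioms of Definition \ref{DefMVWrig} on $A/I$. Each axiom has the form $t(a,b,c) = 0$ for an explicit term $t$ built from $\oplus, \neg, \cdot$. Given arbitrary classes $[a]_I, [b]_I, [c]_I \in A/I$, I would pick representatives $a,b,c \in A$, apply the fact that $\pi$ preserves all the operations, and use that $t(a,b,c) = 0$ holds in $A$ to conclude $t([a]_I,[b]_I,[c]_I) = [t(a,b,c)]_I = [0]_I$. Concretely: $[a]_I [0]_I = [a\cdot 0]_I = [0]_I$ gives axiom $i)$; $\bigl([a]_I([b]_I \oplus [c]_I)\bigr) \ominus \bigl([a]_I[b]_I \oplus [a]_I[c]_I\bigr) = [(a(b\oplus c))\ominus(ab\oplus ac)]_I = [0]_I$ gives axiom $ii)$; and similarly for axiom $iii)$.

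There is no real obstacle: the technical content, namely that $\equiv_I$ is compatible with the product, is exactly what Proposition \ref{congruencias} provides, and everything else is a routine lift of equational identities from $A$ to $A/I$. If anything deserves a line of care, it is only the observation that the $MVW$-rig axioms are genuinely equational (rewriting $x \leq y$ as $x \ominus y = 0$ when needed, as in the Observation after Definition \ref{DefMVWrig}), so that the general principle ``a congruence quotient of an algebra in a variety stays in the variety'' applies here with no extra hypotheses on $I$ beyond what Definition \ref{ideal} already demands.
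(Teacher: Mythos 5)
Your proof is correct and is exactly the argument the paper intends: the paper gives no proof of this proposition itself (it defers to [Estrada, 2.31]), but it explicitly reproduces the compatibility-of-product part of Proposition \ref{congruencias} ``because it is relevant,'' and your proposal combines that compatibility with the routine lifting of the equational $MVW$-rig axioms along the projection, which is all that remains. No gaps.
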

	\begin{Cor} Consider $I \in Spec(A)$ and $A$ an $MVW$-rig. If $I$ is absorbent, then $A/I$ is a totally ordered $MVW$-rig. 
	\end{Cor}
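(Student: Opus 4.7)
The plan is to combine two pieces: the MVW-rig structure on the quotient, which is handed to us by Proposition \ref{cociente}, and the total ordering of the quotient, which should follow from the primality of $I$ as an ideal of the underlying $MV$-algebra exactly as in the pure $MV$-algebra case.

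First I would observe that by hypothesis $I \in Spec(A)$ is a prime ideal of the subjacent $MV$-algebra and, being absorbent, $I \in Id_{_W}(A)$ in the sense of Definition \ref{ideal}. Hence Proposition \ref{cociente} applies and $A/I$ is already an $MVW$-rig; all that remains is to verify that the induced order on $A/I$ is total.

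Next I would reduce the total-ordering statement to a statement about $I$ and $\ominus$ in $A$. By Affirmation \ref{orden} together with the description of $\equiv_I$ in Proposition \ref{congruencias}, one has $[a]_I \leq [b]_I$ in $A/I$ if and only if $[a \ominus b]_I = [0]_I$, i.e.\ $a \ominus b \in I$. So proving totality amounts to showing that for all $a,b \in A$, either $a \ominus b \in I$ or $b \ominus a \in I$.

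Here is where primality does the work: in every $MV$-algebra one has $(a \ominus b) \wedge (b \ominus a) = 0$, and $0 \in I$. Since $I$ is prime in the $MV$-algebra sense (Definition \ref{primomv}), from $(a \ominus b) \wedge (b \ominus a) \in I$ it follows that $a \ominus b \in I$ or $b \ominus a \in I$, which by the previous paragraph gives $[a]_I \leq [b]_I$ or $[b]_I \leq [a]_I$. Combined with the $MVW$-rig structure from Proposition \ref{cociente}, this yields that $A/I$ is a totally ordered $MVW$-rig. The only subtle point is the identity $(a\ominus b)\wedge(b\ominus a)=0$, but this is a standard $MV$-algebra fact available from \cite{Cignoli}, so no real obstacle arises; the argument is essentially the translation of the $MV$-algebra proof that $A/P$ is an $MV$-chain for $P$ prime, with the $MVW$-rig structure simply inherited.
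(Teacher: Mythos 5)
Your proposal is correct and follows the route the paper intends: the paper states this as an immediate corollary of Proposition \ref{cociente} (which gives the $MVW$-rig structure on $A/I$) combined with the standard $MV$-algebra fact that the quotient by a prime ideal is totally ordered, which is exactly the argument you spell out via $(a\ominus b)\wedge(b\ominus a)=0$ and primality. No gaps; your write-up just makes explicit what the paper leaves to the reader.
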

	
	\subsection{Examples and properties of the $PMV_f$-algebras}
	
	\begin{Exp}
		The $MV$-algebra $[0,1]$ with the usual multiplication inherited from $\R$ is a $PMV_f.$
	\end{Exp}
	\begin{Exp}\label{non-unitary-example}
		The $MV$-algebra $[0,u]$ with the usual multiplication , and  $0<u<1$,  is a non-unitary $PMV_f.$ 
	\end{Exp}
	\begin{Exp}
		The set of all continuous functions from $[0,1]^n$ to $[0,1]$ with truncated sum and the usual multiplication is a  $PMV_f.$
	\end{Exp}
	\begin{Exp}
		$F[x_1, \cdots, x_n]$ the set of all continuous functions from $[0,1]^n$ to $[0,1]$, that are constituted by  finite polynomials in $\Z[x_1, \cdots, x_n]$, namely, $f(x_1, \cdots, x_n) \in F[x_1,\cdots, x_n] \Leftrightarrow  \exists p_1,\cdots, p_k \in \Z[x_1,\cdots, x_n]$ such that for all $z\in [0,1]^n, f(z)=p_i(z)$, for some $i\in \{1,\cdots,n\}$, is a $PMV_f$. 
	\end{Exp}
	\begin{Exp}
		Every boolean algebra, as in the example \ref{Boole} is a $PMV_f.$
	\end{Exp}
	\begin{Exp}
		Every $MV$-algebra with multiplication defined by the infimum is an $MVW$-rig, not necessarily $PMV_f$-algebra, as was established on the example \ref{proinfimo}. 
	\end{Exp}
	
	\begin{Pro}\label{espectros}
		Given a $PMV_f$-algebra $A$, $Id_W(A)=Id(A)$ and furthermore $Spec_W(A)\subseteq Spec(A)$.
	\end{Pro}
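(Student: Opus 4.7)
The plan is to exploit the two defining inequalities of a $PMV_f$-algebra, most crucially $xy \leq x \wedge y$, together with the characterization of MV-ideals as downward-closed subsets that are closed under $\oplus$.

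First I would establish $Id_W(A) = Id(A)$. One inclusion, $Id_W(A) \subseteq Id(A)$, is immediate from Definition \ref{ideal}, which requires an $MVW$-rig ideal to be in particular an ideal of the underlying $MV$-algebra. For the reverse inclusion $Id(A) \subseteq Id_W(A)$, given $I \in Id(A)$ and elements $a \in I$, $b \in A$, I would invoke the $PMV_f$ axiom $ab \leq a \wedge b \leq a$; combined with the downward closure of $MV$-ideals (Definition \ref{idealmv}\textit{i})), this yields $ab \in I$, verifying the absorbent property required in Definition \ref{ideal}\textit{ii}).

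Next I would prove $Spec_W(A) \subseteq Spec(A)$. Let $P \in Spec_W(A)$; by the first part, $P \in Id_W(A) = Id(A)$, so $P$ is already an ideal of the subjacent $MV$-algebra. To show it is $MV$-prime, I would suppose $a \wedge b \in P$ and argue that $ab \leq a \wedge b$, so by downward closure $ab \in P$. The $MVW$-primality of $P$ (Definition \ref{primo}) then forces $a \in P$ or $b \in P$, which is exactly the condition required in Definition \ref{primomv}.

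There is no real obstacle here beyond carefully citing the definitions; the inclusion $Spec_W(A) \subseteq Spec(A)$ need not be an equality, since for an $MV$-prime $P$ the hypothesis $ab \in P$ does not in general force $a \wedge b \in P$: we only know $ab \leq a \wedge b$, not the converse, so the symmetric reasoning breaks down. This is consistent with the statement giving $\subseteq$ rather than equality in the second assertion.
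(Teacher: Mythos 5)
Your argument is correct and coincides with the paper's own proof: both directions of $Id_W(A)=Id(A)$ use the axiom $ab\leq a\wedge b$ together with downward closure, and the inclusion $Spec_W(A)\subseteq Spec(A)$ is obtained exactly as in the paper by passing from $a\wedge b\in P$ to $ab\in P$ and invoking $MVW$-primality. Your closing remark on why the second inclusion need not be an equality is a sensible observation not present in the paper, but the core proof is the same.
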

	\begin{proof}
		By definition,  $Id_W(A) \subseteq Id(A)$. Additionally, given  $I \in Id(A)$ and $a\in I$, for all $c\in A$, $ac \leq a \wedge c \in I$, so $I \in Id_W(A)$. On the other hand, given $I \in Spec_{_W}(A)$ and $a,b \in A$ such that $a\wedge b \in I$, then $ab \in I$ because $ab \leq a\wedge b$. Consequently $a\in I$ or $b\in I$, therefore $I \in Spec(A)$.
	\end{proof}
	\begin{Pro}\label{A/I} If $I \in  Id_{_W}(A)$ and $A$  is a $PMV_f$-algebra, then  $A/I$ is a $PMV_f$-algebra.
	\end{Pro}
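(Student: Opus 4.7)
The plan is to leverage Proposition \ref{cociente}, which already gives that $A/I$ is an $MVW$-rig, and merely check the two extra axioms of Definition \ref{PMVE}, namely $\overline{a}\,\overline{b}\leq \overline{a}\wedge \overline{b}$ and $\overline{a}(\overline{b}\ominus\overline{c})=\overline{a}\,\overline{b}\ominus \overline{a}\,\overline{c}$ for classes $\overline{a}=[a]_I,\overline{b}=[b]_I,\overline{c}=[c]_I\in A/I$. Since both conditions are equational once the order inequality is rewritten as $ab\ominus(a\wedge b)=0$, the natural strategy is to push the corresponding identities in $A$ through the canonical projection $\pi:A\to A/I$.

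First I would observe that $\pi$ is a surjective homomorphism of $MVW$-rigs. This follows from Proposition \ref{congruencias} (compatibility of $\equiv_I$ with $\oplus,\neg$ and product) together with the definition of the operations on $A/I$ via $[a]_I\oplus[b]_I=[a\oplus b]_I$, $\neg[a]_I=[\neg a]_I$ and $[a]_I[b]_I=[ab]_I$. As $\ominus$ and $\wedge$ are $MV$-term-definable from $\oplus$ and $\neg$ (explicitly $x\ominus y=x\odot\neg y$ and $x\wedge y=x\odot(\neg x\oplus y)$), the projection also commutes with $\ominus$ and $\wedge$.

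Next, since $A$ is a $PMV_f$-algebra, the identities $ab\ominus(a\wedge b)=0$ and $a(b\ominus c)=ab\ominus ac$ hold for all $a,b,c\in A$. Applying $\pi$ to both sides of each identity yields
\begin{equation*}
[a]_I[b]_I\ominus\bigl([a]_I\wedge[b]_I\bigr)=[0]_I \quad\text{and}\quad [a]_I\bigl([b]_I\ominus[c]_I\bigr)=[a]_I[b]_I\ominus[a]_I[c]_I
\end{equation*}
in $A/I$. By Affirmation \ref{orden}, the first equation is precisely $[a]_I[b]_I\leq[a]_I\wedge[b]_I$. Since every element of $A/I$ is of the form $[x]_I$ for some $x\in A$, the two defining axioms of $PMV_f$ are satisfied throughout $A/I$.

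There is no real obstacle in this argument; the only point that deserves explicit care is the verification that $\pi$ preserves $\wedge$ and $\ominus$, which reduces to the fact that these are $MV$-terms and $\pi$ is an $MV$-homomorphism. Once this is stated, the rest of the proof is the transport of the two identities along $\pi$, concluding that $A/I$ satisfies Definition \ref{PMVE} and is therefore a $PMV_f$-algebra.
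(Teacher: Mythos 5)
Your proof is correct and follows the same route as the paper, which simply cites Proposition \ref{congruencias} (compatibility of $\equiv_I$ with the operations) and Proposition \ref{cociente} ($A/I$ is an $MVW$-rig) and leaves the transport of the two extra $PMV_f$ identities along the projection implicit. You have merely spelled out that step explicitly, which is a faithful expansion of the paper's argument rather than a different one.
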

	\begin{proof}
		It follows directly from proposition \ref{congruencias} and proposition \ref{cociente}.
	\end{proof}
	
	\section{$l_u$-rings}\label{luanill}
	
	\begin{Def}[$l$-group \cite{Birkhoff,Cignoli}] A $l$-group $G$ is a lattice abelian group $\left(G,+,-,0\right)$, such that, the order $<$ is compatible with the sum. 
	\end{Def}
	\begin{Def}For each $x$ in an $l$-group $G,$  its absolute value is defined by $|x|=x^{+}+x^{-},$ where $x^+=x\vee 0$ is the positive part of $x$ and $x^-=-x\vee 0$ is the negative part. 
		
	\end{Def}
	\begin{Def} A strong unit $u$ of an $l$-group $G$ is an element $u$ such that $0\leq u\in G$	and for all $x\in G$ there exists an integer $n\geq 0$ with $|x|\leq nu$.
	\end{Def}
	An $l$-group with strong unit $u$ will be called an $l_u$-group.  
	\begin{Def}[$l$-ideal] An $l$-ideal of an $l$-group $G$ is a subgroup $J$ of $G$ that satisfies:  if $x\in J$ and $|y|\leq |x|$ then $y\in J.$
	\end{Def} 
	\begin{Def}[$l$-prime ideal]
		An $l$-ideal $P$ of an $l_u$-group $G$, is prime if and only if $G/P$ is a chain. 
		
		The set of all $l$-prime ideals of $G$ is called the spectrum of $G$ and denoted by $Spec_g(G).$ 
	\end{Def}
	\begin{Def}\label{lu}[\cite{Birkhoff},XVII.1]. An $l_u$-ring is a ring $R=(|R|, +,\cdot,\leq, u)$ such that $\left\langle R,+,\leq, u \right\rangle$ is an $l_u$-group and, $0\leq x,$ $0\leq y$ implies $0\leq xy,$ where $|R|$ denotes the subjacent set. 
	\end{Def}
	
	From this point on, all rings will be assumed to be commutative. 
	\begin{Def}\label{l-ideal}($L$-ideal [\cite{Birkhoff},XVII.3]). An $L$-ideal $I$ from an  $l$-ring $R$ is an $l$-ideal such that for every $y \in I$ and $x\in R$, $xy\in I.$   $I$ is called irreducible if and only if  $R/I$ is totally ordered. 
		
		The set of all $L$-ideals of $R$ is called $Id(R)$ and the set of all $l$-ideals of the subjacent group is called $Id_{g}(R).$  
	\end{Def}
	\begin{Def}[Low $l$-ring \cite{Montagna}] An $l$-ring is called low if and only if, for all $x,y\geq 0\in R$ we have that $xy\leq x\wedge y.$
	\end{Def}	
	\begin{Def}[Semi-low $l_u$-ring] \label{anillo-especial}An $l_u$-ring $R$ is semi-low if and only if,  for all $a,b\in [0,u]$, $ab\leq a\wedge b.$ 
	\end{Def}
	\begin{Teo}\label{generado} Given $R$ an $l$-ring and $u\in R,$ $u>0,$ and the segment  $\left[0,u\right]=\left\{a\in R\mid 0\leq a\leq u\right\}$ with $\left[0,u\right]^\sharp\subset R,$ the subring generated by $\left[0,u\right],$ then:
		\item [a)] For every  $A\subset[0,u],$ a  $PMV_f,$   $A^\sharp\subset [0,u]^\sharp,$ the subring generated by $A$,   $A^\sharp$ is a semi-low $l_u$-ring with strong unit $u$ and $A=\Gamma(A^\sharp, u).$  
		\item [b)] Every semi-low $l_u$-ring is generated by its segments,  $$[0,u]^\sharp=\left\{x\in R\mid \exists n\geq 0, |x|\leq nu\right\}.$$
	\end{Teo}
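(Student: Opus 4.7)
The plan is to prove the two parts separately, with part (b) being a decomposition argument in $l_u$-rings and part (a) reducing, via Chang's subdirect representation, to the case of $PMV_f$-chains handled by Theorem \ref{cnumeral}.

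For part (b), I would set $S := \{x \in R : \exists n \geq 0,\ |x| \leq nu\}$; since $u$ is a strong unit, $S = R$. The inclusion $[0,u]^\sharp \subseteq S$ follows because $S$ contains $[0,u]$ and is closed under addition and subtraction (via $|x+y| \leq |x|+|y|$) and under the ring product: in an $l$-ring $|xy| \leq |x|\,|y|$, so if $|x| \leq mu$ and $|y| \leq nu$ then $|xy| \leq mn\,u^2 \leq mn\,u$, using $u^2 \leq u$ from Proposition \ref{propiedades} (applicable because the semi-low hypothesis makes $[0,u]$ an $MVW$-rig, indeed a $PMV_f$-algebra). For the reverse inclusion, given $x \in R$ I write $x = x^+ - x^-$; for each non-negative part $y$ with $y \leq nu$, the telescoping decomposition $y = \sum_{i=1}^{n} y_i$ with $y_i := (y - (i-1)u)^+ \wedge u \in [0,u]$ realises $y$ as a finite sum of segment elements. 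Hence $x \in [0,u]^\sharp$.

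For part (a), I would first argue that $A^\sharp$ coincides with the $l$-subgroup of $R$ generated by $A$. Since $A$ is a $PMV_f$-algebra, $ab \leq a \wedge b \leq u$ for all $a,b \in A$, and because the $PMV_f$-product on $A$ is the restriction of the ring product of $R$, $A$ is already closed under multiplication; consequently the subring generated by $A$ collapses to the additive subgroup generated by $A$. By Chang's representation (Theorem \ref{repchang}) combined with Proposition \ref{espectros}, $A$ embeds into $\prod_{P \in Spec(A)} A/P$, a subdirect product of $PMV_f$-chains. For each chain $A/P$, Theorem \ref{cnumeral} supplies full product distributivity, and the Chang--Mundici construction yields a chain $l_u$-group $G_P$ with $\Gamma(G_P,u) = A/P$ that carries a compatible ring product extending that of $A/P$; since $ab \leq a \wedge b$ is preserved by the quotient, each $G_P$ is a semi-low $l_u$-ring. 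The subgroup $A^\sharp \subseteq R$ generated by $A$ is then identifiable with the corresponding subring of $\prod_P G_P$, so it inherits closure under the lattice operations and the semi-low property; $u$ is a strong unit because every generator lies in $[0,u]$, hence every element of $A^\sharp$ is bounded in absolute value by some $nu$. Finally, $\Gamma(A^\sharp,u) = A$: the inclusion $A \subseteq \Gamma(A^\sharp,u)$ is immediate, and the reverse follows because $A$ generates $A^\sharp$ as an $l$-group and is already closed under the induced $MV$-operations $\oplus$ and $\neg$, so it coincides with the segment of $A^\sharp$.

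The main obstacle lies in part (a): verifying that the subring generated by $A$ is closed under the lattice operations of $R$ (so that it is genuinely an $l$-subring), and that the ring product extends consistently from $A$ to the enveloping $l$-group. This is where the second $PMV_f$-axiom $a(b \ominus c) = ab \ominus ac$ is essential — it is the $MV$-algebraic shadow of full additive distributivity, and Theorem \ref{cnumeral} is precisely what lifts it to the chain-level $l_u$-ring setting, making the subdirect-product construction yield a legitimate semi-low $l_u$-ring.
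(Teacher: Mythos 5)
Your overall skeleton matches the paper's: in part (a) the decisive observation is that $A$, being a $PMV_f$-algebra, is closed under the ring product, so the subring generated by $A$ collapses to the additive subgroup generated by $A$, and the semi-low property is then read off from $xy\leq x\wedge y$ holding on $A=A^\sharp\cap[0,u]$; in part (b) the content is the telescoping decomposition $y=\sum_{i}\bigl((y-(i-1)u)\wedge u\vee 0\bigr)$ of a positive element bounded by $nu$, which is exactly what the paper imports from \cite{Yuri3} (Theorems 1.2-b and 1.5-c). Part (b) of your argument is essentially complete (the inclusion $[0,u]^\sharp\subseteq S$ is in fact automatic once you note $S=R$).

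The gap is in part (a), at the step where you pass to the subdirect product. You assert that the subgroup of $R$ generated by $A$ ``is identifiable with the corresponding subring of $\prod_P G_P$, so it inherits closure under the lattice operations.'' That identification is precisely what needs proof: $A$ generates one subgroup inside $R$ and another inside $\prod_P G_P$, and nothing said so far shows that these two quotients of the free abelian group on $A$ satisfy the same relations; moreover, even granting an abstract isomorphism, being isomorphic to a sublattice of some other $l$-group does not make a given subgroup of $R$ closed under $R$'s own $\vee$ and $\wedge$. The same issue recurs in your closing claim that since $A$ generates $A^\sharp$ and is closed under $\oplus$ and $\neg$ it ``coincides with the segment of $A^\sharp$'': the real work is showing that an element $\sum\epsilon_i a_i$ of the generated subgroup that happens to lie in $[0,u]$ actually belongs to $A$. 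Both points are exactly the content of Theorem 1.2-a of \cite{Yuri3} (the good-sequence-free identification of the enveloping $l_u$-group), which is what the paper cites at this spot; your detour through Theorem \ref{cnumeral} does not substitute for it, because that theorem constructs $(A/P)^\sharp$ abstractly rather than locating the subgroup generated by $A$ inside the ambient $R$. The repair is to invoke that result (or reprove its decomposition lemmas) in place of the subdirect-product argument.
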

	
	\begin{proof}
		\begin{itemize}
			\item [a)] Given $A=\left\langle|A|, \oplus, \cdot, \neg, 0 \right\rangle$ a $PMV_f$, then $A=\left\langle|A|, \oplus, \neg, 0 \right\rangle$ is an $MV$-algebra; call $A^*$ the associated $l_u$-group. Then the subjacent sets are equal $|A^\sharp|=|A^*|$, because for every $a \in A^\sharp$, $a= \sum \epsilon_i b_ic_i +\sum \delta_j d_j$, with $b_i, c_i, b_ic_i, d_j \in A$ and $\epsilon_i, \delta_j \in \{1,-1\} $,  is a sum of elements of $A$. Therefore $A=\Gamma(A^\sharp,u)$ because of  Theorem 1.2-a) of \cite{Yuri3}. On the other hand, $x,y \in A^\sharp \cap [0,u]$ implies $x,y \in A$ y $xy \leq x \wedge y$.
			\item [b)] From theorem 1.2-b) of \cite{Yuri3}, it follows that  $$[0,u]^*=\left\{x\in R\mid \exists n\geq 0, |x|\leq nu\right\}$$ is an $l_u$-group with strong unit  $u,$ and for the reasons exposed above, the subjacent sets are  $|R|=\left|[0,u]^*\right|=|[0,u]^\sharp|$, so  $R= [0,u]^\sharp$.
		\end{itemize} 
	\end{proof}
	
	\section{Equivalence between the categories $\mathcal{PMV}_f$ and $\mathcal{LR}_u.$}\label{equivcat}
	
	\begin{Def} We called  $\mathcal{PMV}_f$  and  $\mathcal{CPMV}_f$ to the categories whose objects are $PMV_f$-algebras and $PMV_f$-chains and homomorphisms between them respectively.
	\end{Def}
	
	\begin{Def} We called $\mathcal{LR}_u,$ and $\mathcal{C}\mathcal{LR}_u$ to the categories whose objects are semi-low $l_u$-rings and chain semi-low $l_u$-rings, and homomorphisms between them respectively. 
	\end{Def}
	
	\subsection{Categorical Equivalence between $\mathcal{CPMV}_f$ and  $\mathcal{C}\mathcal{LR}_u$}
	\begin{Teo}[Chang's construction of the $l_u$-group $A^*$ \cite{Chang2}, Lemma 5]\label{ecuchang}
		Given $A$ an $MV$-chain, $A^*=\left\langle \mathbb{Z}\times A,+,\leq \right\rangle$ together with the operations $(m+1, 0)=(m,u),$ \,\, $(-m-1,\neg a)=-(m,a),$ and  $(m,a)+(n,b)=(m+n,a\oplus b)$ if $a\oplus b\neq u$, or $(m,a)+(n,b)=(m+n+1,a\odot b)$ if $a \oplus b = u,$ is a chain $l_u$-group with strong unit $\mathbf{u}=(1,0)=(0,u),$ where the order $\leq$ is given by $(m,a)\leq(n,b)$ if and only if $m<n$ or $m=n$ and $a\leq b$.
	\end{Teo}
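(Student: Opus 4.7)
The plan is to follow Chang's construction directly, checking in turn: (i) the addition is well-defined on $\mathbb{Z}\times A$ modulo the identification $(m+1,0)\sim(m,u)$; (ii) $(A^*,+)$ is an abelian group; (iii) the relation $\leq$ is a total order; (iv) this order is translation-invariant; (v) $(1,0)$ is a strong unit. Intuitively, the first coordinate tracks integer ``overflow'' while the second stays as the ``fractional part'' inside $A$, and the identification $(m+1,0)\sim(m,u)$ encodes exactly this. I would work with canonical representatives $(m,a)$ with $a\in[0,u)$, observing that whenever $a,b<u$ one has $a\odot b\leq a<u$, so the overflow clause keeps the output in canonical form.

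Well-definedness on canonical forms and commutativity are immediate from the $MV$-identities $u\oplus b=u$, $u\odot b=b$ and the commutativity of $\oplus$ and $\odot$. For the inverse, I would compute $(m,a)+(-m-1,\neg a)$: since $a\oplus\neg a=u$ in every $MV$-algebra, the overflow clause applies and yields $(0,a\odot\neg a)=(0,\neg(\neg a\oplus a))=(0,\neg u)=(0,0)$; the degenerate case $a=0$ fits the same clause because $0\oplus u=u$ and $0\odot u=0$. Totality of $\leq$ is automatic from the lexicographic definition together with the fact that both $\mathbb{Z}$ and the $MV$-chain $A$ are chains. For the strong unit, from $(m,a)\leq(m,u)=(m+1,0)=(m+1)(1,0)$ and $-(m,a)=(-m-1,\neg a)\leq(-m,0)=(-m)(1,0)$ one obtains $|(m,a)|\leq(|m|+1)(1,0)$.

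The main obstacle is the joint verification of associativity of $+$ and of translation invariance, each of which splits into subcases according to which of the intermediate sums $a\oplus b$, $b\oplus c$, $(a\oplus b)\oplus c$, etc., reach $u$. Because $A$ is a chain, I would fix a WLOG ordering $a\leq b\leq c$ among the second coordinates, which collapses most cases, and use the chain identity $x\oplus y=u\Leftrightarrow y\geq\neg x$ together with the defining $MV$-axiom $\neg(\neg x\oplus y)\oplus y=\neg(\neg y\oplus x)\oplus x$ (which in a chain reduces to $x\vee y$) and De Morgan $\neg(x\oplus y)=\neg x\odot\neg y$. The integer coordinate then behaves as a counter of overflows, and matching both coordinates case by case yields associativity; the same techniques, combined with monotonicity of $\oplus$ and $\odot$ in each argument, close the translation-invariance check. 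The bookkeeping is tedious but finite, and totality of the chain ensures no case is left open.
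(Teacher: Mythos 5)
The paper does not prove this statement at all: it is imported verbatim as Lemma~5 of Chang's 1959 paper \cite{Chang2}, and the text uses it as a black box (only downstream results such as Theorem~\ref{cnumeral} and Theorem~\ref{Equivalencia-cadenas} are proved here). So there is nothing in the paper to compare against; what your sketch reconstructs is Chang's original argument, and as an outline it is sound. The canonical-representative device (representatives $(m,a)$ with $a\in[0,u)$, noting $a\odot b\leq a<u$ so both clauses of the addition return canonical forms), the inverse computation via $a\oplus\neg a=u$ and $a\odot\neg a=\neg(\neg a\oplus a)=0$, the lexicographic totality, the reduction of translation invariance to monotonicity of $\oplus$ and $\odot$, and the bound $|(m,a)|\leq(|m|+1)\mathbf{u}$ for the strong unit are all correct and are exactly the right reductions.

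Two caveats. First, the step ``fix WLOG $a\leq b\leq c$'' for associativity is not free: commutativity lets you permute within the pair that is summed first and swap the outer order, but it cannot change \emph{which} pair is summed first, so sorting the second coordinates does not by itself reduce $(x+y)+z=x+(y+z)$ to one configuration. The correct formulation is: for a sorted triple, show that all three pairings $\{x,y\}$, $\{y,z\}$, $\{x,z\}$ produce the same total; general associativity then follows by commutativity. Second, the real content of Chang's lemma is precisely the overflow case analysis that your proposal only promises --- the bookkeeping of carries and the matching of second coordinates through identities relating $(a\oplus b)\odot c$ and $a\odot(b\oplus c)$ in a chain. Deferring it is reasonable for a classical cited result, but be aware that this is where essentially all of the work lives, and your outline as written has not yet discharged it.
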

	
	\begin{Obs}\label{universo} Denote $|A^*|=\mathbb{Z}\times A.$
	\end{Obs}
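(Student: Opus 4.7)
The statement labelled \texttt{universo} is not a theorem requiring argument but a notational convention: having just recalled Chang's construction of $A^{*}$ as the chain $l_u$-group whose carrier is $\mathbb{Z}\times A$ equipped with the operations spelled out in Theorem \ref{ecuchang}, the observation merely records that from now on we will write $|A^{*}|$ for that underlying set $\mathbb{Z}\times A$. So my plan is to treat this as a definitional remark rather than a proposition with hypothesis and conclusion.

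Concretely, the only thing that needs to be checked to make the notation legitimate is that the preceding theorem genuinely exhibits $A^{*}$ as a structure whose carrier set is $\mathbb{Z}\times A$. This is immediate from the statement of Theorem \ref{ecuchang}, which defines $A^{*}=\langle \mathbb{Z}\times A,+,\le\rangle$. Since $|\cdot|$ is the standard notation for the underlying set of an algebraic structure, the equality $|A^{*}|=\mathbb{Z}\times A$ is tautological.

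Therefore, I would not attempt to write a formal proof. Instead, I would justify the notation in one sentence, perhaps noting that it will be convenient in later sections (for example, when defining the product on $A^{*}$ for $PMV_f$-chains in Theorem \ref{cnumeral}, and when transporting ideals of the $MV$-chain to $l$-ideals of $A^{*}$ in the categorical equivalence of Section \ref{equivcat}) to refer to elements of $A^{*}$ explicitly as pairs $(m,a)\in\mathbb{Z}\times A$. No obstacle arises, since nothing has to be proved; the only risk is rhetorical, namely making sure the reader understands that \textbf{Observation} \ref{universo} is being used as a definition of notation and not as a substantive claim about the structure $A^{*}$.
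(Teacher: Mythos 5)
Your reading is exactly right: Observation \ref{universo} is purely a notational convention, the paper itself offers no proof, and the identity $|A^{*}|=\mathbb{Z}\times A$ is immediate from the carrier given in Theorem \ref{ecuchang}. Your treatment matches the paper's, so nothing further is needed.
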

	\subsubsection{The functor $(-)^\sharp \colon \mathcal{CPMV}_f \to \mathcal{C}\mathcal{LR}_u$} 
	\begin{Def}\label{defanillo}
		Given $A$ a $PMV_f$-chain, the structure $A^\sharp$ is define following the Chang's construction \cite{Chang2} as follows. $A^\sharp=\left\langle |A^*|,+,\cdot,\leq\right\rangle,$ with $ \left\langle |A^*|,+, \mathbf{u}, \leq\right\rangle,$ its associated $l_u$-group where the product is defined by $$ (m,a)\cdot (n,b):=mn(0,u^2)+m(0,bu)+n(0,au)+(0,ab).$$\\ with  $n(0,x)=\underbrace{(0,x)+\ldots +(0,x)}_{n-times}$ for $n\geq 0$ and $n(0,x)=\underbrace{-(0,x)-\ldots -(0,x)}_{n-times}$ for $n<0.$
	\end{Def}
	\begin{Pro} 
		The product defined above is well defined. 
	\end{Pro}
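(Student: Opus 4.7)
The plan is to exploit the fact that, in the Chang construction of Theorem \ref{ecuchang}, the set $|A^*|=\mathbb{Z}\times A$ carries a single genuine identification, namely $(m+1,0)=(m,u)$; the rule $(-m-1,\neg a)=-(m,a)$ is simply the definition of the additive inverse and does not introduce additional relations to verify. Thus I need to check two things: first, that for any representatives $(m,a),(n,b)$ the right-hand side of the defining formula lies in $|A^*|$; second, that substituting one representative of a given element for an equivalent one does not change the output.

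For the first point, I use that $A$ is closed under the product and that the $PMV_f$-axiom $xy\leq x\wedge y$ forces $ab,\,au,\,bu,\,u^2\in[0,u]=A$, so the quantities $(0,ab),(0,au),(0,bu),(0,u^2)$ are genuine elements of $|A^*|$. Since $A^*$ is already an abelian group by Theorem \ref{ecuchang}, the expression
\[
mn(0,u^2)+m(0,bu)+n(0,au)+(0,ab)
\]
makes unambiguous sense, where $n(0,x)$ denotes the $n$-fold sum, or its inverse if $n<0$, in $A^*$.

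For the second point, I observe that the formula is manifestly symmetric in $(m,a)$ and $(n,b)$ by commutativity of the product in $A$ and of addition in $A^*$, so it suffices to check invariance in the left coordinate, namely
\[
(m,u)\cdot(n,b)=(m+1,0)\cdot(n,b).
\]
I expand both sides with the defining formula. Substituting $(m,u)$ and using $u\cdot u=u^2$, $u\cdot b=bu$, the left side collapses to $(mn+n)(0,u^2)+(m+1)(0,bu)$; substituting $(m+1,0)$ and using $0\cdot x=0$, the right side collapses to the same expression. By symmetry, this settles the right coordinate as well.

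The verification is essentially bookkeeping of integer multiples inside the abelian group $A^*$, so I do not anticipate any real obstacle beyond careful handling of signs in the scalar-multiple terms. The main conceptual point worth flagging is that the negation rule $(-m-1,\neg a)=-(m,a)$ is a \emph{definition} rather than an extra identification to be checked, which is what confines the verification to the single case above.
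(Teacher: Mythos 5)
Your proof is correct and takes essentially the same approach as the paper: the paper likewise reduces well-definedness to the single identification $(m+1,0)=(m,u)$ in $A^*$ and observes that $(m,u)\cdot(n,b)=(m+1,0)\cdot(n,b)$ follows directly from the defining formula. You simply carry out the expansion (both sides collapse to $(mn+n)(0,u^2)+(m+1)(0,bu)$) that the paper leaves implicit.
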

	\begin{proof}
		Note that in $A^*,$  $(m+1,0)=(m,u);$ it is enough then to observe directly from the definition of the product that  $(m,u)\cdot (n,b)= (m+1, 0)\cdot (n,b)$.
	\end{proof}
	\begin{Afir}\label{disodot}
		For every $x,y, z \in A$, $$(0,x)(0,y\odot z)=(0,xy)+(0,xz)-(0,xu).$$
	\end{Afir}
	\begin{proof}
		The equality follows directly from theorem \ref{contenencias} and definition \ref{defanillo}, if $y \odot z =0$. On the other hand,   $y\odot z\neq 0 \Longleftrightarrow \neg y\oplus \neg z\neq u,\Longleftrightarrow \neg y\odot \neg z=0,$ implies 	
		$$\begin{array}{rcl}
		(0,x)(0,y\odot z)	&=&(0,x)(0,\neg(\neg y\oplus \neg z))\\
		&=& (0,x)\left[-(-1,\neg y\oplus \neg z)\right]\\
		&=&-(0,x)(-1,\neg y\oplus \neg z)\\
		&=& -(0,x)\left[(0,\neg y)+(-1,\neg z)\right]\\
		&=& -(0,x)(0,\neg y)-(0,x)(-1,\neg z)\\
		&=& (0,x)\left[-(0,\neg y)\right]+(0,x)\left[-(-1,\neg z)\right]\\
		&=&(0,x)(-1,y)+(0,x)(0,z)\\
		&=&(0,xy)-(0,xu)+(0,xz).
		\end{array}$$
		
	\end{proof}
	\begin{Teo}\label{cnumeral}
		Given  $A$ a $CPMV_f$, $A^\sharp$ is a chain semi-low $l_u$-ring. 
	\end{Teo}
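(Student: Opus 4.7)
The goal is to verify that the data $(|A^*|, +, \cdot, \leq)$ forms a chain semi-low $l_u$-ring. The underlying chain $l_u$-group structure is supplied by Theorem \ref{ecuchang}, so what remains is to check that the product of Definition \ref{defanillo} is commutative and associative, distributes over $+$, preserves the positive cone, and satisfies the semi-low inequality on $[0,\mathbf{u}]$.

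Commutativity is immediate from the symmetry of the defining formula in $((m,a),(n,b))$ together with commutativity of $\cdot$ in $A$. Closure of the positive cone under $\cdot$ is also immediate: a positive element of $A^*$ has non-negative integer coordinate, so when $m, n \geq 0$ each of $mn(0,u^2)$, $m(0,bu)$, $n(0,au)$, $(0,ab)$ is non-negative and their sum is non-negative. The semi-low condition is the easiest piece: $[0, \mathbf{u}]$ consists exactly of the elements $(0,a)$ with $a \in A$, and by the defining formula $(0,a) \cdot (0,b) = (0, ab)$; the $PMV_f$ axiom $ab \leq a \wedge b$ then gives $(0,ab) \leq (0, a \wedge b) = (0,a) \wedge (0,b)$.

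The substance of the proof is distributivity, $(m,a)[(n,b)+(p,c)] = (m,a)(n,b)+(m,a)(p,c)$, which splits by the defining formula of $+$ in Chang's $A^*$. When $b \oplus c \neq u$ one has $b \odot c = 0$, and the $PMV$-property recalled in Theorem \ref{contenencias} gives the identity $x(b \oplus c) = xb \oplus xc$ for every $x \in A$; since semi-low yields $xb \odot xc \leq b \odot c = 0$, these sums carry no integer unit, and specializing $x = u$ and $x = a$ one sees term by term that both sides agree. When $b \oplus c = u$, an extra $+1$ appears in the integer coordinate since $(n,b)+(p,c) = (n+p+1, b \odot c)$. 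If additionally $b \odot c \neq 0$, Claim \ref{disodot} applied with $x = u$ and with $x = a$ rewrites $(0, u(b\odot c))$ and $(0, a(b\odot c))$ so as to produce exactly the $(0,u^2)$ and $(0,au)$ summands required to absorb the carry. The remaining degenerate subcase $b \odot c = 0$, $b \oplus c = u$ forces $b = \neg c$; here Claim \ref{disodot} cannot be invoked, but $b + c = u$ together with the $PMV$ distributivity yields $bu \oplus cu = u^2$ and $ab \oplus ac = au$ (both without carry), producing the same cancellation by hand.

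With distributivity in place, associativity follows by bilinearity: expanding $((m,a)(n,b))(p,c)$ via right-distributivity over the four summands in the formula for $(m,a)(n,b)$, and computing each $(0,\cdot)(p,c)$ from the definition, both sides collapse to the symmetric expression
\begin{equation*}
mnp(0,u^3) + mn(0,u^2 c) + mp(0,u^2 b) + np(0,u^2 a) + m(0,ubc) + n(0,uac) + p(0,uab) + (0,abc),
\end{equation*}
using associativity of $\cdot$ in $A$. The hardest step is the degenerate subcase of distributivity with $b \oplus c = u$ and $b \odot c = 0$, since Claim \ref{disodot} does not cover it and the cancellation has to be reproduced directly from the $PMV$ distributivity law for disjoint pairs.
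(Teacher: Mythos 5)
Your proof is correct and follows essentially the same route as the paper: distributivity is checked by cases on Chang's sum using the disjoint-distributivity from Theorem \ref{contenencias} together with Affirmation \ref{disodot} for the carry, associativity is obtained by expanding against the explicit product formula until both sides reach the symmetric expression, and positivity and the semi-low inequality are read off from $(0,a)(0,b)=(0,ab)$. The only divergence is bookkeeping: the paper splits on $b\odot c=0$ versus $b\odot c\neq 0$ rather than on $b\oplus c=u$, so your degenerate subcase ($b\oplus c=u$, $b\odot c=0$) is silently absorbed into its first case via the identification $(k+1,0)=(k,u)$; your separate hand computation of that subcase is correct but redundant under that organization.
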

	\begin{proof}
		It is clear that  $A^\sharp$ with the sum operation and the associated order is a chain $l_u$-group. It is enough to show that with the product given by definition \ref{defanillo}, it is a semi-low ring. 
		
		For every $(m,x),(n,y),(s,z)\in A^\sharp,$ the following properties hold:
		
		\textbf{ Distributivity}
		$$(m,x)[(n,y)+(s,z)]=(m,x)(n,y)+(m,x)(s,z).$$
		Because of the theorems \ref{contenencias} and \ref{ecuchang}, $z\odot y = 0$ implies $xz \odot xy=0$, and  $x(z \oplus y)= xz \oplus xy$, so, $(m,z)+(n,y)=(m+n, z\oplus y)$ and $(m,xz)+(n,xy)=(m+n, xz\oplus xy)$.
		
		This affirmation will be proved dividing the proof in two cases.
		
		\textbf{Case 1.}  $y\odot z=0.$\\[7pt]
		$\begin{array}{rcl}
		(m,x)[(n,y)+(s,z)]&=&(m,x)[(n+s,y\oplus z)]\\[5pt]
		&=&m(n+s)(0,u^2)+m(0,(y\oplus z)u)+(n+s)(0,xu)\\[5pt]
		&&+(0,x(y\oplus z))\\[5pt]
		&=&mn(0,u^2)+ms(0,u^2)+m(0,yu\oplus zu)+n(0,xu)\\[5pt]
		&&+s(0,xu)+(0,xy\oplus xz)\\[5pt]
		&=&mn(0,u^2)+ms(0,u^2)+m(0,yu)+m(0,zu)\\[5pt]
		&&+n(0,xu)+s(0,xu)+(0,xy)+(0,xz)\\[5pt]
		&=&\big[mn(0,u^2)+m(0,yu)+n(0,xu)+(0,xy)\big]\\[5pt]
		&&+\big[ms(0,u^2)+m(0,zu)+s(0,xu)+(0,xz)\big]\\
		&=&(m,x)(n,y)+(m,x)(s,z).\\[10pt]
		\end{array}$
		
		From theorem \ref{ecuchang},  $y\odot z\neq 0$  implies, $(n,y)+(s,z)=(n+s+1, y\odot z)$, and form affirmation \ref{disodot}, $(0,x)(0,y\odot z)=(0,x(y\odot z))=(0,xy)+(0,xz)-(0,xu).$ 
		
		\textbf{Case 2.} $y\odot z\neq 0$\\[7pt]	
		$\begin{array}{lcl}
		(m,x)[(n,y)+(s,z)]&=&(m,x)[(n+s+1,y\odot z)]\\
		&=&m(n+s+1)(0,u^2)+m(0,(y\odot z)u)\\
		&& +(n+s+1)(0,xu)+(0,x(y\odot z))\\
		&=&m(n+s+1)(0,u^2)+m\big[(0,yu)+(0,zu)-(0,u^2)\big]\\
		&&+(n+s+1)(0,xu)+\big[(0,xy)+(0,xz)-(0,xu)\big]\\
		&=&mn(0,u^2)+ms(0,u^2)+m(0,u^2)+m(0,yu)\\
		&&+m(0,zu)-m(0,u^2) +n(0,xu)+s(0,xu)\\
		&&+(0,xu)+(0,xy)+(0,xz)-(0,xu)\\
		&=&\big[mn(0,u^2)+m(0,yu)+n(0,xu)+(0,xy)\big]\\
		&&+\big[ms(0,u^2)+m(0,zu)+s(0,xu)+(0,xz)\big]\\
		&=&(m,x)(n,y)+(m,x)(s,z).\\
		\end{array}$
		
		\textbf{ Associativity}
		$$\begin{array}{rcl} (m,x)\big[(n,y)(s,z)\big]&=&(m,x)\big[ns(0,u^2)+n(0,zu)+s(0,yu)+(0,yz)\big]\\
		&=&mns(0,u^3)+ns(0,xu^2)+mn(0,zu^2)+n(0,xzu)\\
		&&+ms(0,yu^2)+s(0,xyu)+ m(0,yzu)+(0,xyz) \\
		&=&mns(0,u^3)+mn(0,zu^2)+ms(0,yu^2)+m(0,zyu)\\
		&&+ ns(0,xu^2)+n(0,xuz) + s(0,xyu)+(0,xyz)\\ &=&\big[mn(0,u^2)+m(0,yu)+n(0,xu)+(0,xy)\big](s,z)\\
		&=&\big[(m,x)(n,y)\big](s,z).
		\end{array}$$
		
		Given $(0,0)\leq (m,x)$ and $(0,0)\leq (n,y)$ it is clear that $0\leq m$ and $0\leq n,$ so $$(0,0)\leq mn(0,u^2)+m(0,yu)+n(0,xu)+(0,xy)=(m,x)(n,y).$$
		Now it can be proved that $A^\sharp$ is semi-low.\\ Given $(0,0)\leq(m,x),(n,y)\leq(0,u)$ it must be that  $m=n=0,$ so $$(0,x)(0,y)=(0,xy)\leq (0,x\wedge y)=(0,x)\wedge(0,y).$$
	\end{proof}
	
	\begin{Cor}\label{productocadena}	For every $PMV_f$-chain $A$, 
		$$\displaystyle\left(\sum_{i=1}^{n}(0,x_i)\right)\left(\sum_{i=1}^{n}(0,y_i)\right)= \sum_{i=1}^{n}(0,x_iy_i),$$ in $A^\sharp$.
	\end{Cor}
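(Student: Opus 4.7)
The plan is to derive this identity by straightforward induction on $n$, relying on two results already in place: the distributivity of the product on $A^\sharp$ established in Theorem \ref{cnumeral}, and the base formula $(0,x)(0,y) = (0,xy)$, which follows directly from Definition \ref{defanillo} by setting both integer components to zero so that the three correction terms $mn(0,u^2)$, $m(0,yu)$, and $n(0,xu)$ all vanish.

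For the inductive argument, the case $n=1$ is exactly this base formula. For the step from $n$ to $n+1$, I would split
\[
\sum_{i=1}^{n+1}(0,x_i) = \sum_{i=1}^{n}(0,x_i) + (0,x_{n+1}),
\]
and similarly for the $y$-sum, then apply distributivity (both on the left and on the right, which is legitimate since the product is commutative by construction) to expand the product of these two decomposed sums. The resulting mixed products $(0,x_i)(0,y_j)$ collapse by the base formula to elements of the form $(0,x_iy_j)$ lying entirely in the $\{0\}\times A$ component, after which the inductive hypothesis applies to the $n$-term partial sums and the remaining summands can be reassembled in the form claimed.

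The main obstacle is really only bookkeeping. All of the non-trivial content, namely associativity and distributivity of $A^\sharp$ together with the fact that products of elements of $\{0\}\times A$ remain in $\{0\}\times A$, has already been carried out in Theorem \ref{cnumeral}. The corollary is thus a clean packaging of those facts into a summation identity that will later be convenient when manipulating the functor $(-)^\sharp$ across the categorical equivalence.
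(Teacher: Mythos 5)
Your strategy --- reduce to the base identity $(0,x)(0,y)=(0,xy)$ (the $m=n=0$ case of Definition \ref{defanillo}) and then distribute --- is exactly the derivation the paper intends; the corollary is stated there with no proof, as an immediate consequence of Theorem \ref{cnumeral}. The trouble is your last step, where you say the resulting summands ``can be reassembled in the form claimed.'' They cannot. Distributing $\bigl(\sum_{i=1}^{n}(0,x_i)\bigr)\bigl(\sum_{j=1}^{n}(0,y_j)\bigr)$ produces the $n^2$ terms $\sum_{i,j=1}^{n}(0,x_iy_j)$, whereas the right-hand side of the statement has only the $n$ diagonal terms $\sum_{i=1}^{n}(0,x_iy_i)$. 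In your inductive step the $2n$ cross terms $\bigl(\sum_{i\leq n}(0,x_i)\bigr)(0,y_{n+1})$ and $(0,x_{n+1})\bigl(\sum_{j\leq n}(0,y_j)\bigr)$ neither vanish nor collapse into the single new term $(0,x_{n+1}y_{n+1})$, so the induction does not close for the identity as printed. Indeed, as printed the identity fails: in $A=[0,1]$ (so $A^\sharp\cong\R$ via $\upsilon(m,a)=m+a$), take $n=2$ and $x_1=x_2=y_1=y_2=\tfrac{1}{2}$; the left side is $(1,0)(1,0)=(1,0)$, i.e.\ the real number $1$, while the right side is $(0,\tfrac14)+(0,\tfrac14)=(0,\tfrac12)$, i.e.\ $\tfrac12$.

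What your argument does prove, correctly and with no further ideas needed, is the double-sum version
\[
\left(\sum_{i=1}^{n}(0,x_i)\right)\left(\sum_{j=1}^{m}(0,y_j)\right)=\sum_{i=1}^{n}\sum_{j=1}^{m}(0,x_iy_j),
\]
and that is the form the paper actually relies on afterwards: the product on $|A^*|$ in Affirmation \ref{anum} is defined precisely as a sum over all pairs $(i,j)$. So the correct repair is not to force your induction to yield the diagonal sum, but to read (or restate) the corollary with the double index; once that is done, your induction goes through exactly as you outline, with the cross terms accounted for rather than absorbed.
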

	\begin{Pro}\label{numfun} $(-)^\sharp$ is functorial.
	\end{Pro}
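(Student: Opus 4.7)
The plan is to specify $(-)^\sharp$ on morphisms in the obvious way and verify the categorical axioms, with the only substantial step being preservation of the product. Given a morphism $f\colon A\to B$ in $\mathcal{CPMV}_f$, I would define
$$f^\sharp\colon A^\sharp\longrightarrow B^\sharp,\qquad f^\sharp(m,a):=(m,f(a)).$$
This is well defined because $|A^\sharp|=\mathbb{Z}\times A$ and $|B^\sharp|=\mathbb{Z}\times B$ by Observation \ref{universo}, and $f$ sends $A$ into $B$.

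First I would show that $f^\sharp$ is a homomorphism of $l_u$-groups. This part is already covered by the Chang/Dubuc--Poveda construction underlying Theorem \ref{ecuchang}: since $f$ is an $MV$-homomorphism it preserves $\oplus$, $\neg$, and $0$, hence also $u=\neg 0$ and the condition ``$a\oplus b=u$'' in the case split defining $+$ on $A^\sharp$. Consequently $f^\sharp$ preserves $+$, $-$, the order, and sends the strong unit $(1,0)$ to $(1,0)$.

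The main step, and the only real obstacle, is to show that $f^\sharp$ preserves the product of Definition \ref{defanillo}. Using that $f$ is a homomorphism of $MVW$-rigs, one has $f(u^2)=u_B^2$, $f(au)=f(a)u_B$, $f(bu)=f(b)u_B$, and $f(ab)=f(a)f(b)$. Since $f^\sharp$ is already a group homomorphism, it commutes with the scalar action $k(0,-)$, and therefore
$$
\begin{aligned}
f^\sharp\bigl((m,a)(n,b)\bigr)
&= f^\sharp\bigl(mn(0,u^2)+m(0,bu)+n(0,au)+(0,ab)\bigr)\\
&= mn(0,u_B^2)+m(0,f(b)u_B)+n(0,f(a)u_B)+(0,f(a)f(b))\\
&= (m,f(a))\cdot(n,f(b))= f^\sharp(m,a)\cdot f^\sharp(n,b).
\end{aligned}
$$
The slightly delicate point here is simply tracking the integer coordinate through the formula; no new identity in $A$ or $B$ is needed beyond what a $PMV_f$-homomorphism already provides.

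Finally, functoriality axioms are immediate from the definition: $(\mathrm{id}_A)^\sharp(m,a)=(m,a)=\mathrm{id}_{A^\sharp}(m,a)$, and for composable $f\colon A\to B$, $g\colon B\to C$,
$$(g\circ f)^\sharp(m,a)=(m,g(f(a)))=g^\sharp(m,f(a))=g^\sharp\bigl(f^\sharp(m,a)\bigr),$$
so $(g\circ f)^\sharp=g^\sharp\circ f^\sharp$. This completes the verification.
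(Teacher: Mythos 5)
Your proof is correct and takes essentially the same approach as the paper: the same definition $h^\sharp(m,a)=(m,h(a))$, the same appeal to the underlying Chang/Dubuc--Poveda construction for the $l_u$-group part, and the same verification of product preservation by expanding the formula of Definition~\ref{defanillo} (a computation the paper asserts ``follows directly'' but which you usefully write out, noting $f(u)=u$ since $f$ is an $MV$-homomorphism). Nothing further is needed.
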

	For $h:A\rightarrow B$ in $\mathcal{CPMV_E},$ define $h^\sharp:A^\sharp \rightarrow B^\sharp$ en $\mathcal{C}\mathcal{LR}_u$ as follows:
	$h^\sharp (m,a):=(m,h(a)).$ By construction (see [\cite{Yuri3}, 2.2]), $h^\sharp$   is a homomorphism of $l_u$-groups, so it is enough to prove that $h^\sharp$ is a homomorphism of $l_u$-rings. This follows directly from definition \ref{defanillo}. Namely, $h^\sharp[(m,a)(n,b)]=h^\sharp(m,a)h^\sharp(n,b).$
	
	Besides, given  $(m,a)\in A^\sharp,$ $ (gh)^\sharp(m,a)=(m,gh(a))=g^\sharp(m,h(a))=g^\sharp\circ h^\sharp(m,a).$
	
	\subsubsection{The Functor $\Gamma \colon \mathcal{C}\mathcal{LR}_u \to \mathcal{CPMV}_f$}
	
	\begin{Def}
		For $(R,u)$ a chain semi-low $l_u$-ring, define $\Gamma (R,u)=\{ x\in R\mid 0\leq x\leq u\}$
		together with the operations $x\oplus y= (x+y)\wedge u,$ $\neg x= u-x$ and, $x\cdot y =xy.$ The multiplication is well defined because  $xy \leq x\wedge y\leq u.$ 
	\end{Def}
	\begin{Pro}[\cite{DiNola2},3.2]\label{Gamma} Given $(R,u)$ an $l_u$-ring that satisfies $u^2 \leq u$, $\Gamma(R,u)$ is a $PMV$-algebra.
	\end{Pro}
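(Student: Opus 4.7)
The plan is to rely on the Mundici correspondence (via Chang's construction, Theorem \ref{ecuchang}) for the underlying $MV$-algebra structure, and then verify the two $PMV$ axioms directly in the ring, using $u^2\leq u$ at each point where we need to keep products inside the segment $[0,u]$.

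First I would check that $\Gamma(R,u)$ with $\oplus$, $\neg$, $0$ is an $MV$-algebra: this is the classical Mundici theorem applied to the $l_u$-group reduct of $R$, so it can be invoked without reproof. Next I would observe that the product is well defined on $[0,u]$, since if $0\leq x,y\leq u$ then $0\leq xy$ (positivity of the ring product on positive elements, by Definition \ref{lu}) and $xy\leq uy\leq u^2\leq u$. Associativity and commutativity of $\cdot$ on $\Gamma(R,u)$ are inherited from $R$, so $(\Gamma(R,u),\cdot)$ is a commutative semigroup and $\Gamma(R,u)$ is an $MV$-algebra with product.

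The crucial step is translating the $MV$-algebraic hypothesis $a\odot b=0$ into the ring. Under Mundici's correspondence, $a\odot b=(a+b-u)\vee 0$, so $a\odot b=0$ is equivalent to $a+b\leq u$. With this in hand, the two $PMV$ axioms of Definition \ref{PMV} both reduce to a one-line calculation:
\begin{itemize}
\item[$i)$] If $a+b\leq u$ and $c\in[0,u]$, then $ac+bc=(a+b)c\leq uc\leq u^2\leq u$, so $ac\odot bc=0$.
\item[$ii)$] If $a+b\leq u$, then $a\oplus b=(a+b)\wedge u=a+b$ and, by the same bound $ca+cb\leq uc\leq u^2\leq u$, also $ca\oplus cb=ca+cb$; by ring distributivity $c(a+b)=ca+cb$, hence $c(a\oplus b)=ca\oplus cb$.
\end{itemize}

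The only real obstacle is making sure that whenever we want to replace $(x+y)\wedge u$ by $x+y$, we have $x+y\leq u$, and whenever a ring product is computed we stay in the segment. Both reductions are controlled by the single inequality $u^2\leq u$, which is exactly the hypothesis of the proposition; all remaining manipulations are routine lattice-ordered ring arithmetic.
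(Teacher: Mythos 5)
Your argument is correct and complete. Note, however, that the paper does not actually prove this proposition: it is imported as a citation of Di Nola--Dvure\v{c}enskij (Theorem 3.2 of that reference), so there is no internal proof to compare against. Your direct verification is the standard one and it holds up: the translation $a\odot b=(a+b-u)\vee 0$, hence $a\odot b=0\Leftrightarrow a+b\leq u$, is exactly the right bridge between the $MV$-level hypotheses and the ring arithmetic, and every inequality you need ($xy\leq uy\leq u^2\leq u$, $(a+b)c\leq uc\leq u^2\leq u$) follows from positivity of products of positive elements together with the single hypothesis $u^2\leq u$, via $x\leq u,\ y\geq 0\Rightarrow (u-x)y\geq 0\Rightarrow xy\leq uy$. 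Since the paper's Definition \ref{PMV} asks only for an $MV$-algebra with a semigroup product satisfying the two implications, and you have checked closure of the product in $[0,u]$, inheritance of associativity and commutativity, and both implications, nothing is missing. The one thing your write-up buys over the paper's bare citation is that it makes visible precisely where $u^2\leq u$ is used, which is also the hypothesis that separates this general statement from the semi-low case treated elsewhere in the paper (where the stronger bound $xy\leq x\wedge y$ is available).
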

	
	\begin{Obs} If  $R$ is a chain semi-low $l_u$-ring, then $x(y\vee z)=xy\vee xz.$ In fact, it can be assume without loss of generality that  $y\leq z.$ Then  $x(y\vee z)=xz=xy\vee xz$. A similar statement for the infimum is true. Consequently, in this case  $x(y \ominus z)=xy \ominus xz$.
	\end{Obs}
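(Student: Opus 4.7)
The plan is to use two ingredients: $R$ is totally ordered (a chain), and multiplication by a non-negative element is order preserving in any $l_u$-ring. Since the consequence involves the $MV$-operation $\ominus$, I read the statement as taking place inside $\Gamma(R,u)=[0,u]$, so that $x,y,z\geq 0$; otherwise the identity can fail, as multiplication by a negative element would swap $\vee$ and $\wedge$.

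First I would prove $x(y\vee z)=xy\vee xz$. Because $R$ is a chain, $y$ and $z$ are comparable, and by symmetry of $\vee$ I may assume $y\leq z$, so $y\vee z=z$. The $l_u$-ring axiom $0\leq a,\ 0\leq b\Rightarrow 0\leq ab$ applied to $x$ and $z-y$ gives $xy\leq xz$, hence $xy\vee xz=xz=x(y\vee z)$. The mirror argument, with $y\wedge z=y$ and $xy\wedge xz=xy$, yields $x(y\wedge z)=xy\wedge xz$.

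From these, the identity $x(y\ominus z)=xy\ominus xz$ drops out in one line. Inside the underlying $l_u$-ring, truncated subtraction on $[0,u]$ is $y\ominus z=(y-z)\vee 0$, so
\[
x(y\ominus z)=x\bigl((y-z)\vee 0\bigr)=x(y-z)\vee x\cdot 0=(xy-xz)\vee 0=xy\ominus xz,
\]
combining the ring distributivity $x(y-z)=xy-xz$ with the distributivity over $\vee$ just established and $x\cdot 0=0$.

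I do not foresee a real obstacle: the chain hypothesis collapses each lattice identity to a single case split on comparability, and the $l_u$-ring structure supplies the needed order-preservation of multiplication by non-negatives. The only point requiring mild care is making the implicit restriction to $[0,u]$ explicit, since without $x\geq 0$ the distributivity of multiplication over $\vee$ fails; the $\Gamma$-context in which the $\ominus$-consequence lives is doing genuine work here.
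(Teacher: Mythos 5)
Your proposal is correct and follows essentially the same route as the paper: a case split on the comparability of $y$ and $z$ granted by the chain hypothesis, with monotonicity of multiplication by the non-negative $x$ giving $xy\vee xz=xz$, and the $\ominus$-identity obtained from $y\ominus z=(y-z)\vee 0$ together with ring distributivity. You merely make explicit the details the paper leaves implicit, in particular the restriction to $[0,u]$ and the use of the positivity axiom.
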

	\begin{Cor}\label{gammacadena}
		For every $R$ a chain semi-low  $l_u$-ring, $\Gamma(R,u)$ is an $\mathcal{PMV}_f$.
	\end{Cor}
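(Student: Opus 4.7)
The strategy is to bootstrap from the results already proved. Since $R$ is semi-low, applying the inequality $ab\leq a\wedge b$ with $a=b=u$ yields $u^2\leq u$, so Proposition \ref{Gamma} shows that $\Gamma(R,u)$ is a $PMV$-algebra. By Theorem \ref{contenencias} every $PMV$-algebra is an $MVW$-rig, so to reach $PMV_f$ it suffices to verify the two extra axioms of Definition \ref{PMVE} on elements of $[0,u]$: the inequality $ab\leq a\wedge b$ and the identity $a(b\ominus c)=ab\ominus ac$.

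The first axiom is immediate: for $a,b\in\Gamma(R,u)=[0,u]$, semi-lowness (Definition \ref{anillo-especial}) gives exactly $ab\leq a\wedge b$.

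For the second axiom, the key preliminary step is to identify the $MV$-theoretic $\ominus$ on $\Gamma(R,u)$ with the $l$-group truncated subtraction on the segment. Unfolding $x\ominus y=\neg(\neg x\oplus y)\ominus y$ through $\neg z=u-z$ and $z_1\oplus z_2=(z_1+z_2)\wedge u$, one checks by a short case analysis that for $x,y\in[0,u]$ the element $x\ominus y$ equals $(x-y)\vee 0 = x-(x\wedge y)$ computed in $R$. Once this identification is in place, the Observation immediately preceding the corollary, namely $x(y\vee z)=xy\vee xz$ in a chain semi-low $l_u$-ring, gives $a(b\ominus c)=a(b-(b\wedge c))=ab-(ab\wedge ac)=ab\ominus ac$ inside $R$. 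Because all quantities involved lie in $[0,u]$ (semi-lowness keeps products in $[0,u]$), this equality descends to $\Gamma(R,u)$ and delivers the required $PMV_f$-axiom.

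The only real bookkeeping is the identification of the two $\ominus$'s on the segment; everything else is handed to us by Proposition \ref{Gamma}, Theorem \ref{contenencias}, Definition \ref{anillo-especial}, and the Observation. There is no genuine obstacle, since chain-ness of $R$ makes the distributivity of multiplication over $\vee$ and $\wedge$ automatic.
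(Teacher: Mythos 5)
Your proposal is correct and follows essentially the same route as the paper: the paper obtains the $PMV$ (hence $MVW$-rig) structure from Proposition \ref{Gamma}, gets $ab\leq a\wedge b$ directly from semi-lowness, and derives $a(b\ominus c)=ab\ominus ac$ from the Observation immediately preceding the corollary on distributivity of the product over $\vee$ and $\wedge$ in a chain. Your only addition is to spell out the identification of the $MV$-operation $\ominus$ with truncated subtraction $(x-y)\vee 0$ on the segment, a detail the paper leaves implicit in the word ``Consequently.''
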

	
	\begin{Afir}\label{Gamma-funt} $\Gamma$ is functorial.
	\end{Afir}
	Given $\alpha:(R,u)\rightarrow (H,v)$ in $\mathcal{C}\mathcal{LR}_u,$ define $\Gamma(\alpha):\Gamma(R,u) \rightarrow \Gamma(H,v)$ in $\mathcal{CPMV}_f$ as follows:
	$\Gamma(\alpha):=\alpha|_{[0,u]}.$ By construction  $\Gamma(\alpha)$ is a homomorphism of totally ordered $MV$-algebras. Then it is enough to see that it respects products, that is, $$\Gamma(\alpha)(a)\Gamma(\alpha)(b)=\alpha(a)\alpha(b)=\alpha(ab)=\Gamma(\alpha)(ab).$$ Therefore, $\Gamma(\alpha)$ is a  morphism in  $\mathcal{C}\mathcal{LR}_u,$ such that for all $x\in\Gamma(R,u),$  it holds that $\Gamma(\beta)\Gamma(\alpha)(x)=\Gamma(\beta)\left(\Gamma(\alpha)(x)\right) =\Gamma(\beta)(\alpha(x))=\beta(\alpha(x))= (\beta\alpha)(x)= \Gamma(\beta\alpha)(x).$
	
	\begin{Teo}\label{Equivalencia-cadenas} For every  $PMV_f$-chain $A$ and chain semi-low  $l_u$-ring $(R,u),$ the following are isomorphisms:
		\begin{center}
			$A\cong \Gamma(A^\sharp, u)$ and $R\cong (\Gamma(R,u))^\sharp$
		\end{center}
	\end{Teo}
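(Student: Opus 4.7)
My plan is to reduce the theorem to the known $MV$-algebra / $l_u$-group case of the Chang--Dubuc-Poveda equivalence (cited in the excerpt as \cite{Yuri3}, Theorem 1.2) and then verify compatibility with products. By Theorem \ref{cnumeral} and Corollary \ref{gammacadena}, all four objects involved are already in the correct categories, so the only issue is whether the underlying $MV$-algebra/$l_u$-group bijections are multiplicative.

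For the first isomorphism, I would take $\varphi\colon A\to \Gamma(A^\sharp,u)$ defined by $a\mapsto (0,a)$. From the construction of $A^*$ in Theorem \ref{ecuchang} and the cited Chang--Dubuc-Poveda result, $\varphi$ is already an isomorphism of $MV$-algebras. To verify that it preserves the product, specialize Definition \ref{defanillo} to $m=n=0$: each of the terms $mn(0,u^2)$, $m(0,bu)$, $n(0,au)$ vanishes, leaving $(0,a)\cdot(0,b)=(0,ab)$, which is precisely $\varphi(ab)$. So $\varphi$ is a $PMV_f$-chain isomorphism.

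For the second isomorphism, I would use $\psi\colon (\Gamma(R,u))^\sharp\to R$ given by $(m,a)\mapsto mu+a$, extended $\Z$-linearly in the first coordinate. This is the standard $l_u$-group isomorphism from \cite{Yuri3}. To show $\psi$ is multiplicative, expand, by $\Z$-linearity,
\[
\psi\bigl((m,a)(n,b)\bigr)=\psi\bigl(mn(0,u^2)+m(0,bu)+n(0,au)+(0,ab)\bigr)=mn\,u^2+m\,bu+n\,au+ab,
\]
and compare with $\psi(m,a)\,\psi(n,b)=(mu+a)(nu+b)$, which expands by ring distributivity in $R$ to the same expression. The products $u^2$, $bu$, $au$, $ab$ appearing on the left are formed in $\Gamma(R,u)$ but agree with the corresponding products in $R$, since $\Gamma(R,u)$ inherits its multiplication from $R$ and each of those products lies in $[0,u]$ by the semi-low hypothesis.

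The only real point to check, and the one I expect to be the main (though minor) obstacle, is the bookkeeping needed to match Definition \ref{defanillo} with the binomial-style expansion of $(mu+a)(nu+b)$ in a commutative ring. The formula in Definition \ref{defanillo} was in fact engineered so that these two expressions coincide termwise; once this observation is made, both isomorphisms follow immediately from the underlying additive Chang--Dubuc-Poveda equivalence with essentially no further work beyond invoking Theorem \ref{cnumeral} and Corollary \ref{gammacadena} to guarantee that each side lies in the correct category.
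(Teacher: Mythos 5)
Your proposal is correct and follows essentially the same route as the paper: both cite the Chang--Dubuc-Poveda result for the underlying $MV$-algebra and $l_u$-group isomorphisms $i\colon a\mapsto(0,a)$ and $\upsilon\colon(m,a)\mapsto mu+a$, and then verify multiplicativity by the same direct computation, namely $(0,a)(0,b)=(0,ab)$ from Definition \ref{defanillo} with $m=n=0$, and the termwise match of $\upsilon[(m,a)(n,b)]=mnu^2+mbu+nau+ab$ with the ring expansion of $(mu+a)(nu+b)$. No gaps.
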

	
	\begin{proof}
		The correspondences  $i$ and $\upsilon:$
		$$
		\begin{array}{rclrcl}
		i:A&\longrightarrow&\Gamma(A^\sharp,u)& \hspace{30pt}\upsilon:(\Gamma(R,u))^\sharp &\longrightarrow& R\\
		a&\longmapsto& (0,a)& (m,x)&\longmapsto& mu+x\\
		\end{array} $$
		
		are isomorphisms of $MV$-algebras and $l_u$-groups respectively   (\cite{Chang2}, Lemma 6). It is then enough to prove that they respect the product. For $a,b\in A,$ $$i(ab)=(0,ab)=(0,a)(0,b)=i(a)i(b).$$
		
		On the other hand, given $(m,a),(n,b)\in A^\sharp,$ it is true that:\\
		$
		\upsilon\left[(m,a)(n,b)\right]=\upsilon\left[mn(0,u^2)+m(0,bu)+n(0,au)+(0,ab)\right]
		=mn\left[\upsilon(0,u^2)\right]+m\left[\upsilon(0,bu)\right]+n\left[\upsilon(0,bu)\right]+\upsilon(0,ab)
		=mnu^2+mbu+nau+ab
		=mu(nu+b)+a(nu+b)
		=(mu+a)(nu+b)
		=\upsilon(m,a)\upsilon(n,b).$
	\end{proof}
	It is now easy to prove that the isomorphisms defined based on Chang's construction given in theorem \ref{ecuchang}, $i$ and $\upsilon$, determine a categorical equivalence.
	\begin{Teo}\label{equivalencia cadenas}
		The  isomorphisms $i$ and $\upsilon$ defined above are natural transformations associated to the functors $\Gamma(-)^\sharp$ and $(-)^\sharp \Gamma$ respectively and establish an equivalence of categories $$\mathcal{CPMV}_f \stackrel{(-)^\sharp}{\longrightarrow} \mathcal{CLR}_u \hspace{1cm} \mathcal{CLR}_u \stackrel{\Gamma}{\longrightarrow} \mathcal{CPMV}_f$$
	\end{Teo}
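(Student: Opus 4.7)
The plan is to leverage Theorem~\ref{Equivalencia-cadenas}, which already supplies the component isomorphisms, and reduce the task to two standard verifications: naturality of $i$ and $\upsilon$, and functoriality of the composites $\Gamma(-)^\sharp$ and $(-)^\sharp\Gamma$. Functoriality of $(-)^\sharp$ and $\Gamma$ individually was settled in Proposition~\ref{numfun} and Affirmation~\ref{Gamma-funt}; composition of functors is always functorial, so the composites are functors and I only need to identify them as endofunctors of $\mathcal{CPMV}_f$ and $\mathcal{CLR}_u$ respectively.

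For the naturality of $i$, I would fix a morphism $h\colon A\to B$ in $\mathcal{CPMV}_f$ and check that the square with horizontal arrows $i_A$ and $i_B$ and vertical arrows $h$ and $\Gamma(h^\sharp)$ commutes. The computation is direct from the definitions: for $a\in A$,
\[
\Gamma(h^\sharp)(i_A(a)) \;=\; h^\sharp(0,a) \;=\; (0, h(a)) \;=\; i_B(h(a)),
\]
using that $\Gamma(h^\sharp)$ is just the restriction of $h^\sharp$ to $[0,u]$ and the definition of $h^\sharp$ from Proposition~\ref{numfun}. Symmetrically, for naturality of $\upsilon$ I fix $\alpha\colon (R,u)\to (H,v)$ in $\mathcal{CLR}_u$ and verify the square with horizontal arrows $\upsilon_R$, $\upsilon_H$ and vertical arrows $(\Gamma(\alpha))^\sharp$ and $\alpha$: for $(m,x)\in (\Gamma(R,u))^\sharp$,
\[
\alpha(\upsilon_R(m,x)) = \alpha(mu+x) = m\alpha(u) + \alpha(x) = mv + \Gamma(\alpha)(x) = \upsilon_H(m,\Gamma(\alpha)(x)) = \upsilon_H\bigl((\Gamma(\alpha))^\sharp(m,x)\bigr),
\]
since $\alpha$ is a unital ring (hence group) homomorphism sending $u$ to $v$ and agreeing with $\Gamma(\alpha)$ on $[0,u]$.

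Having verified naturality, the two transformations $i\colon 1_{\mathcal{CPMV}_f} \Rightarrow \Gamma(-)^\sharp$ and $\upsilon\colon (-)^\sharp\Gamma \Rightarrow 1_{\mathcal{CLR}_u}$ are natural isomorphisms by Theorem~\ref{Equivalencia-cadenas}, which exhibits them as componentwise isomorphisms. A functor possessing a natural isomorphism to the identity on one side and from the identity on the other (after composing with its putative inverse) is by definition part of an equivalence of categories, so this finishes the proof.

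I expect no genuine obstacle here: all nontrivial content, namely that $i_A$ and $\upsilon_R$ respect the product (and not just the $MV$/group structure inherited from Chang's construction), was the burden of Theorem~\ref{Equivalencia-cadenas}. The naturality squares reduce to the observation that both $h^\sharp$ and $(\Gamma(\alpha))^\sharp$ were defined on the Chang set $\mathbb{Z}\times A$ componentwise, so they commute with the embedding $a\mapsto (0,a)$ and the projection $(m,x)\mapsto mu+x$ essentially by construction. The only mild care needed is to track which functor plays which role in each naturality square, but no new algebraic identity is required beyond those already used in Proposition~\ref{numfun} and Affirmation~\ref{Gamma-funt}.
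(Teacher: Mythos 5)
Your proposal is correct and follows essentially the same route as the paper: both reduce the theorem to the two componentwise naturality computations $\Gamma(h^\sharp)i(a)=(0,h(a))=i(h(a))$ and $\varphi\,\upsilon(n,x)=nw+\varphi(x)=\upsilon\bigl((\Gamma\varphi)^\sharp(n,x)\bigr)$, and then invoke Theorem~\ref{Equivalencia-cadenas} for the fact that the components are isomorphisms. The only difference is that you spell out the functoriality of the composites and the definition of equivalence slightly more explicitly, which the paper leaves implicit.
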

	\begin{proof}
		The proof is analogous to theorem 2.2 on  \cite{Yuri3}. Given  $A\stackrel{h} {\longrightarrow}B$ in $\mathcal{CPMV}_f,$ and with $(R,u)\stackrel{\varphi}{\longrightarrow}(H,w)$ in $\mathcal{CLR}_u$, the naturality of $i$ and $v$ follows from the commutativity of the following diagrams:
		
		\centerline{
			\xymatrix@R=0.5cm{
				A\ar[r]^-{i}\ar[d]_{h} & \Gamma(A^\sharp,u)\ar @{^{(}->}[r]\ar[d]^{\Gamma(h^\sharp)} & A^\sharp\ar[d]& & \Gamma(R,u)^\sharp \ar[d]_{(\Gamma\varphi)^\sharp} \ar[r]^{v}& (R,u)\ar[d]^{\varphi} \\  
				B\ar[r]^-{i}&\Gamma(B^\sharp,u )\ar  @{^{(}->}[r]& B^\sharp & & \Gamma(H,w)^\sharp \ar[r]^{v}& (H,w) \\
		}}
		Given $a\in A,$ it holds that $\Gamma(h^\sharp)i(a)=\Gamma(h^\sharp)(0,a)=h^\sharp(0,a)=(0,h(a))=ih(a),$ and for $(n,x)\in \Gamma(R,w)^\sharp,$ then $\varphi v(n,x)=\varphi(nu+x)=nw+\varphi(x)=v(n,\varphi(x))=v(n,(\Gamma\varphi)(x))=v(\Gamma\varphi)^\sharp(n,x).$ 
	\end{proof}
	\subsection{Categorical equivalence between the categories $\mathcal{PMV}_f$ and $\mathcal{LR}_u.$}
	\subsubsection{Subdirect representation of $\mathcal{PMV}_f$-algebras by chains }
	Recall the partial order isomorphism between the ideals of an $l_u$-group $G$ and the ideals of its $MV$-algebra $\Gamma(G,u),$ established on theorem 7.2.2 of \cite{Cignoli}. 
	
	\begin{Teo}\label{isomorfismo-de-l-ideales} Given G an $l_u$-group and $A=\Gamma(G,u),$ the correspondence 
		$$\begin{array}{rcl}
		\phi: \mathcal{I}(A)&\longrightarrow&\mathcal{I}(G)\\
		J&\longmapsto & \phi(J)=\{x\in G| \left|x\right|\wedge u\in J\}\\
		\end{array}
		$$
		is a partial order  isomorphism between the ideals of the  $MV$-algebra $\Gamma(G,u)$ and the $l$-ideals of the $l_u$-group, and its inverse is given by  $H\mapsto \psi(H)=H \cap [0,u].$\\
	\end{Teo}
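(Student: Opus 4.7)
The plan is to verify that $\phi$ and $\psi$ are each well defined, that they are mutually inverse, and that they preserve inclusion. Monotonicity is obvious from the set-theoretic definitions, so the real content lies in the two well-definedness arguments together with the two round-trip identities $\psi\phi=\mathrm{id}$ and $\phi\psi=\mathrm{id}$.

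To see that $\phi(J)$ is an $l$-ideal of $G$ I must check closure under subtraction and convexity. The key technical fact is
$$
(a+b)\wedge u \;\leq\; (a\wedge u) + (b\wedge u)\qquad\text{for all }a,b\geq 0,
$$
which I would prove by a short case analysis splitting on whether $a+b\leq u$ and on whether $a$ or $b$ already exceeds $u$. Intersecting with $u$ on the right yields $(a+b)\wedge u\leq (a\wedge u)\oplus(b\wedge u)$ inside $\Gamma(G,u)$. Applying this to $a=|x|$, $b=|y|$ and combining with $|x-y|\leq |x|+|y|$, I conclude $|x-y|\wedge u\leq (|x|\wedge u)\oplus(|y|\wedge u)\in J$, so downward closure of the MV-ideal $J$ in $[0,u]$ gives $|x-y|\wedge u\in J$. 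Convexity of $\phi(J)$ is immediate: $|y|\leq |x|$ implies $|y|\wedge u\leq |x|\wedge u\in J$, and again $J$ is downward closed.

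That $\psi(H)=H\cap[0,u]$ is an MV-ideal is routine: closure under $\oplus$ follows from $a+b\in H$ and $(a+b)\wedge u\in H$ (an $l$-subgroup is closed under meets with $u$), while downward closure in $[0,u]$ is immediate from the convexity of $H$. One round trip is then trivial: for $x\in[0,u]$ one has $|x|\wedge u=x$, so $\phi(J)\cap[0,u]=J$, i.e.\ $\psi\phi=\mathrm{id}$.

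The main obstacle is the other round trip $\phi\psi=\mathrm{id}$, i.e.\ showing that $|x|\wedge u\in H$ forces $x\in H$. Setting $a=|x|$, the strong unit gives some $n\geq 0$ with $a\leq nu$, and I would induct on $n$ using the decomposition
$$
a \;=\; (a\wedge u) + (a-u)^{+}.
$$
The base $n=0$ forces $a=0\in H$. For the inductive step, from $(a-u)^{+}\leq a$ I get $(a-u)^{+}\wedge u\leq a\wedge u\in H$, so by convexity $(a-u)^{+}\wedge u\in H$; since $(a-u)^{+}\leq (n-1)u$, the induction hypothesis places $(a-u)^{+}$ itself in $H$, and the displayed identity then puts $a\in H$, whence $x\in H$ by one more use of convexity. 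This decomposition argument is the only step that genuinely uses the strong unit, and it is where essentially all the technical work of the theorem is concentrated; everything else is formal bookkeeping.
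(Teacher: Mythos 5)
Your argument is essentially correct and complete, but the comparison here is one-sided: the paper does not prove this statement at all --- it simply recalls it as Theorem 7.2.2 of \cite{Cignoli}. Your route is the standard one, and in fact your key decomposition $a=(a\wedge u)+(a-u)^{+}$, iterated through the induction on $n$, is exactly the decomposition $x=\sum_{k=0}^{n}\bigl((x-ku)\wedge u\bigr)\vee 0$ that the paper itself imports (via Theorem 1.5-c of \cite{Yuri3}) in the proof of the adjacent Proposition \ref{isomorfismo-de-ideales-2}; so your proof is consistent with the paper's own toolkit and fills in the citation. Two small points of rigor. First, your proposed ``case analysis splitting on whether $a+b\leq u$'' for the subadditivity $(a+b)\wedge u\leq(a\wedge u)+(b\wedge u)$ is not exhaustive in a general $l$-group, where elements need not be comparable; either compute directly, $(a\wedge u)+(b\wedge u)=(a+b)\wedge(a+u)\wedge(b+u)\wedge 2u\geq(a+b)\wedge u\wedge u\wedge u$ using $a,b,u\geq 0$, or justify the reduction to totally ordered groups by the subdirect representation of abelian $l$-groups. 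Second, your justification that $H\cap[0,u]$ is closed under $\oplus$ leans on ``an $l$-subgroup is closed under meets with $u$,'' which is not the right reason since $u$ need not lie in $H$; what you actually need is convexity, namely $0\leq(a+b)\wedge u\leq a+b\in H$, which the paper's definition of $l$-ideal supplies directly. With those two repairs the proof is sound.
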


	\begin{Pro}\label{isomorfismo-de-ideales-2} For a semi-low $l_u$-ring $R$, an ideal $J$ of the $PMV_f$-algebra $\Gamma(R,u)$ and $\phi(J)$ the ideal of the $l_u$-group $(R,+,u)$ as in theorem \ref{isomorfismo-de-l-ideales}, it holds that $\phi(J)=J^\sharp$ with $$J^\sharp=\left\{x\in R \mid x=\sum\limits_{i=1}^{m}\epsilon_ic_i, \,\, c_i\in J,\epsilon_i\in \{-1,1\}\right\}.$$
	\end{Pro}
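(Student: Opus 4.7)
The plan is to prove the equality $\phi(J)=J^\sharp$ by the two inclusions, using only that $\phi(J)$ is already an $l$-ideal of the $l_u$-group $R$ (Theorem \ref{isomorfismo-de-l-ideales}) and that its inverse under $\phi$ is the segment intersection $H\mapsto H\cap[0,u]$. Note that Proposition \ref{espectros} lets us identify ideals of the $PMV_f$-algebra $\Gamma(R,u)$ with ideals of its underlying $MV$-algebra, so Theorem \ref{isomorfismo-de-l-ideales} applies directly to $J$.

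For the inclusion $J^\sharp\subseteq\phi(J)$: every $c\in J$ lies in $[0,u]=\Gamma(R,u)$, so $|c|\wedge u=c\in J$ and hence $c\in\phi(J)$. Since $\phi(J)$ is a subgroup of $R$, any finite signed sum $\sum\epsilon_i c_i$ with $c_i\in J$ also belongs to $\phi(J)$.

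For the reverse inclusion $\phi(J)\subseteq J^\sharp$, let $x\in\phi(J)$ and write $x=x^+-x^-$. Because $\phi(J)$ is an $l$-ideal, both $x^+$ and $x^-$ belong to $\phi(J)$ (using $|x^\pm|\leq|x|$ together with the fact that $|x|\in\phi(J)$ whenever $x\in\phi(J)$). It therefore suffices to show that every $y\in\phi(J)$ with $y\geq 0$ is a positive sum of finitely many elements of $J$; applied to $x^+$ and $x^-$, this exhibits $x$ as a signed sum in $J^\sharp$. To produce such a decomposition, use the strong unit to pick $n$ with $y\leq nu$, and define inductively $a_k:=(y-a_1-\cdots-a_{k-1})\wedge u$ for $k=1,\ldots,n$. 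The $l$-group identity $z-(z\wedge u)=(z-u)\vee 0$ shows by induction that each residue $r_k:=y-a_1-\cdots-a_k$ satisfies $0\leq r_k\leq (n-k)u$, so that $r_n=0$ and $y=a_1+\cdots+a_n$. At each step $r_{k-1}\in\phi(J)$ because $\phi(J)$ is a subgroup; then $a_k=r_{k-1}\wedge u$ lies in $\phi(J)$ (closed under $\wedge$) and in $[0,u]$, so $a_k\in\phi(J)\cap[0,u]=\psi(\phi(J))=J$. Hence every $a_k\in J$ and $y\in J^\sharp$, completing the argument.

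The only delicate point is this iterative truncation: one must verify not merely that the pieces $a_k$ land in $[0,u]$, but that at every stage the residue remains inside the $l$-ideal $\phi(J)$, so that its truncation by $u$ lies in $J$ rather than merely in $[0,u]$. Once this bookkeeping is carried out, the rest of the argument is a purely formal consequence of the $MV$--$l_u$-group correspondence and of the fact that $J^\sharp$, defined as the set of signed sums of elements of $J$, is automatically the subgroup of $R$ generated by $J$.
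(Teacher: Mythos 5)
Your proof is correct, but it is organized differently from the paper's. The paper never argues the inclusion $\phi(J)\subseteq J^\sharp$ directly: it first shows that $J^\sharp$ is itself an $l$-ideal of $(R,+,u)$ --- the convexity check ``$x\in J^\sharp$ and $|y|\le|x|$ imply $y\in J^\sharp$'' is carried out with the same segment decomposition $a_k=\bigl((x-ku)\wedge u\bigr)\vee 0$ that you rebuild iteratively, quoted there from Theorem 1.5-c of Dubuc--Poveda --- then proves $J^\sharp\cap[0,u]=J=\phi(J)\cap[0,u]$ via the inequality $x\wedge u\le c_1\oplus\cdots\oplus c_m\in J$, and finally concludes $J^\sharp=\phi(J)$ from the order isomorphism of Theorem \ref{isomorfismo-de-l-ideales}. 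You instead prove the set equality by double inclusion, taking as given that $\phi(J)$ is an $l$-ideal and decomposing each of its positive elements into truncations that land in $\psi(\phi(J))=J$. Your route is somewhat more economical: the inclusion $J^\sharp\subseteq\phi(J)$ becomes immediate, and you avoid verifying the convexity of $J^\sharp$ and the inequality (\ref{jnum}) altogether; the price is that you re-derive the truncation decomposition by hand rather than citing it. One small wording issue: the reason $a_k=r_{k-1}\wedge u$ stays in $\phi(J)$ is not that $l$-ideals are ``closed under $\wedge$'' against arbitrary elements of $R$ (in general $u\notin\phi(J)$), but convexity: $0\le r_{k-1}\wedge u\le r_{k-1}$ with $r_{k-1}\in\phi(J)$. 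With that justification supplied, your argument is complete.
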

	\begin{proof}
		$J^\sharp$ is an $l$-ideal of the $l_u$-group $(R,+,u).$ In fact, $J^\sharp$ is a subgroup  of  $R$ by construction. Next it must be proven that given $x\in J^\sharp$ and $y\in R$ such that $|y|\leq |x|,$  $y\in J^\sharp.$ Suppose without loss of generality that  $|x|=x^+=x$ and $|y|=y^+=y.$\\

		Since $x=\sum\limits_{i=1}^{m}\epsilon_{i}c_i$ with $c_i\in J,$ 
		\begin{equation}\label{jnum}
		x\wedge u=\left(\sum\limits_{i=1}^{m}\epsilon_{i}c_i\right)\wedge u=\left|\sum\limits_{i=1}^{m}\epsilon_{i}c_i\right|\wedge u\leq \left(\sum\limits_{i=1}^{m}c_i\right)\wedge u=\oplus_{i=1}^{n}c_i\in J,
		\end{equation}
		therefore, $x\wedge u\in J.$
		
		By theorem 1.5-c of \cite{Yuri3}, it is enough to consider $x=\sum\limits_{k=0}^{n}a_k,$ with $a_k=(x-ku)\wedge u\vee 0,$  where $0<x<nu$ for some $n\in \mathbb{N},$ since the elements are in an $l_u$-group.
		
		If $x-ku>0,$ $a_k\in J$ because  
		$$(x-ku)\wedge u\vee 0=(x-ku)\wedge u\leq x\wedge u\in J,$$  
		
		by inequality (\ref{jnum}).
		If $(x-ku)<0$ then $a_k=0\in J.$
		
		Since $0\leq y\leq x\leq nu$ implies  $b_k=(y-ku)\wedge u\vee 0\leq (x-ku)\wedge u\vee 0= a_k \in J,$ then $y=\sum\limits_{k=0}^{n}b_k \in J^\sharp$.

		The same proof can be used if $x=x^-$ and $y=y^-.$  Because $|x|=x^++x^-$ and $|y|=y^++y^-,$ both are sums of positive elements and  $J^\sharp$ is a subgroup of $R,$ $|y|\leq|x|$ and $x\in J^\sharp$ imply  $y\in J^\sharp.$ 
		
		By construction $J \subseteq J^\sharp$,  and for inequality (\ref{jnum}), $J^\sharp \cap[0,u]\subseteq J$ so $$J^\sharp \cap[0,u]=J=\phi(J)\cap[0,u],$$ thus, by the isomorphism given in theorem \ref{isomorfismo-de-l-ideales}, $J^\sharp =\phi(J).$ 
	\end{proof}
	
	\begin{Cor}\label{fanillo1} $\phi(J)=J^\sharp$ is an ideal of the $l_u$-ring.
	\end{Cor}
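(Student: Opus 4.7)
The plan is to upgrade the statement of Proposition \ref{isomorfismo-de-ideales-2} (which shows $\phi(J)=J^\sharp$ is an $l$-ideal of the $l_u$-group $(R,+,u)$) to the ring-theoretic notion of $L$-ideal from Definition \ref{l-ideal}. Since Proposition \ref{isomorfismo-de-ideales-2} already delivers that $J^\sharp$ is an $l$-ideal, the only thing left to verify is the absorption property: for every $y\in J^\sharp$ and every $x\in R$, $xy\in J^\sharp$.

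The idea is to reduce this ring-absorption in $R$ to the absorption that holds at the $PMV_f$-algebra level (which is guaranteed by Definition \ref{ideal} together with Proposition \ref{espectros}, since $J$ is an ideal of the underlying $MV$-algebra and hence of the $PMV_f$-algebra $\Gamma(R,u)$). First, I would write an arbitrary $y\in J^\sharp$ in its presentation form $y=\sum_{i=1}^{m}\epsilon_i c_i$ with $c_i\in J$ and $\epsilon_i\in\{-1,1\}$. Then, invoking Theorem \ref{generado}(b), which describes $R=[0,u]^\sharp$ as the subring generated by $[0,u]$, I would write $x=\sum_{j=1}^{k}\delta_j a_j$ with $a_j\in[0,u]=\Gamma(R,u)$ and $\delta_j\in\{-1,1\}$. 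Distributing,
\[
xy \;=\; \sum_{i,j}\epsilon_i\delta_j\,(a_j c_i).
\]

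Now each factor $a_j c_i$ is a product of two elements of $[0,u]$, and since $R$ is semi-low we have $a_j c_i\leq a_j\wedge c_i\leq c_i$, so in particular $a_jc_i\in[0,u]$. Because $c_i\in J$ and $J$ is an ideal of the $PMV_f$-algebra $\Gamma(R,u)$, the absorbent property of $PMV_f$-ideals (Definition \ref{ideal}, together with Proposition \ref{espectros} giving $Id_W(A)=Id(A)$ for $A=\Gamma(R,u)$) yields $a_j c_i\in J$. Therefore $xy$ is a signed sum of elements of $J$, which is precisely the defining form of elements of $J^\sharp$, so $xy\in J^\sharp$.

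No step should present a serious obstacle; the only thing one has to be a bit careful about is the interplay between the two different absorption notions (the ring absorption we want and the $PMV_f$-algebra absorption we have), and this is bridged exactly by Theorem \ref{generado}(b), which provides the required decomposition of $x$ into elements of the segment $[0,u]$, and by the semi-low hypothesis, which keeps the pairwise products $a_j c_i$ inside $[0,u]$ where the $PMV_f$-absorption applies.
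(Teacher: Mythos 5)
Your proposal is correct and follows essentially the same route as the paper: decompose $x\in R$ as a signed sum of elements of $[0,u]$ via Theorem \ref{generado}, decompose $y\in J^\sharp$ as a signed sum of elements of $J$, distribute, and conclude that each pairwise product lies in $J$ by the absorbent property of $J$ as an ideal of the $PMV_f$-algebra $\Gamma(R,u)$. Your extra remark that semi-lowness keeps each product $a_jc_i$ inside $[0,u]$ (indeed below $c_i$) is a slightly more explicit justification of the step the paper dispatches with ``since this is absorbent,'' but it is the same argument.
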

	\begin{proof}
		It is enough to show that $J^\sharp$ is absorbent.  For any $r\in R,$  $r=\sum\limits_{j=1}^{m}\alpha_jd_j$ with $d_j\in [0,u]$ and $\alpha_j\in \{-1,1\},$ because of theorem \ref{generado}, and given $x\in J^\sharp,$ $x=\sum\limits_{i=1}^{n}\epsilon_ic_i,$ with $c_i\in J,$  then 
		$$rx=\sum\limits_{j=1}^{m}\alpha_jd_j \sum\limits_{i=1}^{n}\epsilon_ic_i=\sum\limits_{i,j=1}^{mn}\alpha_j\epsilon_i d_jc_i,$$ where $d_jc_i\in J,$   since this is absorbent, therefore $rx\in J^\sharp.$ 
	\end{proof}
	\begin{Cor}\label{l-L}
		In a semi-low $l_u$- ring every $l$-ideal is an $L$-ideal. $$Id_g(R)=Id(R).$$
	\end{Cor}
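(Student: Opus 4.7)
The plan is to deduce this corollary directly from the preceding results by observing that every $l$-ideal is already the image under $\phi$ of some ideal of $\Gamma(R,u)$, and that such images are known to be absorbent.

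First I would note that the inclusion $Id(R)\subseteq Id_g(R)$ is trivial from the definitions: an $L$-ideal is by Definition \ref{l-ideal} an $l$-ideal of the subjacent group that additionally satisfies the absorbent property, so no work is required here. The substance of the corollary lies in the reverse inclusion.

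For the nontrivial direction, I would pick an arbitrary $H\in Id_g(R)$ and apply Theorem \ref{isomorfismo-de-l-ideales} in the setting $G=(R,+,u)$ with the $PMV_f$-algebra $A=\Gamma(R,u)$ (which is a $PMV_f$ by Corollary \ref{gammacadena} when working pointwise along the subdirect representation, or more directly because $R$ is semi-low). The theorem gives a lattice isomorphism $\phi$ between ideals of $A$ and $l$-ideals of $(R,+,u)$, whose inverse sends $H$ to $\psi(H)=H\cap[0,u]$. Setting $J:=\psi(H)$, this $J$ is an ideal of the $MV$-algebra $A$, and since $A$ is a $PMV_f$ we have $J\in Id_W(A)$ by Proposition \ref{espectros}. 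The bijection then yields $H=\phi(J)=J^\sharp$ in the notation of Proposition \ref{isomorfismo-de-ideales-2}.

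The final step is to invoke Corollary \ref{fanillo1}, which asserts precisely that $J^\sharp=\phi(J)$ is absorbent in the $l_u$-ring $R$. Combined with the fact that $J^\sharp$ is already an $l$-ideal of the group (from Proposition \ref{isomorfismo-de-ideales-2}), this shows $H=J^\sharp$ is an $L$-ideal, so $H\in Id(R)$, yielding $Id_g(R)\subseteq Id(R)$ and the desired equality. There is essentially no obstacle here: all the real work was already carried out in Proposition \ref{isomorfismo-de-ideales-2} and Corollary \ref{fanillo1}; the only subtlety is matching the language of $l$-ideals of the group with that of ideals of the underlying $MV$-algebra, which is handled by the isomorphism $\phi,\psi$.
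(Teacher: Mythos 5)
Your proof is correct and follows essentially the same route as the paper: pass through the bijection $\phi,\psi$ of Theorem \ref{isomorfismo-de-l-ideales}, observe that $H\cap[0,u]$ is absorbent because $\Gamma(R,u)$ is a $PMV_f$-algebra (Proposition \ref{espectros}), and conclude via Proposition \ref{isomorfismo-de-ideales-2} and Corollary \ref{fanillo1} that $H=(H\cap[0,u])^\sharp$ is an $L$-ideal. Your parenthetical remark that the absorbency can be justified directly from the semi-low condition $ab\leq a\wedge b$ on $[0,u]$ is in fact the cleanest justification, since it avoids any appeal to results stated later in the paper.
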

	\begin{proof}
		For any $J \in Id_g(R),$ because of theorem \ref{isomorfismo-de-l-ideales} it holds that $J \cap [0,u] \in Id(\Gamma(R,u))$. Because $\Gamma(R,u) \in \mathcal{PMV}_f$, $J \cap [0,u]$ absorbs, then $J\cap [0,u] \in Id_{_W}(\Gamma(R,u))$ and consequently by proposition \ref{isomorfismo-de-ideales-2}, $J = (J\cap [0,u])^\sharp\in Id(R)$. 
		In particular,	$Spec_g(R) \subset Id(R)$.
	\end{proof}
	
	\begin{Cor}\label{cocienteanillo}
		For any $J \in Id_g(R)$ where  $R$ is  a semi-low $l_u$-ring, $R/J$ is a semi-low $l_u$-ring. 
	\end{Cor}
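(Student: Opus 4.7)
The plan is to leverage Corollary \ref{l-L} together with the standard behaviour of quotients by $l$-ideals, and then do one short lifting argument to transfer the semi-low condition from $R$ to $R/J$.

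First I would verify that $R/J$ is an $l_u$-ring at all. By Corollary \ref{l-L}, $J \in Id_g(R)=Id(R)$, so $J$ absorbs multiplication and the ring structure of $R$ passes to the quotient. Since $J$ is an $l$-ideal of the underlying $l_u$-group $(R,+,u)$, the quotient $R/J$ carries a well-defined lattice-ordered abelian group structure in which $\bar u$ remains a strong unit, and positivity is preserved under multiplication because $J$ is convex and absorbent. Thus $R/J$ is a commutative $l_u$-ring; the only nontrivial point is the semi-low property.

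The key step is a lifting claim: every $\bar c \in [0,\bar u] \subset R/J$ admits a representative $c \in [0,u] \subset R$. Given any $c' \in R$ with $\overline{c'}=\bar c$, set $c:=(c')^+ \wedge u$. Since the quotient map commutes with $\vee$ and $\wedge$, one computes $\bar c = \overline{(c')^+\wedge u} = (\bar c \vee 0)\wedge \bar u = \bar c$, using $0\le \bar c\le \bar u$, while by construction $0\le c\le u$ in $R$. With this lift in hand, the semi-low condition for $R/J$ is immediate: given $\bar a,\bar b \in [0,\bar u]$, pick representatives $a,b\in[0,u]$ in $R$; the semi-low hypothesis on $R$ gives $ab \le a\wedge b$ in $R$, and projecting yields $\bar a\bar b \le \bar a \wedge \bar b$ in $R/J$.

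There is essentially no obstacle here; the only point requiring care is the lifting, and even that is the standard $l$-group fact that $[0,u]$ maps onto $[0,\bar u]$ under the quotient by an $l$-ideal. One could alternatively argue more abstractly by observing that $J\cap[0,u]\in Id_W(\Gamma(R,u))$ (via Proposition \ref{isomorfismo-de-ideales-2}), applying Proposition \ref{A/I} to get $\Gamma(R,u)/(J\cap[0,u]) \in \mathcal{PMV}_f$, and identifying this quotient $PMV_f$-algebra with $\Gamma(R/J,\bar u)$; but since that identification is itself just the Mundici-style correspondence at the ideal level, the direct lifting argument above is cleaner and self-contained.
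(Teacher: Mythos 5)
Your argument is correct and matches what the paper intends: the paper states this corollary without any proof, leaving it as an immediate consequence of Corollary \ref{l-L} (so that $J$ is an $L$-ideal and the ring structure descends), and your lifting of $[0,\bar u]$ to $[0,u]$ via $c=(c')^{+}\wedge u$ is exactly the standard detail needed to transfer the semi-low inequality to the quotient. Nothing is missing.
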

	\begin{Teo}\label{cociente-ideales} For any  $J \in Id_g(R)$ where $R$ is a semi-low $l_u$-ring,
		$$\Theta \colon \Gamma(R/J,u_J) \to  \Gamma(R,u)/(J\cap[0,u]); \,\, [x]_J \longmapsto [x]_{J\cap [0,u]}$$
		is an isomorphism of $PMV_f$-algebras.
	\end{Teo}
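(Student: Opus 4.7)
The plan is to show $\Theta$ is well-defined, bijective, and preserves the $PMV_f$ operations, with the bijectivity resting entirely on the partial-order isomorphism of Theorem \ref{isomorfismo-de-l-ideales}. Throughout, the key auxiliary identity is that for $x,y\in [0,u]$ in the $l_u$-group $R$ one has $(x\ominus y)\oplus(y\ominus x)=|x-y|\wedge u$, since $x\ominus y=(x-y)^+$ whenever $x,y\in [0,u]$, and $(x-y)^+\wedge (y-x)^+=0$ in a lattice-ordered group.

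First I would prove well-definedness. Suppose $x,y\in[0,u]$ with $[x]_J=[y]_J$, i.e.\ $x-y\in J$. Since $J$ is an $l$-ideal, $|x-y|\in J$, hence $|x-y|\wedge u\in J\cap[0,u]$. By the identity above this says $(x\ominus y)\oplus(y\ominus x)\in J\cap[0,u]$, so $[x]_{J\cap[0,u]}=[y]_{J\cap[0,u]}$. For injectivity I would run the same chain in reverse using Theorem \ref{isomorfismo-de-l-ideales}: if $[x]_{J\cap[0,u]}=[y]_{J\cap[0,u]}$, then $|x-y|\wedge u\in J\cap[0,u]$, and by the description $\phi(J\cap[0,u])=\{z\in R\mid |z|\wedge u\in J\cap[0,u]\}=J$ we conclude $x-y\in J$. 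Surjectivity is immediate: an arbitrary class in $\Gamma(R,u)/(J\cap[0,u])$ has the form $[x]_{J\cap[0,u]}$ with $x\in[0,u]$, and then $[x]_J$ lies in $\Gamma(R/J,u_J)$ (because $0\le x\le u$ forces $0\le [x]_J\le u_J$) and maps to it under $\Theta$.

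Next I would check that $\Theta$ is a homomorphism of $PMV_f$-algebras. The operations in $\Gamma(R/J,u_J)$ are inherited from the ring/group operations in $R/J$: $[x]_J\oplus[y]_J=[(x+y)\wedge u]_J$, $\neg[x]_J=[u-x]_J$, and $[x]_J\cdot[y]_J=[xy]_J$. Applying $\Theta$ and comparing with the analogous expressions in $\Gamma(R,u)/(J\cap[0,u])$ gives preservation of $0$, $\oplus$, $\neg$, and $\cdot$ by direct unfolding; the nontrivial step for the product uses only that ring multiplication passes to both quotients and commutes with restriction to the segment, which holds because $\Gamma(R,u)$ is a $PMV_f$ by Corollary \ref{gammacadena}.

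The main obstacle, and really the only one, is controlling the ideal-theoretic passage between $J$ and $J\cap[0,u]$: one must know both that every class equality in $R/J$ descends to a class equality in $\Gamma(R,u)/(J\cap[0,u])$ and conversely. Theorem \ref{isomorfismo-de-l-ideales} (together with Corollary \ref{l-L}, which guarantees $J\cap[0,u]$ is an ideal of the $PMV_f$-algebra $\Gamma(R,u)$ so that the quotient on the right is a $PMV_f$) supplies both directions at once, so once the identity $(x\ominus y)\oplus(y\ominus x)=|x-y|\wedge u$ is in hand the argument reduces to bookkeeping.
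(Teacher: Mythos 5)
Your proof is correct, and the product-preservation step at the end coincides with what the paper does. The difference is in how the underlying $MV$-algebra isomorphism is obtained: the paper simply cites Theorem 7.2.4 of \cite{Cignoli} for the fact that $\Gamma(R/J,u_J)\cong \Gamma(R,u)/(J\cap[0,u])$ as $MV$-algebras, and then devotes its entire (three-line) proof to checking $\Theta([a]_J[b]_J)=\Theta([a]_J)\Theta([b]_J)$. You instead reconstruct the $MV$-part from scratch out of Theorem \ref{isomorfismo-de-l-ideales}, via the identity $(x\ominus y)\oplus(y\ominus x)=|x-y|\wedge u$ for $x,y\in[0,u]$ (which is correct: $x\ominus y=(x-y)^+$ on the segment, and $(x-y)^++(y-x)^+=|x-y|$). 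That buys self-containedness — your argument makes visible exactly where the inverse correspondence $\phi(J\cap[0,u])=J$ is used for injectivity, which the external citation hides — at the cost of length. Both routes are sound.

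One small citation slip: to know that the codomain is a $PMV_f$-algebra you need $J\cap[0,u]$ to be an absorbent ideal of $\Gamma(R,u)$, and for that you should invoke Proposition \ref{espectros} (every $MV$-ideal of a $PMV_f$-algebra absorbs) together with the fact that $\Gamma(R,u)$ is a $PMV_f$-algebra; Corollary \ref{gammacadena}, which you cite, only covers the chain case, and Corollary \ref{l-L} goes in the other direction (from $l$-ideals of $R$ to $L$-ideals). This does not affect the substance of your argument.
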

	\begin{proof}
		Because the $MV$-algebras are isomorphic, due to theorem 7.2.4 of \cite{Cignoli}, it is enough to see that the isomorphism respects products. Using corollary \ref{cocienteanillo}, proposition \ref{Gamma} and the definition of $\Theta$ it follows that $$\Theta([a]_J[b]_J)= \Theta([ab]_J)=[ab]_{J\cap [0,u]}=\left([a]_{J\cap [0,u]}\right) \left([b]_{J\cap [0,u]}\right).$$
	\end{proof}
	\begin{Cor}
		If $J \in Spec_g(R)$ then $\Theta$ is an isomorphism of  $PMV_f$-chains.
	\end{Cor}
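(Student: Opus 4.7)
The plan is to upgrade the $PMV_f$-algebra isomorphism of Theorem \ref{cociente-ideales} to a chain isomorphism by showing that both sides are totally ordered as soon as $J$ is $l$-prime. Since the map $\Theta$ itself is already established as an isomorphism of $PMV_f$-algebras, no new algebraic verification is needed; only the total order on each side must be recognized.

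First I would invoke the definition of $l$-prime ideal: $J \in Spec_g(R)$ means by definition that the quotient $l_u$-group $R/J$ is a chain. Combining this with Corollary \ref{cocienteanillo}, which tells us $R/J$ is a semi-low $l_u$-ring, we conclude that $R/J$ is a chain semi-low $l_u$-ring, i.e., an object of $\mathcal{C}\mathcal{LR}_u$. Next, applying Corollary \ref{gammacadena}, the $\Gamma$-image $\Gamma(R/J, u_J)$ is a $PMV_f$-chain.

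Having identified the domain of $\Theta$ as a chain, the codomain $\Gamma(R,u)/(J\cap[0,u])$ inherits the chain property transportingly: any $PMV_f$-algebra isomorphic to a totally ordered one is itself totally ordered, because $\Theta$ preserves the order (it preserves $\oplus$ and $\neg$, hence the order relation of Affirmation \ref{orden} given by $x \leq y \Leftrightarrow x \ominus y = 0$). Consequently both algebras live in $\mathcal{CPMV}_f$ and the morphism $\Theta$, already a $PMV_f$-homomorphism with a two-sided inverse, is an isomorphism in this category of chains.

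There is essentially no obstacle here; the statement is a corollary in the literal sense. The only point requiring care is that one genuinely uses the definition of $l$-prime as "quotient is a chain" (rather than any ring-theoretic primeness notion), and that $\Theta$ being an isomorphism of $PMV_f$-algebras automatically transfers the total order from one side to the other.
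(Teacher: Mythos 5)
Your proposal is correct and follows essentially the same route as the paper, which likewise derives the corollary from Theorem \ref{cociente-ideales} together with Corollary \ref{gammacadena}; you simply make explicit the intermediate steps (the definition of $l$-prime as ``quotient is a chain,'' Corollary \ref{cocienteanillo}, and the transport of the total order along the isomorphism) that the paper leaves implicit.
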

	\begin{proof}
		It follows from the last theorem and the corollary \ref{gammacadena}.
	\end{proof}
	\begin{Teo}\label{repsubdirecta}
		Every $PMV_f$-algebra is isomorphic to a subdirect product of $PMV_f$-chains.
	\end{Teo}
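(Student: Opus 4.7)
The plan is to mimic the classical Chang representation theorem (Theorem \ref{repchang}) for $MV$-algebras, using the fact established in Proposition \ref{espectros} that the extra structure of a $PMV_f$-algebra is transparent to the ideal theory of the underlying $MV$-algebra. Concretely, I will represent $A$ by the family of quotients $A/P$ indexed by $P \in Spec(A)$, the prime spectrum of the underlying $MV$-algebra.

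First I would note that for any $P \in Spec(A)$, Proposition \ref{espectros} gives $Id_W(A)=Id(A)$, so $P$ is in particular a $PMV_f$-ideal (it absorbs products because $xP \subseteq P \wedge A = P$ via $xp \leq x \wedge p \leq p$). Hence by Proposition \ref{A/I}, $A/P$ is a $PMV_f$-algebra. Moreover, since $P$ is prime as an $MV$-ideal, the quotient $A/P$ is a totally ordered $MV$-algebra (a standard fact from the theory of $MV$-algebras), and therefore $A/P$ is a $PMV_f$-chain.

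Next I would consider the canonical homomorphism
\begin{equation*}
\Phi \colon A \longrightarrow \prod_{P \in Spec(A)} A/P, \qquad a \longmapsto \bigl([a]_P\bigr)_{P \in Spec(A)}.
\end{equation*}
This is a $PMV_f$-homomorphism (componentwise the quotient maps respect $\oplus$, $\neg$ and $\cdot$), and each projection $\pi_P \circ \Phi$ is surjective. It remains to show $\Phi$ is injective, i.e.\ that $\bigcap_{P \in Spec(A)} P = \{0\}$; but this follows directly from Chang's representation theorem applied to the underlying $MV$-algebra of $A$, since $a \neq 0$ forces the principal ideal generated by $a$ to miss some prime $MV$-ideal. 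Putting these pieces together yields that $\Phi$ is the desired subdirect embedding into a product of $PMV_f$-chains.

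I do not anticipate any genuine obstacle: the only subtle point is the verification that prime $MV$-ideals give $PMV_f$-chain quotients in the presence of the product, and this is immediate from $xy \leq x \wedge y$ together with Propositions \ref{espectros} and \ref{A/I}. Everything else reduces to invoking Chang's subdirect representation on the $MV$-reduct.
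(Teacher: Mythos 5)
Your proposal is correct and follows essentially the same route as the paper: the canonical map $a \mapsto ([a]_P)_{P \in Spec(A)}$ into the product over the prime spectrum of the underlying $MV$-algebra, with Proposition \ref{espectros} guaranteeing that each prime $MV$-ideal is a $PMV_f$-ideal so that each quotient $A/P$ is a $PMV_f$-chain and the map respects the product. Your version merely spells out the injectivity and the chain property of $A/P$ a bit more explicitly than the paper does.
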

	\begin{proof}
		For any $PMV_f$-algebra $A$  there is an injective homomorphism of $MV$-algebras, $$\widehat{(\,)}:A\rightarrow\prod\limits_{P \in Spec(A)} A/P,$$ mapping $a \mapsto \widehat{a}$ where $\widehat{a} \colon Spec\,A \to \bigsqcup\limits_{P \in Spec\, A} A/P  $ with $\widehat{a}(P)= [a]_P.$ It is a homomorphism of $PMV_f$-algebras, and $\pi_P \circ \widehat{(\,)} \colon A \to A/P$ is a surjective homomorphism for each prime ideal $P \in Spec(A)$. In fact, every prime ideal $P$ in the $MV$-algebra is an ideal in the $PMV_f$-algebra, as proven in proposition \ref{espectros}, where $\widehat{ab}= \widehat{a} \cdot \widehat{b},$ due to the correspondence between ideals and  congruences in any $PMV_f$-algebra.
	\end{proof}
	\begin{Cor}
		Every $PMV_f$-equation (see \cite{Cignoli}, section 1.4) that holds in any  $PMV_f$-chain holds in every $PMV_f$-algebra.
	\end{Cor}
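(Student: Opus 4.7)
The plan is to deduce the corollary as a direct consequence of the subdirect representation established in Theorem \ref{repsubdirecta}, which is the standard route from ``subdirectly irreducible generates'' to ``equations transfer.''

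First I would fix a $PMV_f$-equation $t_1(x_1,\ldots,x_n) = t_2(x_1,\ldots,x_n)$ that is assumed to hold in every $PMV_f$-chain, an arbitrary $PMV_f$-algebra $A$, and arbitrary elements $a_1,\ldots,a_n \in A$. The goal is to show $t_1(a_1,\ldots,a_n)=t_2(a_1,\ldots,a_n)$ in $A$. By Theorem \ref{repsubdirecta}, there is an injective $PMV_f$-homomorphism $\widehat{(\,)}\colon A\hookrightarrow\prod_{P\in Spec(A)}A/P$. Since each quotient $A/P$ is a $PMV_f$-chain (as noted in the proof of Theorem \ref{repsubdirecta}, every prime ideal of the subjacent $MV$-algebra is already an ideal of $A$ by Proposition \ref{espectros}, and then $A/P$ is a totally ordered $PMV_f$-algebra by Proposition \ref{A/I}), the assumed equation is valid in each factor $A/P$.

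Next I would use the fact that a term of the signature $(\oplus,\cdot,\neg,0)$ is preserved by homomorphisms and products, so the evaluation of $t_1$ and $t_2$ at $(\widehat{a_1},\ldots,\widehat{a_n})$ in $\prod_{P}A/P$ is performed componentwise. For each $P$ the $P$-th coordinate equals $t_i([a_1]_P,\ldots,[a_n]_P)\in A/P$, and by hypothesis these coincide for $i=1,2$. Hence $\widehat{t_1(a_1,\ldots,a_n)} = t_1(\widehat{a_1},\ldots,\widehat{a_n}) = t_2(\widehat{a_1},\ldots,\widehat{a_n}) = \widehat{t_2(a_1,\ldots,a_n)}$, and since $\widehat{(\,)}$ is injective I conclude $t_1(a_1,\ldots,a_n) = t_2(a_1,\ldots,a_n)$ in $A$.

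There is essentially no obstacle here: all the real work is already done in Theorem \ref{repsubdirecta}. The only point that deserves attention is verifying that $\widehat{(\,)}$ is a genuine $PMV_f$-homomorphism (not merely an $MV$-homomorphism), so that the term evaluation commutes with the embedding on the product symbol; this is exactly the content recalled at the end of the proof of Theorem \ref{repsubdirecta} via the correspondence between ideals and congruences. With that in hand, the corollary is immediate.
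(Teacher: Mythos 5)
Your proof is correct and follows exactly the route the paper intends: the corollary is stated without proof as an immediate consequence of Theorem \ref{repsubdirecta}, and your argument simply spells out the standard fact that equations are evaluated componentwise in the subdirect product and reflected by the injective $PMV_f$-homomorphism $\widehat{(\,)}$. Nothing is missing.
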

	\begin{Cor}\label{mwequation}
		In every  $PMV_f$-algebra it holds that $a(b \wedge c)= ab \wedge ac$,\, $a(b \vee c)= ab \vee ac$.
	\end{Cor}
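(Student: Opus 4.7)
The plan is to invoke the preceding corollary: any $PMV_f$-equation valid on every $PMV_f$-chain holds throughout the whole variety. So the entire task reduces to checking the two identities on a totally ordered $PMV_f$-algebra.

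Fix a $PMV_f$-chain $A$ and elements $a,b,c \in A$. Because the order is total, one may assume without loss of generality that $b \le c$, whence $b \wedge c = b$ and $b \vee c = c$. By the monotonicity of the product established in Proposition \ref{propiedades}(i), $b \le c$ implies $ab \le ac$, hence $ab \wedge ac = ab$ and $ab \vee ac = ac$. Substituting,
\begin{equation*}
a(b\wedge c) \;=\; ab \;=\; ab \wedge ac, \qquad a(b\vee c) \;=\; ac \;=\; ab \vee ac.
\end{equation*}
Thus both identities hold in every $PMV_f$-chain.

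Applying the previous corollary (which in turn rests on Theorem \ref{repsubdirecta}), both equations transfer to an arbitrary $PMV_f$-algebra, completing the proof. There is no real obstacle here: the only step that requires comment is the reduction to chains, and that is precisely what the subdirect representation theorem delivers; once inside a chain, the monotonicity of the product makes both identities immediate.
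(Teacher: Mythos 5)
Your proof is correct and follows exactly the route the paper intends: reduce to chains via the preceding corollary (resting on Theorem \ref{repsubdirecta}) and then use totality of the order together with the monotonicity of the product from Proposition \ref{propiedades}. The paper leaves the chain computation implicit (it spells out the analogous argument only for rings in the Observation following Proposition \ref{Gamma}), so your write-up simply makes explicit what the paper takes for granted.
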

	\begin{Cor}\label{GammaPMV}
		If $(R,u)$ is a semi-low $l_u$-ring, $\Gamma((R,u))$ is a $PMV_f$-algebra.
	\end{Cor}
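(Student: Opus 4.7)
The plan is to establish this corollary by pairing Proposition \ref{Gamma} with a subdirect representation of the semi-low $l_u$-ring $R$ by chain quotients, and then transporting equations across the $\Gamma$ functor.

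First I would verify that $\Gamma(R,u)$ is already a $PMV$-algebra. Since $u\in [0,u]$, the semi-low hypothesis yields $u^2 = u\cdot u \leq u\wedge u = u$, so Proposition \ref{Gamma} applies and $\Gamma(R,u)$ is a $PMV$-algebra; by Theorem \ref{contenencias} it is in particular an $MVW$-rig. The axiom $ab\leq a\wedge b$ of a $PMV_f$-algebra is exactly the defining semi-low condition restricted to $[0,u]$, so it holds for free. What remains is to establish the distributivity identity $a(b\ominus c)=ab\ominus ac$ on $\Gamma(R,u)$.

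For this I would reduce to chains via subdirect representation. Every $l$-group satisfies $\bigcap_{P\in Spec_g(R)} P=\{0\}$, yielding an injective $l_u$-group homomorphism $R\hookrightarrow \prod_{P} R/P$. By Corollary \ref{l-L} each prime $l$-ideal is in fact an $L$-ideal, so this map is also a ring embedding; Corollary \ref{cocienteanillo} then ensures that each quotient $R/P$ is a semi-low $l_u$-ring, and primeness of $P$ makes $R/P$ a chain. Restricting to the segment $[0,u]$ and invoking Theorem \ref{cociente-ideales} produces an embedding
\[
\Gamma(R,u)\hookrightarrow \prod_{P\in Spec_g(R)} \Gamma(R/P,u_P),
\]
respecting $\oplus$, $\neg$, $0$ and the product, whose targets are $PMV_f$-chains by Corollary \ref{gammacadena}.

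Since the defining conditions of a $PMV_f$-algebra are all equations (the $MV$-algebra identities, the semigroup axioms, the $MVW$-rig axioms, $ab\ominus (a\wedge b)=0$, and $a(b\ominus c)=ab\ominus ac$), the class $\mathcal{PMV}_f$ is a variety, hence closed under products and subalgebras. Therefore $\Gamma(R,u)$, being a substructure of a product of $PMV_f$-chains, satisfies all $PMV_f$-equations and is itself a $PMV_f$-algebra. The main obstacle is the compatibility in the middle step: one has to know that the standard subdirect representation of $l_u$-groups by chain quotients remains valid in the ring and $\Gamma$-settings, i.e.\ that each $R/P$ is genuinely a chain semi-low $l_u$-ring and that $\Gamma$ commutes with the quotient. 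Corollaries \ref{l-L}, \ref{cocienteanillo}, and Theorem \ref{cociente-ideales} handle exactly these three points, after which the equational-closure argument closes out the proof.
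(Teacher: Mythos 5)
Your proof is correct and follows essentially the same route as the paper: represent the object subdirectly as a product of chain quotients, apply Corollary \ref{gammacadena} to see that each factor is a $PMV_f$-chain, and conclude by the equational character of the class. The only cosmetic difference is that you decompose $R$ along $Spec_g(R)$ and transport via Theorem \ref{cociente-ideales}, whereas the paper decomposes $\Gamma(R,u)$ directly along $Spec(\Gamma(R,u))$; by the ideal correspondence of Theorem \ref{isomorfismo-de-l-ideales} these are the same decomposition.
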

	\begin{proof}
		From the corollary \ref{gammacadena} it follows that every $\Gamma(R,u)/P $ is a $PMV_f$-chain for every $P,$ and the other hand, $\Gamma((R,u))$ is isomorphic to a subdirect product of  $\prod\limits_{P \in Spec(\Gamma(R,u))}\Gamma(R,u)/P$
		and therefore $\Gamma((R,u))$ is a $PMV_f$-algebra.
	\end{proof}
	\begin{Teo}
		Every semi-low $l_u$-ring  $R$ is isomorphic to a subdirect product of chains. 
	\end{Teo}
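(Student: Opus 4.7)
The plan is to transport the subdirect representation of $PMV_f$-algebras (Theorem~\ref{repsubdirecta}) up to semi-low $l_u$-rings through the ideal correspondence given by Theorem~\ref{isomorfismo-de-l-ideales} and Proposition~\ref{isomorfismo-de-ideales-2}.

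First I would set $A:=\Gamma(R,u)$, a $PMV_f$-algebra by Corollary~\ref{GammaPMV}. For each prime ideal $P\in Spec(A)$, the image $P^\sharp=\phi(P)$ is an $l$-ideal of the underlying $l_u$-group of $R$, and by Corollary~\ref{l-L} it is an $L$-ideal of the ring $R$. The corollary following Theorem~\ref{cociente-ideales} then yields that $R/P^\sharp$ is a chain semi-low $l_u$-ring, so $P^\sharp\in Spec_g(R)$. I would therefore define the canonical map
$$
\Psi \colon R \longrightarrow \prod_{P\in Spec(A)} R/P^\sharp, \qquad r\longmapsto \bigl([r]_{P^\sharp}\bigr)_P.
$$
Each coordinate map is a surjective homomorphism of $l_u$-rings onto a chain, so the only thing to check is injectivity, i.e.\ that $\bigcap_{P\in Spec(A)}P^\sharp = \{0\}$.

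For injectivity I would take $r\in\bigcap_P P^\sharp$. Since each $P^\sharp$ is an $l$-ideal, $|r|\in P^\sharp$, hence $|r|\wedge u \in P^\sharp\cap[0,u]=P$ for every $P\in Spec(A)$. Applying Theorem~\ref{repsubdirecta} to $A$ gives $\bigcap_{P\in Spec(A)}P=\{0\}$, so $|r|\wedge u = 0$ in $A$.

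The main obstacle is the final $l$-group step: concluding from $|r|\wedge u=0$ together with the strong unit condition $|r|\leq nu$ that $|r|=0$. I would argue using the Riesz decomposition property available in any $l$-group: if $x\wedge y=0=x\wedge z$ with $x,y,z\geq 0$, then any $w$ with $0\leq w\leq x$ and $w\leq y+z$ decomposes as $w=w_1+w_2$ with $0\leq w_1\leq y$ and $0\leq w_2\leq z$, forcing $w_1=w_2=0$, so $x\wedge(y+z)=0$. Induction on $n$ gives $|r|\wedge nu=0$, and combined with $|r|\leq nu$ this yields $|r|=|r|\wedge nu=0$, hence $r=0$ and $\Psi$ is the desired subdirect embedding.
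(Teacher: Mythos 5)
Your proposal is correct, but it takes a genuinely different route from the paper. The paper's proof is very short: it takes the subdirect representation of the underlying $l_u$-group by the quotients $R/P$, $P\in Spec_g(R)$ (whose injectivity is already known from $l$-group theory), and merely checks that this group embedding is in fact a ring homomorphism onto chain quotients, which follows from Corollary \ref{fanillo1} (equivalently Corollary \ref{l-L}: every $l$-ideal of a semi-low $l_u$-ring is an $L$-ideal). You instead rebuild the injectivity from the $MV$-side: you index by $Spec(\Gamma(R,u))$, transport primes through $\phi$, reduce $\bigcap_P P^\sharp=\{0\}$ to $\bigcap_P P=\{0\}$ (Theorem \ref{repsubdirecta}), and finish with the elementary $l$-group facts that $x\wedge y=x\wedge z=0$ implies $x\wedge(y+z)=0$ and that the strong unit then forces $|r|=0$. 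Both arguments are sound; the paper's buys brevity by leaning on the known $l_u$-group representation theorem, while yours is more self-contained and makes explicit exactly where the strong unit and the ideal correspondence enter. Two small remarks: your citation of the corollary after Theorem \ref{cociente-ideales} for the claim that $R/P^\sharp$ is a chain points the wrong way (that corollary assumes $P^\sharp$ is $l$-prime); what you actually need is that $\phi$ carries $MV$-primes to $l$-primes, which holds because $\Gamma(R/P^\sharp,u)\cong\Gamma(R,u)/P$ is a chain and $R/P^\sharp$ is generated by its unit segment (Theorem \ref{generado} b)). Also, since $\phi$ is a bijection between the two spectra, your indexing by $Spec(\Gamma(R,u))$ yields the same family of chain quotients as the paper's indexing by $Spec_g(R)$.
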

	\begin{proof}
		It is enough to show that the injective homomorphism of $l_u$-groups given by $$\widehat{(-)}^g \colon R \to  \prod\limits_{P \in Spec_g(R)} R/P; \hspace{0.5cm} x \longmapsto [x]_P,$$
		is an $l_u$-ring homomorphism. In fact, from theorem 7.2.2 of \cite{Cignoli} and corollary \ref{fanillo1} it follows directly that $R/P$ is a semi-low $l_u$-ring for every $P \in Spec_g(R)$.
	\end{proof}
	\subsubsection{Extension of the functors $(-)^\sharp$ y $\Gamma$}
	The diagram on the left  will be completed to extend the construction of Chang to the functor $\mathcal{PMV}_f \stackrel{(-)^\sharp}{\longrightarrow} \mathcal{LR}_u$, and then it will be proven that this extends the equivalence from the first row to an equivalence in the second. \\ 
	\centerline{
		\xymatrix@R=0.5cm{
			\mathcal{C}\mathcal{PMV}_f\ar[r]^{(-)^\sharp}\ar@{_{(}->}[d]_{i_\mathcal{M}} &\mathcal{C}\mathcal{R}_{u}\mathcal{E}\ar@{_{(}->}[d]_{i_{\mathcal{R}}} \\
			\mathcal{PMV}_f&\mathcal{LR}_u
		} \hspace{1cm} \xymatrix@R=0.5cm{
			\mathcal{C}\mathcal{LR}_u\ar[r]^{\Gamma}\ar@{_{(}->}[d]_{i_\mathcal{R}} &\mathcal{C}\mathcal{PMV}_f\ar@{_{(}->}[d]_{i_{\mathcal{M}}} \\
			\mathcal{LR}_u\ar[r]^{\Gamma}&\mathcal{PMV}_f
	} }
	
	\begin{Def}
		For any $PMV_f$-algebra $A$, we define $$A^\circ=\{(0,\widehat{a}):a\in A\}\subseteq  \prod\limits_{P\in Spec\, A}(A/P)^\sharp.$$
	\end{Def}
	\begin{Def}[Associate $l_u$-ring]
		For any  $PMV_f$-algebra $A$ we define $A^\sharp= gen(A^\circ)$ as the $l$-ring generated in the $l$-ring $\prod\limits_{P\in Spec\, A}(A/P)^\sharp$.
	\end{Def}
	
	\textbf{Notation.} $|A^*|=\left\{x\in \prod\limits_{P\in Spec\,A}(A/P)^*\mid x=\sum\limits_{i=1}^n \epsilon_{i}(0,\widehat{a_i}), a_i\in A,  n \in \mathbb{N} \right\}.$\\

	\begin{Afir}\label{anum} $A^\sharp$ is a semi-low $l_u$-ring and $A^\sharp = \left\langle |A^*|,+,\cdot,u,\leq\right\rangle$ where  $A^*=\left\langle |A^*|,+,u,\leq\right\rangle$ is the $l_u$-group associated to the subjacent $MV$-algebra $A$, and the product is defined as follows:
		$$\begin{array}{rcl}
		\varphi:|A^*|^2&\longrightarrow& |A^*|\\
		\left(x,y\right) &\longmapsto& \varphi\left(x,y\right):= x\cdot y.
		\end{array}$$
		with $x=\sum\limits_{i=1}^n \epsilon_i (0,\widehat{a_i}),$ $y=\sum\limits_{j=1}^m \delta_j (0,\widehat{b_j}),$\,\, $x\cdot y=\sum\limits_{i,j=1}^{nm}\epsilon_i\delta_k (0,\widehat{a_ib_j}),$ $\epsilon_i,\delta_j\in \{-1,1\}$ y, $a_i,b_j\in A,$. 
	\end{Afir}
	
	\begin{proof}
		$\varphi$ is well defined because for each  $P\in Spec(A)$ the product  $(x\cdot y)(P)=x(P)\cdot y(P)$ coincides with the product given in definition \ref{defanillo} and described in corollary \ref{productocadena}. From theorem \ref{cnumeral} it follows that the operation is associative and distributive.\\
		On the other hand, $\left\langle |A^*|,+,\cdot,\leq\right\rangle$ is a semi-low $l_u$-ring because for every $\mathbf{0}\leq x\, ,y \leq \mathbf{u}$,
		\begin{center} $ x=\sum\limits_{i=1}^n \left(0,\widehat{a_i}\right)=\left(0,\oplus_{i=1}^n \widehat{a_i}\right)$ e $y=\sum\limits_{j=1}^m \left(0,\widehat{b_j}\right)=\left(0,\oplus_{j=1}^m \widehat{b_j}\right),$\end{center}
		
		$(x\cdot y)(P)=x(P)\cdot y(P)
		=\left(0,\left[\oplus a_i\right]_P\right)\cdot\left(0,\left[\oplus b_j\right]_P\right)=\left(0,\left[\oplus a_i\right]_P\cdot\left[\oplus b_j\right]_P\right)
		\leq  \left(0,\left[\oplus a_i\right]_P\wedge\left[\oplus b_j\right]_P\right)
		= \left(0,\left[\oplus a_i\right]_P\right)\wedge\left(0,\left[\oplus b_j\right]_P\right)
		= x(P)\wedge y(P).$\\
		
		Since $A^\circ\subseteq |A^*|$, and every $l_u$-ring  $H$ that contains  $A^\circ,$ must contain all finite sums and products of elements of $A^\circ,$ 
		$\left\langle |A^*|,+,\cdot,u,\leq\right\rangle\subseteq H.$ 
	\end{proof}
	
	\begin{Def}\label{hsharp}
		For any $h:A\rightarrow B$ in the category $\mathcal{PMV}_f,$ we define $h^\sharp:A^\sharp \rightarrow B^\sharp$ en $\mathcal{LR}_u$ by $h^\sharp \left(\sum\limits_{i=1}^n\epsilon_i(0,\widehat{a_i})\right):=\sum\limits_{i=1}^n\epsilon_i\left(0,\widehat{h(a_i)}\right).$
	\end{Def}
	
	\begin{Teo}
		The application $(-)^\sharp \colon \mathcal{PMV}_f \rightarrow \mathcal{LR}_u$ that assigns to each $PMV_f$-algebra $A$ the $l_u$-ring $A^\sharp$, is functorial.
	\end{Teo}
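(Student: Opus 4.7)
The plan is to verify the three ingredients that make $(-)^\sharp$ a functor: the assignment $h \mapsto h^\sharp$ is well-defined and yields a morphism in $\mathcal{LR}_u$, it preserves identities, and it respects composition. Since by Affirmation \ref{anum} the $l$-ring $A^\sharp$ has underlying set $|A^*|$ consisting of finite sums $\sum_i \epsilon_i(0,\widehat{a_i})$ with $\epsilon_i\in\{-1,1\}$ and $a_i\in A$, it suffices to analyse $h^\sharp$ on this generating family and then extend by additivity and bilinearity.

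First I would address the well-definedness of Definition \ref{hsharp}. The additive $l_u$-group structure of $A^\sharp$ is exactly the $l_u$-group $A^*$ canonically associated to the underlying $MV$-algebra of $A$ via the Chang--Dubuc--Poveda construction. The $MV$-algebra homomorphism underlying $h$ induces, by the functoriality of $(-)^*$ proved in \cite{Yuri3}, a unique $l_u$-group homomorphism $h^*\colon A^*\to B^*$ whose action on the generators is $(0,\widehat{a})\mapsto (0,\widehat{h(a)})$. The map $h^\sharp$ introduced in Definition \ref{hsharp} therefore coincides with $h^*$ at the $l_u$-group level, which automatically yields well-definedness, additivity, positivity, and preservation of the strong unit $\mathbf{u}=(0,\widehat{\neg 0})$.

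Next I would verify multiplicativity. On generators the computation is immediate: using that $h$ preserves the $PMV_f$-product,
$h^\sharp\bigl((0,\widehat{a})\cdot(0,\widehat{b})\bigr)=h^\sharp(0,\widehat{ab})=(0,\widehat{h(ab)})=(0,\widehat{h(a)h(b)})=(0,\widehat{h(a)})\cdot(0,\widehat{h(b)})=h^\sharp(0,\widehat{a})\cdot h^\sharp(0,\widehat{b})$.
Then distributivity of the product in $A^\sharp$ and $B^\sharp$ (given by Affirmation \ref{anum}) together with additivity of $h^\sharp$ propagate the identity to arbitrary products $\bigl(\sum_i \epsilon_i(0,\widehat{a_i})\bigr)\cdot\bigl(\sum_j \delta_j(0,\widehat{b_j})\bigr)$. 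The identity axiom $(\mathrm{id}_A)^\sharp=\mathrm{id}_{A^\sharp}$ and compositionality $(g\circ h)^\sharp=g^\sharp\circ h^\sharp$ are then checked directly on the generating set $A^\circ$ and extended the same way.

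The main obstacle is genuinely the well-definedness: a single element of $A^\sharp$ admits many presentations $\sum_i \epsilon_i(0,\widehat{a_i})$, because the embedding $a\mapsto(0,\widehat a)$ is only an $MV$-morphism, not additive; as Theorem \ref{ecuchang} shows componentwise, $(0,\widehat a)+(0,\widehat b)$ equals $(0,\widehat{a\oplus b})$ or $(1,\widehat{a\odot b})$ depending on whether $a\oplus b=u$ at each prime. Rather than attempting a direct combinatorial verification across all presentations, the cleanest route is to invoke the already-established functoriality of the $MV$-to-$l_u$-group construction in \cite{Yuri3} to produce $h^*$ abstractly, and then simply observe that its restriction to $A^\circ$ agrees with the formula of Definition \ref{hsharp}. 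This sidesteps the ambiguity of presentations and reduces the remainder of the proof to the routine multiplicative checks above.
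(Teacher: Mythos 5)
Your proposal is correct and follows essentially the same route as the paper: both delegate the group-level well-definedness of $h^\sharp$ to the functoriality of the $MV$-algebra/$l_u$-group construction in \cite{Yuri3} (Theorem 3.3 there), and then verify multiplicativity only on the generators $(0,\widehat{a})$ of $A^\circ$, extending by the bilinearity built into the product of Affirmation \ref{anum}. The only cosmetic difference is that the paper phrases the generator computation fiberwise over each $P\in Spec(A)$ via the chain-level result (Proposition \ref{numfun}), while you compute with the global formula $(0,\widehat{a})\cdot(0,\widehat{b})=(0,\widehat{ab})$; these amount to the same check.
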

	\begin{proof}
		Since for every $h:A\rightarrow B$ in the category $\mathcal{PMV}_f$	$h^\sharp$ is a homomorphism of $l_u$-rings and $\mathbf{h}^\sharp$ is a homomorphism of $l$-rings such that the following diagram commutes \\
		
		\centerline{
			\xymatrix@R=0.5cm{
				A\ar[r]^-{\cong}\ar[d]_{h} & A^\circ \ar @{^{(}->}[r]^-{=} \ar @{-->}[d]^{h^\sharp}& \Gamma(A^\sharp,u)\ar @{^{(}->}[r]\ar @{-->}[d]^{\Gamma(h^\sharp)} & A^\sharp\ar @{-->}[d]^{h^\sharp}\ar @{^{(}->}[r]& \prod\limits_{P\in Spec\, A}(A/P)^\sharp\ar[d]^{\mathbf{h}^\sharp} \\  
				B\ar[r]^-{\cong}& B^\circ\ar @{^{(}->}[r]^-{=}  &\Gamma(B^\sharp,u )\ar  @{^{(}->}[r]& B^\sharp \ar @{^{(}->}[r] & \prod\limits_{Q\in Spec\, B}(B/Q)^\sharp \\
		}}
		According to theorem 3.3 on \cite{Yuri3}, $h^\sharp$ is a homomorphism of $l_u$-groups  and $\mathbf{h}^\sharp$ is a homorphims of $l$-groups. Recall that on the proof of theorem 3.3 on \cite{Yuri3}, for any $Q\in Spec(B)$ the well defined morphism $h|_Q \colon A/h^{-1}Q \to B/Q$; $h|_Q\left([a]_{h^{-1}Q}\right)= [h(a)]_Q$, makes the following diagram commute 
		
		\centerline{
			\xymatrix@R=0.5cm@C=0.5cm{
				A/h^{-1}Q\ar[r]^-{i}\ar[d]_{h|Q} & (A/h^{-1}Q)^\sharp\ar[d]^{(h|Q)^\sharp} \\  
				B/Q\ar[r]^-{i}&(B/Q)^\sharp \\
		}}
		and therefore the group homomorphims $\mathbf{h}^\sharp$ can be defined as follows:\\ given $\sigma \in \prod\limits_{P\in Spec\, A}(A/P)^\sharp,$ $\mathbf{h}^\sharp(\sigma)(Q)= (h|_Q)^\sharp(\sigma(h^{-1}Q))$, $(\mathbf{h_1h_2})^\sharp= \mathbf{h_1}^\sharp\mathbf{h_2}^\sharp$, and  $\mathbf{h}^\sharp|_{A^\sharp}= h^\sharp$.
		
		To finish the proof, it is enough to show that  $h^\sharp$ respects products in the gene\-rators of $A^\circ.$ \\
		Given $P\in Spec\,A,$ it follows from proposition \ref{numfun} that \\ $h^\sharp\left[\left(0,\widehat{a}\right)\left(0,\widehat{b}\right)\right](P)=h^\sharp\left[ (0,[a]_P) (0,[b]_P) \right]
		=h^\sharp\left[ (0,[a]_P)\right] h^\sharp\left[(0,[b]_P) \right]$ .

		As seen in the affirmation \ref{anum}, $A^\sharp$ is a semi-low $l_u$-ring, and if $h \colon A \to B$ is a homomorphism of $PMV_f$-algebras, $h^\sharp \colon A^\sharp \to B^\sharp$ is a homomorphism of semi-low $l_u$-rings. 
		
		On the other hand, given $\xymatrix@C=12pt{A \ar[r]^-h & B \ar[r]^-g & C} \in \mathcal{PMV}_f$, $(gh)^\sharp = g^\sharp h^\sharp$ follows directly from definition \ref{hsharp}.
	\end{proof}
	\begin{Teo}
		$\Gamma \colon \mathcal{LR}_u \to \mathcal{PMV}_f$ is a functor, where $\Gamma (R,u)= [0,u]$ and $\Gamma(h)= h|_{[0,u]}$ for every homomorphism of $l_u$-rings $h \colon R \to R'$.
	\end{Teo}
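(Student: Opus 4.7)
The plan is to show two things: first, that the object assignment $\Gamma(R,u)=[0,u]$ lands in $\mathcal{PMV}_f$; and second, that for any morphism $h\colon(R,u)\to(R',u')$ in $\mathcal{LR}_u$ the restriction $h|_{[0,u]}$ is a morphism in $\mathcal{PMV}_f$, after which functoriality ($\Gamma(\mathrm{id})=\mathrm{id}$ and $\Gamma(gh)=\Gamma(g)\Gamma(h)$) is immediate because restriction commutes with composition.

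For the object part, Corollary \ref{GammaPMV} already gives that $\Gamma(R,u)$ is a $PMV_f$-algebra for every semi-low $l_u$-ring $R$, so there is nothing to prove here.

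For the morphism part, the key is that a morphism in $\mathcal{LR}_u$ is an $l_u$-ring homomorphism, hence preserves the strong unit ($h(u)=u'$), the ring product, the sum, the negation and the order. I would first note that $0\le x\le u$ implies $0=h(0)\le h(x)\le h(u)=u'$, so $h$ restricts to a map $h|_{[0,u]}\colon[0,u]\to[0,u']$. Then I would check, one by one, the three $MV$-operations and the product as defined on $\Gamma$: $h(\neg x)=h(u-x)=u'-h(x)=\neg h(x)$; $h(x\oplus y)=h((x+y)\wedge u)=(h(x)+h(y))\wedge u'=h(x)\oplus h(y)$ using that $h$ preserves $+$, $\wedge$ and $u$; $h(0)=0$; and $h(x\cdot y)=h(x)\cdot h(y)$ because $h$ is a ring homomorphism and the product on $\Gamma$ coincides with the ring product by construction (this is well defined on $[0,u]$ precisely by the semi-low hypothesis $xy\le x\wedge y\le u$). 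Thus $h|_{[0,u]}$ is a homomorphism of $PMV_f$-algebras.

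Finally, $\Gamma(\mathrm{id}_R)=\mathrm{id}_R|_{[0,u]}=\mathrm{id}_{[0,u]}=\mathrm{id}_{\Gamma(R,u)}$, and given $R\xrightarrow{h}R'\xrightarrow{g}R''$ in $\mathcal{LR}_u$, $\Gamma(gh)=(gh)|_{[0,u]}=g|_{[0,u']}\circ h|_{[0,u]}=\Gamma(g)\Gamma(h)$, which closes the proof. I do not expect any serious obstacle here: the substantive content is already carried by Corollary \ref{GammaPMV} (which uses the subdirect representation by chains to transfer the $PMV_f$ identities from the chain case), so what remains is only the bookkeeping that restriction of $l_u$-ring morphisms preserves each of the $PMV_f$ operations, and the one point worth singling out is the preservation of the truncated sum $\oplus$, which works because $h$ preserves both $+$ and $\wedge$ and sends $u$ to $u'$.
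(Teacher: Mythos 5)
Your proposal is correct and follows essentially the same route as the paper: the paper's proof simply cites Corollary \ref{GammaPMV} for the object part and Affirmation \ref{Gamma-funt} for the morphism part and composition, which is exactly the decomposition you use. The only difference is that you spell out the verification that the restriction preserves the $MV$-operations (in particular the truncated sum via preservation of $+$, $\wedge$ and the strong unit), which the paper's Affirmation \ref{Gamma-funt} subsumes under ``by construction $\Gamma(\alpha)$ is a homomorphism of $MV$-algebras.''
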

	\begin{proof}
		It is follows directly from corollary \ref{GammaPMV} and the affirmation \ref{Gamma-funt}.
	\end{proof}
	
	\subsubsection{The equivalence}
	\begin{Teo} Given any $PMV_f$-algebra  $A$ and semi-low $l_u$-ring $(R,u),$ the fo\-llowing homomorphisms are isomorphisms of $PMV_f$-algebras and semi-low $l_u$-rings. 
		\begin{center}
			$A\cong \Gamma(A^\sharp, u)$ and $(R,u)\cong (\Gamma(R,u))^\sharp.$
		\end{center}
	\end{Teo}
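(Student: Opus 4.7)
The plan is to derive both isomorphisms from the chain case (Theorem~\ref{Equivalencia-cadenas}) by combining the subdirect representations established in Theorem~\ref{repsubdirecta} and its $l_u$-ring analogue with Theorem~\ref{generado}, which is the bridge that passes between a $PMV_f$-algebra sitting inside an $l_u$-ring and the subring it generates.

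For $A \cong \Gamma(A^\sharp, u)$, I would define $i \colon A \to \Gamma(A^\sharp, u)$ by $a \mapsto (0, \widehat{a})$. Well-definedness, injectivity, and compatibility with the $MV$-operations follow componentwise from the chain-level isomorphism of Theorem~\ref{Equivalencia-cadenas} composed with the subdirect embedding $A \hookrightarrow \prod_{P \in Spec(A)} A/P$; multiplicativity $i(ab) = i(a)i(b)$ is built into the very definition of the product on $A^\sharp$ (Affirmation~\ref{anum}). The remaining step is surjectivity, which I would handle as follows: $A^\circ$ is a $PMV_f$-subalgebra isomorphic to $A$ and contained in $[0, \mathbf{u}]$ of the ambient $l_u$-ring $\prod_P (A/P)^\sharp$, while by construction $A^\sharp$ is the subring generated by $A^\circ$. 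Applying Theorem~\ref{generado}-a) with $A^\circ$ in the role of ``$A$'' yields $A^\circ = \Gamma(A^\sharp, \mathbf{u})$, whence $i$ is surjective.

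For $(R,u) \cong (\Gamma(R,u))^\sharp$, set $B = \Gamma(R,u)$. The key is to identify the ambient product $\prod_{P \in Spec(B)} (B/P)^\sharp$ with $\prod_{Q \in Spec_g(R)} R/Q$ via the bijection $P \leftrightarrow P^\sharp$ supplied by Theorem~\ref{isomorfismo-de-l-ideales} and Corollary~\ref{l-L}, together with Theorem~\ref{cociente-ideales} and the chain equivalence, which deliver $(B/P)^\sharp \cong (\Gamma(R/P^\sharp, u))^\sharp \cong R/P^\sharp$. Tracing a generator $(0, \widehat{b}) \in B^\circ$ through this identification (applying $\upsilon(m,x) = mu + x$ on each factor), it lands on $([b]_{P^\sharp})_{P^\sharp}$, which is precisely the image of $b \in B \subseteq R$ under the subdirect embedding of $R$ into $\prod_Q R/Q$. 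Hence $B^\circ$ corresponds to the embedded copy of $B$, and $B^\sharp = gen(B^\circ)$ corresponds to the subring of $R$ generated by $B = [0,u]$. By Theorem~\ref{generado}-b), this subring is all of $R$, giving the desired isomorphism $\upsilon \colon B^\sharp \to R$.

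The main obstacle is the bookkeeping in the second part: checking that the chain-level iso sends $(0, \widehat{b})$ to $[b]_{P^\sharp}$ on each factor requires carefully unwinding Theorem~\ref{cociente-ideales} together with the formula $\upsilon(m,x) = mu + x$ and the identification $B/P \cong \Gamma(R/P^\sharp, u)$ via $[b]_P \mapsto [b]_{P^\sharp}$. Once that compatibility is verified, injectivity of $\upsilon$ is automatic from the injectivity of the subdirect embedding of $R$, surjectivity is immediate from Theorem~\ref{generado}-b), and the fact that $\upsilon$ is a homomorphism of semi-low $l_u$-rings follows from the fact that all operations agree componentwise with those of $\prod_Q R/Q$.
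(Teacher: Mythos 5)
Your proposal is correct and follows essentially the same route as the paper: for $A\cong\Gamma(A^\sharp,u)$ you identify $A$ with $A^\circ$ inside $\prod_{P\in Spec\,A}(A/P)^\sharp$ and invoke Theorem~\ref{generado}-a) to get $A^\circ=\Gamma(A^\sharp,u)$, and for $(R,u)\cong(\Gamma(R,u))^\sharp$ you build the same fiberwise identification via Theorem~\ref{isomorfismo-de-l-ideales}, Theorem~\ref{cociente-ideales} and the chain-level maps $\upsilon$, concluding surjectivity from Theorem~\ref{generado}-b). This is exactly the paper's construction of the map $\tau_{_R}=\left[\widehat{(-)}^g\right]^{-1}\circ v\circ\Theta$ restricted to $\Gamma(R,u)^\sharp$.
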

	\begin{proof} 
		For the first isomorphism $A\cong A^\circ$ as $PMV_f$-algebras, with $A^\circ\subset A^\sharp$ as shown in the following commutative diagram 
		
		\centerline{
			\xymatrix@R=0.5cm@C=1.2cm{
				A\,\, \ar@{>->}[r]^-{\widehat{(-)}_A}& \prod\limits_{P\in Spec\,A} (A/P)\,\,\ar@{>->}[r]^-{i}&\prod\limits_{P\in Spec\,A} (A/P)^\sharp\\
				&\ar@{<-<}[ul]^{\cong}A^\circ\subset A^\sharp \ar@{^{(}->}[ur] &\\ }}
		
		where $i$ is built using the universal property as follows: \\
		\centerline{
			\xymatrix@R=0.7cm@C=0.5cm{
				A/P\ar[r]^{i_P} & (A/P)^\sharp\\
				\prod\limits_{P\in Spec\,A} A/P\ar[u]^{\pi_P} \ar@{-->}[r]^-{\exists\,!i}& \prod\limits_{P\in Spec\,A} (A/P)^\sharp \ar[u]_{\pi^\sharp_P}
		}}
		with $i_P$ the application defined for each $P$ as  
		$$\begin{array}{rccl}
		i_P:& A/P&\longrightarrow &(A/P)^\sharp \\
		&[a]_P&\longmapsto &(0, [a]_P).\\
		\end{array}$$
		Because of  theorem \ref{generado} \textit{a}), $A^\circ=\Gamma(A^\sharp,u),$ and so $A\cong \Gamma(A^\sharp, u).$
	\end{proof}
	
	On the other hand, the isomorphisms of the chain semi-low $l_u$-rings,  obtained from the Chang's construction in \ref{ecuchang}, in the  theorem \ref{Equivalencia-cadenas} on the fibers of $\Gamma(R,u)^\sharp$ determine an  isomorphism of semi-low $l_u$-rings $\tau_{_R}:\Gamma(R,u)^\sharp\longrightarrow (R,u)$, as follows:\\
	
	\centerline{
		\xymatrix@R=0.5cm@C=1.2cm{
			\Gamma(R,u)^\sharp\ar@{^{(}->}[r]_-{id}^-{\subset} \ar@{-->}[d]_{\tau_{_R}}& \prod\limits_{P\in Spec\,R}(\Gamma(R,u)/P\cap[0,u])^\sharp\,\,\ar@{>->}[r]^-{\Theta}_-{\cong}&\prod\limits_{P\in Spec\,R} \Gamma(R/P,u_P)^\sharp\ar@{->}[d]^{v}_{\cong}\\
			(R,u)\,\, \ar@{>->}[rr]^-{\widehat{(-)}^g} & &\prod\limits_{P\in Spec\,R} (R/P,u_P)                               
	}}
	where $\tau_{_R}(0,\hat{x}) = \left[\widehat{(-)}^g\right]^{-1}(v\Theta(0,\hat{x}))=\left[\widehat{(-)}^g\right]^{-1}(\hat{x}^g). $ 
	
	$\tau_{_R}$ is well defined because the homomorphism $\widehat{(-)}^g$ is injective. The fact that $\tau_{_R}$ is injective follows from the fact that for every $x,y \in \Gamma(R,u)$,  $\widehat{x}^g = \widehat{y}^g$ implies $x=y$, and so $\widehat{x}=\widehat{y}$, since $\widehat{(-)}$ is a homomorphism. $\tau_{_R}$ is surjective because for every $x \in (R,u)$ it holds that $x = \sum_{i=1}^{n} \epsilon_{i}x_i$ for some  $x_i \in [0,u]$ by the theorem \ref{generado}, \textit{b}). Consequently $\tau_{_R}( \sum_{i=1}^{n}\epsilon_i(0, \widehat{x_i}))= \sum_{i=1}^{n}\epsilon_i\tau_{_R}(0,\widehat{x_i})=x$.
	\begin{Teo}
		For every $A \in \mathcal{PMV}_f$ and $R \in \mathcal{LR}_u$ the isomorphisms 
		
		\centerline{
			\xymatrix@R=2pt{A \ar[r]^-{i\widehat{(-)}_A} & \Gamma(A^\sharp,u) && \Gamma(R,u)^\sharp \ar[r]^-{\tau_{R}} & (R,u)    }}
		are natural transformations. 
	\end{Teo}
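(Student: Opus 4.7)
The plan is to verify each naturality square separately. For any morphism $h \colon A \to B$ in $\mathcal{PMV}_f$, I need to check that $i\widehat{(-)}_B \circ h = \Gamma(h^\sharp) \circ i\widehat{(-)}_A$, and for any $\varphi \colon (R,u) \to (H,w)$ in $\mathcal{LR}_u$, that $\varphi \circ \tau_R = \tau_H \circ (\Gamma\varphi)^\sharp$. Both checks reduce to element chases, but the second requires unpacking the multi-step definition of $\tau_R$.

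For the first square I would perform a direct element chase. Given $a \in A$, one has $i\widehat{(-)}_A(a) = (0, \widehat{a}) \in A^\circ \subseteq \Gamma(A^\sharp, u)$, and by Definition \ref{hsharp}, together with the fact that $\Gamma(h^\sharp)$ is the restriction of $h^\sharp$ to the segment, $\Gamma(h^\sharp)(0,\widehat{a}) = (0, \widehat{h(a)})$. On the other hand, $i\widehat{(-)}_B(h(a)) = (0, \widehat{h(a)})$. Both paths yield the same element, so the square commutes, and this is exactly the outer square of the large commutative diagram displayed just after Definition \ref{hsharp}.

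For the second square the argument is subtler because $\tau_R$ is defined through the composite that factors across the subdirect representations of $R$ and of $\Gamma(R,u)$. I would reduce the problem to the generators of $\Gamma(R,u)^\sharp$ as an $l$-ring, which, by Theorem \ref{generado}, are the elements of $\Gamma(R,u)^\circ = \{(0,\widehat{x}) : x \in [0,u]\}$. Since $\tau_R$, $\tau_H$, $\varphi$ and $(\Gamma\varphi)^\sharp$ are all $l_u$-ring homomorphisms, it is enough to check commutativity on these generators. Unwinding the definition of $\tau_R$ shows that $\tau_R(0,\widehat{x}) = x$ for $x \in [0,u]$: at each $P \in Spec_g(R)$ the coordinate $(0, [x]_{P\cap[0,u]})$ transports via the inverse of $\Theta$ to $(0,[x]_P) \in \Gamma(R/P,u_P)^\sharp$, which $v$ sends to $[x]_P$; the resulting family is $\widehat{x}^g$, whose preimage under the injection $\widehat{(-)}^g$ is $x$. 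With this identification, chasing a generator gives $\varphi \tau_R(0,\widehat{x}) = \varphi(x)$, while $(\Gamma\varphi)^\sharp(0,\widehat{x}) = (0, \widehat{\varphi(x)})$ by Definition \ref{hsharp}, and the same evaluation argument applied to $H$ yields $\tau_H(0,\widehat{\varphi(x)}) = \varphi(x)$.

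The main obstacle is the bookkeeping between the prime spectrum of the $l_u$-ring $R$ and that of the $PMV_f$-algebra $\Gamma(R,u)$, which is mediated by Corollary \ref{l-L} and Theorem \ref{cociente-ideales}, and the verification that $\varphi$ sends the fibers of $\widehat{(-)}^g_R$ to the fibers of $\widehat{(-)}^g_H$ along the induced map on spectra. Once $\tau_R(0,\widehat{x}) = x$ is established and the generators are identified as above, the naturality of $\tau$ becomes a clean consequence of $\Gamma(\varphi)$ being the restriction of $\varphi$ to $[0,u]$, and the naturality of the first transformation is essentially an instance of the commutative diagram already displayed in the construction of $h^\sharp$.
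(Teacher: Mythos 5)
Your proposal is correct, but it takes a genuinely different route from the paper: the paper's entire proof is the single sentence ``It follows directly from theorem 3.3 of \cite{Yuri3}'', i.e.\ it delegates naturality to the already-established result for the underlying $MV$-algebra/$l_u$-group equivalence, implicitly using that $i\widehat{(-)}_A$ and $\tau_R$ are the \emph{same set-theoretic maps} as in the group case, so the commutativity of the naturality squares is literally the same statement and is unaffected by the added product structure. You instead give a self-contained verification: the first square by the element chase $\Gamma(h^\sharp)(0,\widehat{a})=(0,\widehat{h(a)})=i\widehat{(-)}_B(h(a))$ (which is indeed the outer square of the diagram following Definition \ref{hsharp}), and the second by reducing to the generators $(0,\widehat{x})$, $x\in[0,u]$, establishing the key identity $\tau_R(0,\widehat{x})=x$ from the factorization of $\tau_R$ through $v\Theta$ and the injectivity of $\widehat{(-)}^g$, and then invoking additivity together with Theorem \ref{generado} to extend to all of $\Gamma(R,u)^\sharp$. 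Both arguments are sound; the paper's citation is more economical but leaves the reader to supply exactly the bookkeeping you carry out (the compatibility of spectra via Corollary \ref{l-L} and Theorem \ref{cociente-ideales}), while your version makes the mechanism explicit and, in particular, records the useful fact that $\tau_R$ restricted to the canonical generators is simply evaluation back into $[0,u]$. One small point worth making explicit in your second square is that $(\Gamma\varphi)^\sharp(0,\widehat{x})=(0,\widehat{\varphi(x)})$ uses $\varphi(u)=w$ so that $\varphi(x)\in[0,w]$; with that noted, your argument is complete.
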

	\begin{proof}
		It follows directly from theorem 3.3 of \cite{Yuri3}.
	\end{proof}
	
	\section{$PMV_f$ vs $f$-rings}\label{pmv-anil}
	
	\begin{Def}($f$-rings [\cite{Birkhoff},XVII.5]). A function ring or $f$-ring is an $l$-ring that satisfies 
		\begin{center}	$a\wedge b=0$ and $c\geq 0$ implies $ac\wedge b=a\wedge cb=0.$
		\end{center}
	\end{Def}
	
	\begin{Pro}\label{prop-f-anillo} [\cite{Birkhoff},XVII.5].	In every $f$-ring it holds that 
		
		$$a\wedge b=0\implies ab=0.$$	
		
	\end{Pro}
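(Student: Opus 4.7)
The plan is to prove $ab = 0$ by applying the defining $f$-ring axiom twice, each time using one of the hypotheses $a, b \geq 0$ to shrink one side of the infimum until it collapses to $ab \wedge ab$.

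First I would observe that $a \wedge b = 0$ forces $0 \leq a$ and $0 \leq b$, since $0 = a \wedge b \leq a$ and $0 = a \wedge b \leq b$. In particular both $a$ and $b$ are eligible to play the role of the nonnegative element $c$ in the $f$-ring axiom.

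Next I would apply the axiom to the pair $(a,b)$ with $c = a \geq 0$. The axiom yields $a \wedge cb = 0$ (using commutativity of the product, established after Definition \ref{lu}), i.e.\ $a \wedge ab = 0$. This is the key reduction: we have replaced the original disjointness $a \wedge b = 0$ with a new disjointness in which $b$ has been multiplied by $a$.

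Finally I would apply the axiom a second time, this time to the pair $(a, ab)$ with $c = b \geq 0$. The conclusion $ac \wedge (ab) = 0$ reads $ab \wedge ab = 0$. Since $ab \wedge ab = ab$, this gives $ab = 0$, as required. The only real step is recognising that two successive invocations of the axiom — one multiplying $b$ by $a$, one multiplying $a$ by $b$ — produce an expression whose two sides collide, so no serious obstacle is expected; the result is purely syntactic once commutativity and the axiom are in hand.
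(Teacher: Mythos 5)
Your argument is correct: from $a\wedge b=0$ one gets $a,b\geq 0$, a first application of the $f$-ring axiom with $c=a$ gives $a\wedge ab=0$, and a second application to the disjoint pair $(a,ab)$ with $c=b$ gives $ab\wedge ab=ab=0$. The paper itself states this proposition without proof, citing Birkhoff XVII.5; your double invocation of the defining axiom is precisely the standard argument that the citation points to, so there is nothing to object to.
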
	
	
	\begin{Teo}\label{Fuchs} (Fuchs [\cite{Birkhoff},XVII.5]). An  $l$-ring is an $f$-ring if and only if  all its $l$-closed ideals are $L$-ideals.
	\end{Teo}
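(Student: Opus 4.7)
The plan is to handle both directions via polars, which are the natural bridge between the $\ell$-group and the ring structures on $R$. I will use the notation $b^{\perp} = \{x \in R : |x| \wedge |b| = 0\}$; in any $\ell$-ring this set is a convex $\ell$-subgroup (it is closed under differences and under the lattice operations), so it qualifies as an $\ell$-closed ideal.

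For the easier direction $(\Leftarrow)$, I would assume every $\ell$-closed ideal is an $L$-ideal and derive the $f$-ring axiom. Given $a \wedge b = 0$ and $c \geq 0$, the polar $b^{\perp}$ is an $\ell$-closed ideal containing $a$. By hypothesis $b^{\perp}$ is an $L$-ideal, hence absorbent, so $ca \in b^{\perp}$ and therefore $|ca| \wedge b = 0$. Since $a, c \geq 0$ in the $\ell$-ring we have $ca \geq 0$ and $|ca| = ca$, giving $ca \wedge b = 0$; the companion identity $a \wedge cb = 0$ follows by the same argument applied to $a^{\perp}$.

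For the converse $(\Rightarrow)$, I would assume $R$ is an $f$-ring and let $J$ be an $\ell$-closed ideal; the aim is to show $J$ is absorbent. Take $y \in J$ and $x \in R$. Since $J$ is $\ell$-closed, the positive and negative parts $y^{+}$ and $y^{-}$ lie in $J$, and writing $x = x^{+} - x^{-}$ it suffices by additivity of $J$ to handle each of the four cross products, reducing to the case $x, y \geq 0$ and the goal $xy \in J$. The main tool here is Proposition \ref{prop-f-anillo} together with its strengthening $y \wedge z = 0,\ x \geq 0 \Rightarrow xy \wedge z = 0$, which is exactly the defining $f$-ring axiom. This places $xy$ inside the double polar $\{y\}^{\perp\perp}$, and one then uses $\ell$-closure of $J$ to transfer $xy$ into $J$.

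The main obstacle is precisely this last transfer step. Membership of $xy$ in $\{y\}^{\perp\perp}$ is a polar relation, while $J$ may be $\ell$-closed without automatically containing the full double polar of each of its elements (this is where the statement becomes delicate outside the presence of a strong unit). The standard way around it is to invoke Birkhoff's subdirect representation of $f$-rings as products of totally ordered rings: verify the absorption fibrewise, where polars are either trivial or the whole ring, and lift back. This is essentially the content of Fuchs's argument in \cite{Birkhoff}, XVII.5, and is the step I would defer to the reference rather than reproduce in detail.
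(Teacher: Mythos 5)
The paper does not actually prove this theorem: it is quoted verbatim from Birkhoff (XVII.5) as a classical result of Fuchs, so there is no internal proof to measure you against. Judged on its own terms, your $(\Leftarrow)$ direction is complete and correct: polars $b^{\perp}=\{x: |x|\wedge|b|=0\}$ are $\ell$-closed $\ell$-ideals under any reasonable reading of ``closed'' (they are both order-closed and equal to their own double polar), $a\wedge b=0$ forces $a\geq 0$ and puts $a\in b^{\perp}$, and absorbency of $b^{\perp}$ then gives $ca\wedge b=0$ since $ca\geq 0$ in an $l$-ring.

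The gap is in $(\Rightarrow)$, and it is exactly the step you flag and then defer. But the obstacle you describe is not real: in Birkhoff's setting an $\ell$-closed ideal is one with $J=J^{\perp\perp}$ (equivalently, a polar), and the polar operation is antitone, so $\{y\}\subseteq J$ immediately gives $\{y\}^{\perp\perp}\subseteq J^{\perp\perp}=J$. Combined with your (correct) observation that the $f$-ring axiom places $xy$ in $\{y\}^{\perp\perp}$ for $x,y\geq 0$, and the reduction to positive parts (legitimate, since $y^{+},y^{-}\in J$ by convexity and $J$ is a subgroup), the forward direction closes in one line; no strong unit is needed. By contrast, the escape route you propose --- the Birkhoff--Pierce subdirect decomposition of $f$-rings into totally ordered rings --- both presupposes a theorem at least as deep as the one being proved and does not actually finish the job as sketched: knowing that the image of $J$ in each totally ordered fibre absorbs products does not return $xy$ to $J$ itself unless $J$ is the intersection of the corresponding kernels, which is again a closedness condition you would have to verify. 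Replace the last paragraph of your $(\Rightarrow)$ argument with the antitonicity observation and the proof is complete.
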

	\begin{Pro}
		For any $PMV_f$ $A,$ $A^\sharp$ is a semi-low $f_u$-ring.
	\end{Pro}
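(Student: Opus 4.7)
The plan is to assemble results already in hand rather than to verify the $f$-ring identity directly. By Affirmation \ref{anum}, $A^\sharp$ is a semi-low $l_u$-ring, so the only thing left to establish is the $f$-ring property itself. The cleanest route is to invoke Fuchs's criterion (Theorem \ref{Fuchs}): an $l$-ring is an $f$-ring precisely when all its $l$-closed ideals are $L$-ideals. The hypothesis needed to apply this criterion is supplied by Corollary \ref{l-L}, which asserts $Id_g(R) = Id(R)$ for every semi-low $l_u$-ring $R$. Specializing to $R = A^\sharp$ gives that every $l$-ideal (and hence every $l$-closed ideal) of $A^\sharp$ is automatically an $L$-ideal, and Fuchs's criterion then yields that $A^\sharp$ is an $f$-ring. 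Combined with Affirmation \ref{anum}, this is exactly the statement that $A^\sharp$ is a semi-low $f_u$-ring.

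As a cross-check, one can also argue via the subdirect representation developed in the preceding subsection. By construction, $A^\sharp$ embeds as an $l$-subring of $\prod_{P \in \mathrm{Spec}(A)} (A/P)^\sharp$, and each factor $(A/P)^\sharp$ is a chain semi-low $l_u$-ring by Theorem \ref{cnumeral}. In a chain $l$-ring the $f$-ring axiom is trivial: $a \wedge b = 0$ forces $a = 0$ or $b = 0$, so $ac \wedge b = 0$ holds automatically whenever $c \ge 0$. Since the $f$-ring property is a componentwise universal-Horn condition, it passes to products and restricts to any $l$-subring, in particular to $A^\sharp$.

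The only delicate point is purely terminological: one must identify the notion of \emph{$l$-closed ideal} appearing in Fuchs's statement with the notion of \emph{$l$-ideal} used in Corollary \ref{l-L}. In the framework adopted in the paper these coincide, so no genuine obstacle arises, and the proof reduces to the three citations Affirmation \ref{anum}, Corollary \ref{l-L}, and Theorem \ref{Fuchs}.
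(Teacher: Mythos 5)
Your main argument is exactly the paper's proof: Affirmation \ref{anum} for the semi-low $l_u$-ring structure, then Corollary \ref{l-L} plus Fuchs's criterion (Theorem \ref{Fuchs}) to conclude the $f$-ring property. The subdirect-product cross-check is a nice independent sanity check, but the primary route coincides with the paper's, including the (harmless) identification of $l$-closed ideals with $l$-ideals that the paper itself makes silently.
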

	\begin{proof}
		From affirmation \ref{anum}, $A^\sharp$ is a semi-low $l_u$-ring. From corollary \ref{l-L} and theorem \ref{Fuchs}, it is an $f$-ring, since all its $l$-ideals are $L$-ideals.
	\end{proof}
	\begin{Exp}
		$[0,1]^\sharp = \R.$
	\end{Exp}
	
	\begin{Pro}\label{dinoale}
		$PMV \subset MVW$-rig.
	\end{Pro}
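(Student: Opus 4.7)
The plan is to derive the three defining axioms of an $MVW$-rig from the two $PMV$ axioms, namely $a\odot b=0\Rightarrow ac\odot bc=0$ and $a\odot b=0\Rightarrow c(a\oplus b)=ca\oplus cb$. The central device throughout is the orthogonal decomposition $b=(b\wedge c)\oplus(b\ominus c)$ available in any $MV$-algebra, whose summands satisfy $(b\wedge c)\odot(b\ominus c)=0$ because $b\wedge c\leq c$ and $b\ominus c\leq\neg c$.

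First I would establish that $a\cdot 0=0$. Applying the two $PMV$ axioms to the pair $0,u$ (which satisfies $0\odot u=0$) yields simultaneously $cu=c0\oplus cu$ and $c0\odot cu=0$; since on a pair of $\odot$-orthogonal elements the operation $\oplus$ coincides with the sum in the underlying $l_u$-group, the first equation becomes $c0+cu=cu$ there, forcing $c0=0$. Next I would prove monotonicity of the product: given $x\leq y$, the orthogonal decomposition $y=(y\ominus x)\oplus x$ (orthogonal because $y\ominus x\leq\neg x$) together with $PMV$ axiom $ii)$ gives $ay=a(y\ominus x)\oplus ax\geq ax$, as $\oplus$-sums always dominate their summands.

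With these preparations, axiom $iii)$ of $MVW$-rig is immediate: the orthogonal decomposition of $b$ combined with both $PMV$ axioms gives $ab=a(b\wedge c)\oplus a(b\ominus c)$ with orthogonal summands, so $a(b\ominus c)=ab\ominus a(b\wedge c)$; monotonicity then yields $a(b\wedge c)\leq ac$, whence $ab\ominus ac\leq ab\ominus a(b\wedge c)=a(b\ominus c)$. For axiom $ii)$ I would apply the just-proven axiom $iii)$ with $b$ replaced by $b\oplus c$, obtaining $a(b\oplus c)\ominus ac\leq a\bigl((b\oplus c)\ominus c\bigr)$. The standard $MV$-identity $(b\oplus c)\ominus c\leq b$ (equivalent to the tautology $b\oplus c\leq c\oplus b$) combined with monotonicity bounds the right-hand side by $ab$; hence $a(b\oplus c)\ominus ac\leq ab$, i.e. $a(b\oplus c)\leq ab\oplus ac$.

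The principal obstacle is the opening bootstrap $a\cdot 0=0$, which is not formally trivial: the two $PMV$ axioms are conditional on a $\odot$-orthogonality hypothesis, and one must translate the resulting $MV$-algebra equation into the underlying $l_u$-group to use orthogonality additively. Once $a\cdot 0=0$ and monotonicity are in place, the remaining derivations are short manipulations with orthogonal decompositions and the calculus of $\oplus$ and $\ominus$. Strictness of the inclusion is already witnessed by example \ref{Z-n}, so nothing further is required for $PMV\subsetneq MVW\text{-rig}$.
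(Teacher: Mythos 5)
Your proof is correct, but it takes a genuinely different route from the paper. The paper's proof is a one-line appeal to representation theory: by Theorem 4.2 of Di Nola--Dvure\v{c}enskij, every $PMV$-algebra is (isomorphic to) $\Gamma(R,u)$ for some $l_u$-ring $R$ with $u^2\leq u$, and then Proposition \ref{Gamma} (exact distributivity in the ambient ring, truncated to the interval) yields the $MVW$-rig inequalities; strictness is witnessed by Observation \ref{nopmv}. You instead give a purely equational derivation inside the $MV$-algebra itself: the bootstrap $a\cdot 0=0$ from the orthogonal pair $0,u$, monotonicity from $y=(y\ominus x)\oplus x$, axiom $iii)$ from the orthogonal decomposition $b=(b\wedge c)\oplus(b\ominus c)$ together with both $PMV$ axioms, and axiom $ii)$ by specializing axiom $iii)$ to $b\oplus c$. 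All the individual steps check out: $(b\wedge c)\odot(b\ominus c)=0$ holds since $b\ominus c\leq \neg c$; the cancellation $z\ominus x=y$ for orthogonal $x\oplus y=z$ is the identity $(x\oplus y)\ominus x=y\wedge\neg x$ combined with $y\leq\neg x$; and the final residuation step $a(b\oplus c)\ominus ac\leq ab\Rightarrow a(b\oplus c)\leq ab\oplus ac$ is standard. (Your detour through the $l_u$-group for the bootstrap is avoidable --- from $cu=c0\oplus cu$ and $c0\odot cu=0$ one gets $0=cu\ominus cu=(c0\oplus cu)\ominus cu=c0\wedge\neg(cu)=c0$ entirely inside the $MV$-algebra --- but it is valid as stated.) What each approach buys: the paper's argument is short but imports a deep external theorem, whereas yours is self-contained, elementary, and makes visible exactly which $MVW$-rig inequalities follow from which $PMV$ quasi-identities; your strictness witness (Example \ref{Z-n}) differs from the one cited in the paper's proof of this proposition but is equally valid, and is in fact the one the paper uses in Theorem \ref{contenencias}.
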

	\begin{proof}
		From theorem 4.2 of \cite{DiNola2}, it follows that  for any $PMV$-algebra $A$ there exists an $l_u$-ring $R$ such that $\Gamma(R,u) \cong A$ and because of proposition \ref{Gamma}, $A$ is an $MVW$-rig. The inclusion is strict because of remark \ref{nopmv}. 
	\end{proof}
	
	\begin{Afir}
		For any set  $X$ the semi-low $f_u$-ring associated to the $PMV_f$-boolean algebra $2^X$ is isomorphic to the ring of bounded functions  of $\Z^X$, $B(\Z^X)$. 
	\end{Afir}
	\begin{proof}
		It is enough to see that $\left(2^X\right)^\sharp \cong B(\Z^X)$. The application $\Theta$ defined on the  generators, for all $f \in 2^X$, 
		$$\xymatrix@R=0.05cm{\left(2^X\right)^\sharp \ar[r]^-\Theta &  B(\Z^X) &\\
			i\widehat{f} \ar@{|->}[r] & \tilde{f} \colon X \ar[r] & \Z \\
			& \hspace{12pt} x \ar@{|->}[r] & f(x) }$$ with $\Theta\left(\sum_{j=1}^{k}i\widehat{f_j}\right) = \sum_{j=1}^{k}\Theta(i\widehat{f_j})$ \,\, and \,\,  $\Theta\left( (i\widehat{f})(i\widehat{g})\right) = \Theta( (i\widehat{f})) \Theta\left((i\widehat{g})\right),$ is a ring  isomorphism.  
		
		Because $2^X$ is a  hyper-archimedean $MV$-algebra, every prime ideal is maximal and of the form $P_x=\{f \in 2^X \colon f(x)=0 \}$, for all $x\in X$ and $[f]_{P_x} = f(x)$. Then, if $x \in X$, $\tilde{f}(x) \neq \tilde{g}(x) \Leftrightarrow f(x) \neq g(x) \Leftrightarrow f \neq g \Leftrightarrow [f]_{P_x} \neq [g]_{P_x} \Leftrightarrow \widehat{f} \neq \widehat{g}$, implies that $\Theta$ is well defined and injective, with   $P_x \in Spec(2^X)$.
		
		On the other hand, for $h \in B(\Z^X)$ it holds that $$h = \sum_{k=-n}^{n}k\lambda_k$$ with $|h|\leq n, \,\, \lambda_k \in 2^X$ such that $\lambda_k(x)= 1$ if $h(x)=k$ and zero elsewhere. Therefore $\Theta$ is surjective. By construction  $\Theta$ is a homomorphism of $l$-rings.
	\end{proof}
	\begin{Exp}
		The semi-low $f_u$-ring 	$(2^n)^\sharp$ is isomorphic to the ring  $\Z^n$.
	\end{Exp}
	\begin{Cor}
		Every boolean algebra seen as a $PMV_f$-algebra is  a subalgebra of $2^X$ for some set $X$. Since the functor $(-)^\sharp$ preserves subalgebras, the semi-low $f_u$-ring associated to a boolean algebra is a subring of the semi-low $f_u$-ring $B(\Z^X)$.
	\end{Cor}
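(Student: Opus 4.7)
The plan is to combine two ingredients: the standard subdirect embedding of a boolean algebra into a power of the two-element algebra, and the preservation of injections by the functor $(-)^\sharp$.

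First, I would apply Theorem \ref{repsubdirecta} to a boolean algebra $B$ viewed as a $PMV_f$-algebra with $x\cdot y = x\wedge y$ (as in Example \ref{Boole}). Every prime ideal $P$ of a boolean algebra is maximal, and the corresponding quotient $B/P$ is isomorphic to the two-element boolean algebra $2$; thus all the $PMV_f$-chain quotients appearing in the canonical subdirect representation are copies of $2$. This turns the subdirect embedding into an injective $PMV_f$-homomorphism $\iota\colon B\hookrightarrow \prod_{P\in Spec(B)} B/P \cong 2^{X}$ with $X = Spec(B)$.

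Second, I would verify that $(-)^\sharp$ sends injective $\mathcal{PMV}_f$-homomorphisms to injective morphisms in $\mathcal{LR}_u$. The cleanest route is to invoke the categorical equivalence just established: since $(-)^\sharp$ admits the quasi-inverse $\Gamma$, it preserves and reflects monomorphisms, and in the varieties $\mathcal{PMV}_f$ and $\mathcal{LR}_u$ monomorphisms coincide with injective homomorphisms. A direct verification is also available from Definition \ref{hsharp}: writing a generic element of $B^\sharp$ as $\sum \epsilon_i(0,\widehat{b_i})$ and evaluating at each $Q\in Spec(2^X)$, one reduces injectivity to the fiber-wise injectivity of Chang's construction applied to the induced injection $B/\iota^{-1}Q\hookrightarrow 2^X/Q$, which is standard since on chains $(-)^\sharp$ is part of the chain-level equivalence of Theorem \ref{Equivalencia-cadenas}.

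Third, applying $(-)^\sharp$ to $\iota$ yields an injective semi-low $l_u$-ring homomorphism $\iota^\sharp\colon B^\sharp \hookrightarrow (2^X)^\sharp$; composing with the isomorphism $(2^X)^\sharp \cong B(\Z^X)$ supplied by the preceding affirmation gives the desired subring inclusion $B^\sharp \hookrightarrow B(\Z^X)$.

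The main obstacle is the second step: although the existence of the quasi-inverse makes preservation of monomorphisms formal, one must justify carefully that injections in $\mathcal{PMV}_f$ and in $\mathcal{LR}_u$ really are the monomorphisms, and that the induced $l_u$-ring map is literally the one obtained by extending $\iota$ sum-by-sum as prescribed by Definition \ref{hsharp}. Once this is in hand, concatenating with the identification $(2^X)^\sharp \cong B(\Z^X)$ is routine.
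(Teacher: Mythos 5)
Your proposal is correct and takes essentially the paper's route: the corollary appears there with no written proof, resting on exactly your two ingredients --- the embedding of a boolean algebra $B$ into $2^{X}$ with $X=Spec(B)$ (every prime ideal being maximal with quotient $2$, via Theorem \ref{repsubdirecta}), plus the fact that the equivalence $(-)^\sharp$ carries injections to injections, composed with the identification $(2^X)^\sharp\cong B(\Z^X)$ from the preceding affirmation. One small correction to your second step: $\mathcal{LR}_u$ is not literally a variety (the strong-unit condition $\exists n\,|x|\leq nu$ is not equational), so ``monomorphisms coincide with injections'' there requires the supplementary argument you only gesture at --- for a mono $f$ in $\mathcal{LR}_u$, $\Gamma f$ is injective on $[0,u]$, the kernel of $f$ is an $l$-ideal meeting $[0,u]$ trivially, and $|z|\wedge u=0$ with $u$ a strong unit forces $|z|=0$, so Theorem \ref{generado}~b) yields injectivity of $f$ on all of the ring.
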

	
	\begin{Exp}
		$F[x] \subset \mathcal{C}\left([0,1]^{[0,1]}\right),$ the semi-low $f_u$-ring of continuous functions defined as follows:
		$$f \in F[x] \Leftrightarrow \exists P_1,\cdots, P_k \in \Z[x], \,\, \text{such that}\,\, \forall x \in [0,1] \, f(x)= P_i(x),$$ for some $1\leq i \leq k,$ is the semi-low $f_u$-ring associated to the  $PMV_f$-algebra $\Gamma(F[x])$. This algebra is the  minimum $PMV_f$-algebra that contain the $MV$-algebra $Free_1$.
	\end{Exp}

	\section{The category $\mathcal{PMV}_f$ is coextensive}\label{co-extensividad}
	A category $\mathcal{C}$ is coextensive if only if $\mathcal{C}^{op}$ is extensive. 
	
	\begin{Def}\cite{Car} A category with finite products is coextensive if only if the projections of product is the terminal object and for all $g \colon A\times B \to C$, the following pushout exists and $C\cong C_1\times C_2.$\\
		
		\centerline{
			\xymatrix@R=0.5cm@C=0.5cm{
				A\ar @{<-}[r]^-{\pi_A} \ar @{-->}[d]& A\times B\ar @{->}[r]^-{\pi_B}\ar [d]^g & B\ar @{-->}[d]\\  
				C_1\ar @{<-}[r]  &C\ar  @{->}[r]& C_2  \\
		}}

	\end{Def}
	\begin{Obs}
		The terminal object of $\mathcal{PMV}_f$ category, is the $PMV_f$-algebra  $\{0\} = \mathbf{1}$.          
	\end{Obs}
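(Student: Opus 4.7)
The plan is to verify the two defining conditions of a terminal object: that $\{0\}$ lies in $\mathcal{PMV}_f$, and that every $PMV_f$-algebra admits exactly one morphism to it. Both reduce to routine checks, but they need to be spelled out carefully since the trivial algebra is the degenerate case where $0 = u$.

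First I would confirm that $\mathbf{1} = \{0\}$, equipped with $0 \oplus 0 = 0$, $\neg 0 = 0$, and $0 \cdot 0 = 0$, is a $PMV_f$-algebra. The $MV$-algebra axioms from Definition \ref{MV} hold trivially since all expressions collapse to $0$. The $MVW$-rig axioms of Definition \ref{DefMVWrig} are likewise immediate. The two additional $PMV_f$ conditions $ab \leq a \wedge b$ and $a(b\ominus c) = ab \ominus ac$ from Definition \ref{PMVE} reduce to $0 \leq 0$ and $0 = 0$, so $\mathbf{1} \in \mathcal{PMV}_f$.

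Next I would show existence and uniqueness of a morphism $!_A \colon A \to \mathbf{1}$ for an arbitrary $A \in \mathcal{PMV}_f$. Uniqueness is immediate: any function from $A$ into the singleton $\{0\}$ must send every element to $0$, so at most one set-theoretic map exists, and hence at most one homomorphism. For existence I would define $!_A(a) := 0$ for all $a \in A$ and verify the four homomorphism clauses: $!_A(0) = 0$, $!_A(\neg x) = 0 = \neg 0 = \neg\, !_A(x)$, $!_A(x \oplus y) = 0 = !_A(x) \oplus !_A(y)$, and $!_A(xy) = 0 = !_A(x)\,!_A(y)$. Each identity holds because the target has exactly one element.

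There is essentially no obstacle beyond bookkeeping: the only point worth flagging is that in $\mathbf{1}$ one has $u = \neg 0 = 0$, so the strong unit coincides with zero; this is consistent with Affirmation \ref{orden} and causes no issue since every order-theoretic requirement is vacuous on a one-element chain. Combining the two items gives that $\mathbf{1}$ is terminal in $\mathcal{PMV}_f$.
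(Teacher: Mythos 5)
Your verification is correct and is the standard argument: the paper states this Observation without any proof, treating it as immediate, and your two steps (checking that $\{0\}$ satisfies all the $PMV_f$ axioms degenerately, and that the constant-zero map is the unique homomorphism into a singleton) are exactly the routine checks that justify it. Your remark that $u=\neg 0=0$ in the trivial algebra is a sensible point to flag, and it causes no conflict with any axiom.
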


	\begin{Pro}
		The category $\mathcal{PMV}_f$ is  coextensive.
	\end{Pro}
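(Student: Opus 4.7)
Given a morphism $g\colon A\times B\to C$ in $\mathcal{PMV}_f$, I would begin by setting $e_1=g(u_A,0)$ and $e_2=g(0,u_B)$. The elements $(u_A,0),(0,u_B)\in A\times B$ satisfy $(u_A,0)\oplus(0,u_B)=u_{A\times B}$, $(u_A,0)\odot(0,u_B)=0$ and $(u_A,0)\wedge(0,u_B)=0$, and since $g$ is in particular an $MV$-homomorphism these relations transport to $e_1\oplus e_2=u_C$, $e_1\odot e_2=0$, $e_1\wedge e_2=0$; both $e_i$ are boolean ($\oplus$-idempotent) and complementary in $C$.

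The central step is to upgrade the classical $MV$-algebra decomposition given by complementary boolean elements to a $PMV_f$-decomposition. Concretely, I would prove that the map
\begin{equation*}
\psi\colon C\longrightarrow [0,e_1]\times[0,e_2],\qquad c\longmapsto(c\wedge e_1,\,c\wedge e_2)
\end{equation*}
is an isomorphism of $PMV_f$-algebras, where each $[0,e_i]$ carries the $PMV_f$-structure inherited from $C$; this is legitimate because the axiom $xy\le x\wedge y$ forces each $[0,e_i]$ to be closed under multiplication. That $\psi$ is an $MV$-algebra isomorphism is standard for complementary booleans, so the real content is the multiplicative compatibility, namely $cd\wedge e_1=(c\wedge e_1)(d\wedge e_1)$ and symmetrically for $e_2$.

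To verify this, I would pass through the equivalence of Section~\ref{equivcat} to the associated semi-low $f_u$-ring $C^\sharp$. There, $e_1\oplus e_2=u_C$ combined with $e_1\odot e_2=0$ gives $e_1+e_2=u$ in the underlying $l$-group, and the absorption $e_1e_2\le e_1\wedge e_2=0$ gives $e_1e_2=0$ in the ring. For $c\in[0,u]$, the Riesz decomposition in the $l$-group plus $e_1\wedge e_2=0$ yields the unique splitting $c=(c\wedge e_1)+(c\wedge e_2)$. Multiplying two such splittings, the cross terms vanish because the semi-low inequality forces $(c\wedge e_1)(d\wedge e_2)\le e_1\wedge e_2=0$, so $cd=(c\wedge e_1)(d\wedge e_1)+(c\wedge e_2)(d\wedge e_2)$. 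Uniqueness of the Riesz decomposition of $cd$ then matches the summands, giving the desired identities.

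Finally, I would identify the pushouts with the factors. Let $I_B\subseteq C$ be the ideal generated by $\{g(0,b):b\in B\}$. Since every generator is bounded above by $e_2=g(0,u_B)$ and $e_2$ is boolean, $I_B=[0,e_2]$, which is a $PMV_f$-ideal by absorption. The inequality $g(a,b)\ominus g(a,0)\le g(0,b)$ shows that $C\to C/I_B$, together with $A\to C/I_B$, $a\mapsto[g(a,0)]$, satisfies the universal property of the pushout of $A\xleftarrow{\pi_A}A\times B\xrightarrow{g}C$, so $C_1=C/[0,e_2]$. Under $\psi$ this quotient is naturally $[0,e_1]$, and by symmetry $C_2\cong[0,e_2]$, hence $C\cong C_1\times C_2$ as required. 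The main obstacle is precisely the multiplicative compatibility of $\psi$: in the unitary setting $\mathcal{PMV}_1$ the booleans satisfy $e_1^2=e_1$ and the decomposition follows from ring idempotents in the familiar way, but in the non-unitary case $\mathcal{PMV}_f$ one only has $e_1^2\le e_1$, and the role of ring-idempotence must be played by the Riesz/semi-low argument sketched above.
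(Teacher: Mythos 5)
Your plan is correct, and it reaches the same decomposition as the paper --- both proofs split $C$ along the complementary boolean elements $g(1,0)$ and $g(0,1)$ --- but the two factors are realized differently. The paper takes $C_1=C/\langle e\rangle$ and $C_2=C/\langle\neg e\rangle$ with $e=g(0,1)$: multiplicativity of the comparison map $\theta$ then comes for free from Proposition \ref{A/I} (quotients by $PMV_f$-ideals are $PMV_f$-homomorphisms), and the real work is surjectivity, which the paper gets from the Chinese Remainder Theorem of \cite{Yuri2}. You instead take the segments $[0,e_1]$, $[0,e_2]$ with $\psi(c)=(c\wedge e_1,c\wedge e_2)$: there surjectivity is immediate ($c=x\oplus y$ hits $(x,y)$), and the real work is the multiplicative compatibility $cd\wedge e_1=(c\wedge e_1)(d\wedge e_1)$, which you settle by passing to $C^\sharp$ and using the Riesz decomposition plus the semi-low inequality to kill the cross terms $(c\wedge e_1)(d\wedge e_2)\leq e_1\wedge e_2=0$; this argument is sound, and it legitimately uses the equivalence of Section \ref{equivcat}, which is already established at this point. (A slightly cheaper route to the same identity is the subdirect representation of Theorem \ref{repsubdirecta}: in each chain quotient $C/P$ the boolean $e_1$ becomes $0$ or $u$, so the identity holds coordinatewise.) Your identification of the pushout legs is also a mild simplification: defining $q_A(a)=[g(a,0)]$ makes well-definedness automatic and reduces commutativity of the square to $g(a,b)\ominus g(a,0)=g(0,b)\in I_B$, whereas the paper defines $q_A(a)=q_e(g(a,b))$ and must argue independence of $b$. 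Finally, your closing observation is exactly the right diagnosis of why the argument cannot mimic the usual ring-idempotent splitting: in $\mathcal{PMV}_f$ a boolean element satisfies only $e^2\leq e$ (the zero product on a Boolean algebra already gives $e^2=0\neq e$), so the decomposition must be driven by the lattice-theoretic complementarity, which is what both your Riesz argument and the paper's ideal-theoretic argument do.
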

	
	\begin{proof}
		The pushout of projections  $\pi_A$ and $\pi_B$ of $PMV_f$-algebras $A,B$, is the terminal object because of for all $\lambda_A, \lambda_B$, $\lambda_A \pi_A = \lambda_B\pi_B$ implied $\lambda_A=\lambda_B=0$. It is enough to see that $(0,1)\in A\times B$ implies $\lambda_A\pi_A(0,1)=0=\lambda_B\pi_B(0,1)=1.$ 
		
		\centerline{
			\xymatrix@R=0.5cm@C=0.5cm{
				A\times B\ar @{->}[r]^-{\pi_B}\ar @{->}[d]_{\pi_A} & B\ar  @{->}[d]\ar @/^/[ddr]^{\lambda_B}\\  
				A\ar  @{->}[r]\ar  @/_/[drr]_{\lambda_A} & \mathbf{1} \ar@{.>}[dr]|-{\lambda}\\
				& & C 
		}}
		
		Let $g:A\times B\rightarrow C,$ an homomorphism the $PMV_f$-algebras, we named $g(0,1)=e,$ to idempotent element of $C$, and thus $g(1,0)=\neg e$ is idempotent too. We show that $\theta: C\rightarrow C/\left\langle e \right\rangle \times C/\left\langle\neg e\right\rangle; c\mapsto([c]_{\left\langle e\right\rangle},[c]_{\left\langle \neg e\right\rangle}),$ is an isomorphism of $PMV_f$-algebras, with $\left\langle e\right\rangle$ and $\left\langle \neg e\right\rangle$, the generated ideals of the subjacent $MV$-algebra of $C$. These ideals are ideals of the  $PMV_f$-algebra $C$, proposition \ref{espectros}.
		
		$\theta $ is well defined and is an homomorphism of $PMV_f$-algebras, proposition \ref{A/I}. It is injective because of exists  $c\in C$ such that, $\theta(c)=([c]_{\left\langle e\right\rangle},[c]_{\left\langle \neg e\right\rangle})=([0]_{\left\langle e\right\rangle},[0]_{\left\langle \neg e\right\rangle}),$ then $c\in \left\langle e\right\rangle$ and $c\in \left\langle \neg e\right\rangle,$ thus, $c\leq e$ and $c\leq \neg e$ because of $e$ and $\neg e$ are idempotent  elements, thus $c\leq e\wedge \neg e= g[(0,1)\wedge (1,0)] = 0.$

		The other hand, for all $x,y\in C,\, x\equiv y\,\, \text{mod} \left\langle\left\langle e \right\rangle,\left\langle \neg e \right\rangle\right\rangle,$ because of $\left\langle\left\langle e\right\rangle,\left\langle \neg e\right\rangle\right\rangle=C.$ From the Chinese Remainder Theorem, Lemma 2 \cite{Yuri2}, exists $c\in C$ such that $c\equiv x\,\, \text{mod} \left\langle e\right\rangle$ y $c\equiv y\,\, \text{mod} \left\langle \neg e\right\rangle,$ then $\theta(c)=([c]_{\left\langle e\right\rangle},[c]_{\left\langle \neg e\right\rangle})=([x]_{\left\langle e\right\rangle},[y]_{\left\langle \neg e\right\rangle}),$ for some $c\in C,$ thus $\theta$ is surjective.

		Now we defined $q_e= \pi_{ e} \theta$ con $\pi_e \colon   C/\left\langle e \right\rangle \times C/\left\langle\neg e\right\rangle \rightarrow C/\left\langle e \right\rangle $ and $q_A$ as follows: for all $a\in A$, $q_{A}(a)=q_e(g(a,b))$. $q_A$ is well defined because of $q_e(g(a,b))=q_e(g(a,b'))$. If $\theta(g(a,b))=([x]_{\left\langle e\right\rangle},[x]_{\left\langle \neg e\right\rangle})$ and $\theta(g(a,b'))=([y]_{\left\langle e\right\rangle},[y]_{\left\langle \neg e\right\rangle})$, it is enough to see that $[x]_{\left\langle e\right\rangle}=[y]_{\left\langle e\right\rangle}$.
		
		$\theta(g(0,b))=([z]_{\left\langle e\right\rangle},[z]_{\left\langle \neg e\right\rangle})$ with  $z\leq e.$ In effect, $([e]_{\left\langle e\right\rangle},[e]_{\left\langle \neg e\right\rangle})=\theta (g(0,1))=\theta(g(0,b\oplus \neg b))=\theta(g(0,b))\oplus \theta(g(0,\neg b))= ([z]_{\left\langle e\right\rangle},[z]_{\left\langle \neg e\right\rangle})\oplus ([w]_{\left\langle e\right\rangle},[w]_{\left\langle \neg e\right\rangle})=([z\oplus w]_{\left\langle e\right\rangle},[z\oplus w]_{\left\langle \neg e\right\rangle}),$ with $z,w \in C$. Thus, $z\leq z\oplus w\leq e.$
		
		Besides, $\theta(g(a,b))\ominus \theta(g(a,b'))=\theta(g(0,b\ominus b'))=([x\ominus y]_{\left\langle e\right\rangle},[x\ominus y]_{\left\langle \neg e\right\rangle}),$ and for previous affirmation, $x\ominus y\leq e,$ and $[x]_{\left\langle e\right\rangle}=[y]_{\left\langle  e\right\rangle}=q_A(a).$

		$q_A$ is an homomorphism of $PMV_f$-algebras by construction.
		
		Finally we show that the following diagrams are pushout. 
		
		\centerline{
			\xymatrix@R=0.5cm@C=0.5cm{
				A\ar @{<-}[r]^-{\pi_A} \ar @{-->}[d]^{q_A}& A\times B\ar @{->}[r]^-{\pi_B}\ar @{->}[d]^g & B\ar @{-->}[d]^{q_B}\\  
				C/\left\langle e\right\rangle\ar @{<-}[r]^-{q_e}  &C\ar  @{->}[r]^{q_{_{\neg e}}}& C/\left\langle \neg e\right\rangle
		}}
		
		Let $\lambda_A$ and $\lambda_g$ such that $\lambda_A \pi_A = \lambda_gg$, exists a unique $\lambda$ such that the following diagram is commutative, 
		
		\centerline{
			\xymatrix@R=0.5cm@C=0.5cm{
				& A\ar @{<-}[r]^-{\pi_A} \ar @{->}[d]^{q_A} \ar @/_/[dld]_{\lambda_A}& A\times B \ar @{->}[d]^g \\  
				&C/\left\langle e\right\rangle\ar @{<-}[r]^-{q_e} \ar @{.>}[dl]|-{\lambda} &C\ar @/^/[dll]^{\lambda_g} \\
				P	& &
		}} 
		

		We defined $\lambda([c]_{\left\langle e\right\rangle})=\lambda_{g}(c).$ $\lambda $ is well defined, because of $[c]_{\left\langle e\right\rangle}=[c']_{\left\langle e\right\rangle}\leftrightarrow c\ominus c'\leq e,$ and $\lambda_g(e)=\lambda_g(g(0,1))=\lambda_{A}\pi_{A}((0,1))=0.$ $\lambda q_e = \lambda_g$ by construction, and $\lambda q_A(a) = \lambda q_e(g(a,b)) = \lambda_g g(a,b) = \lambda_A \pi_A(a,b)= \lambda_A(a)$, with $b \in B.$	$\lambda$ is unique by construction.
		
		The similar form we show that $q_{\neg e }g = q_B \pi_B$, is a pushout.		
	\end{proof}
	
	\begin{Cor} The category $\mathcal{PMV}_1$ defined by  Montagna \cite{Montagna}, is coextensive.
	\end{Cor}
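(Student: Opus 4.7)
The plan is to transfer the proof of the preceding proposition verbatim, using the fact that $\mathcal{PMV}_1$ is a subvariety of $\mathcal{PMV}_f$ (Theorem \ref{contenencias}) with the same terminal object $\mathbf{1}=\{0\}$ and the same finite products, computed componentwise with unit $(u_A,u_B)$. Since $\mathcal{PMV}_1$ is equationally definable, it is closed under subalgebras, products, and quotients, so every construction appearing in the $\mathcal{PMV}_f$ proof stays within $\mathcal{PMV}_1$.

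First I would check that the pushout of the projections $\pi_A \colon A\times B \to A$ and $\pi_B \colon A\times B \to B$ is the terminal object in $\mathcal{PMV}_1$. The argument from the proof of the previous proposition uses only the values of $\lambda_A,\lambda_B$ at $(0,1)$ and $(1,0)$, and every $\mathcal{PMV}_1$-morphism is a fortiori a $\mathcal{PMV}_f$-morphism, so the same conclusion applies.

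Next, given a morphism $g \colon A\times B \to C$ in $\mathcal{PMV}_1$, I would reuse the complementary idempotents $e = g(0,1)$ and $\neg e = g(1,0)$ in $C$, together with the decomposition $\theta \colon C \to C/\langle e\rangle \times C/\langle \neg e\rangle$. The verification specific to $\mathcal{PMV}_1$ is that each quotient inherits the unitary structure: the MV-top $u_C$ descends to a multiplicative identity on $C/\langle e\rangle$ and $C/\langle \neg e\rangle$ because the equation $xu = x$ passes to quotients, so $C/\langle e\rangle, C/\langle \neg e\rangle \in \mathcal{PMV}_1$, and the quotient maps $q_e$, $q_{\neg e}$ preserve the unit. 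The same Chinese Remainder argument as in the $\mathcal{PMV}_f$ case shows that $\theta$ is an isomorphism of $PMV_1$-algebras.

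Finally, the pushout property of the resulting diagram is verified identically: given $\lambda_A$, $\lambda_g$ compatible with the cospan, the mediating arrow $\lambda([c]_{\langle e\rangle}) := \lambda_g(c)$ is a $\mathcal{PMV}_1$-homomorphism because $\lambda_g$ is, and uniqueness follows from surjectivity of $q_e$; an analogous argument handles $q_{\neg e} g = q_B \pi_B$. The only real point to worry about is that the ideals $\langle e\rangle, \langle \neg e\rangle$ and all the intermediate objects stay in $\mathcal{PMV}_1$; but this is immediate from $\mathcal{PMV}_1$ being a subvariety of $\mathcal{PMV}_f$. Thus the corollary reduces to observing that every step in the proof of the previous proposition is compatible with the extra unitary axiom.
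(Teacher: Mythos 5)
Your proposal is correct and matches the paper's (implicit) argument: the paper offers no separate proof, treating the corollary as immediate because $\mathcal{PMV}_1$ is an equationally definable subvariety of $\mathcal{PMV}_f$, so all the constructions in the proof of the preceding proposition (products, the quotients $C/\left\langle e\right\rangle$ and $C/\left\langle \neg e\right\rangle$, and the mediating morphisms) remain inside $\mathcal{PMV}_1$, and the universal properties restrict. Your additional checks — that the unit descends to quotients and that the pushout computed in $\mathcal{PMV}_f$ stays in and serves as the pushout in $\mathcal{PMV}_1$ — are exactly the points one would want made explicit.
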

	
	\section{Conclusions} 
	The construction of Dubuc-Poveda \cite{Yuri3} lets you visualize the associate ring of each $PMV_f$-algebra, because this do not use the \textit{good sequences}, and used the easy construction by Chang \cite{Chang2} for chains. The explicit construction of this equivalence permit us to study some properties of  commutative algebra for the class of semi-low $f_u$-rings, in relationship with $PMV_f$-algebras. We know about the problem to study the free algebras of  $f_u$-rings, however, its relationship with the $PMV_f$-algebras  will let to see this from the other perspective.

\end{document}